\newtheorem{theorem}{Theorem}
\newtheorem{definition}{Definition}
\newtheorem{lemma}{Lemma}
\newtheorem{remark}{Remark}
\newtheorem{problem}{Problem}
\newcommand{\be}{\begin{equation}}
\newcommand{\ee}{\end{equation}}
\newcommand{\ba}{\begin{align}}
\newcommand{\ea}{\end{align}}
\newcommand{\bea}{\begin{array}}
\newcommand{\eea}{\end{array}}
\def\EquationsBySection{\def\theequation
{\thesection.\arabic{equation}}%
\@addtoreset{equation}{section}}
\newcommand\old[1]{}
\newcommand{\pend}{\hfill \thicklines \framebox(6.6,6.6)[l]{}}
\renewenvironment{proof}{\noindent {\it  Proof.} \rm}{\pend}
\newtheorem{proposition}{Proposition}
\newtheorem{example}{Example}
\journal{}
\begin{document}

\begin{frontmatter}



\title{Anticipated backward stochastic Volterra integral equations
and their applications to nonzero-sum stochastic differential games}


\author[label1]{Bixuan Yang}
\ead{bixuanyang@126.com}
\author[label2]{Tiexin Guo \corref{cor1}}
\ead{tiexinguo@csu.edu.cn}
\cortext[cor1]{Corresponding author.}

\address[label1]{\small\it School of Mathematics and Statistics, Hunan First Normal University, Changsha 410205, Hunan, PR China}
\address[label2]{\small\it School of Mathematics and Statistics, Central South University, Changsha 410083, Hunan, PR China}

%

\begin{abstract}
In [J. Wen, Y. Shi, Stat. Probab. Lett. 156 (2020) 108599]
the authors first introduced a kind of anticipated backward stochastic
Volterra integral equations (anticipated BSVIEs, for short).
By virtue of the duality principle, it is found in this paper that the
anticipated BSVIEs can be applied to the study of stochastic differential games.
Naturally, in order to develop
the related theories and applications of BSVIEs, in this paper
we deeply investigate a more general class of anticipated BSVIEs whose generator includes both
pointwise and average time-advanced functions.
In theory, the well-posedness and the comparison theorem
of anticipated BSVIEs are established,
and some regularity results of
adapted M-solutions are proved by applying Malliavin calculus,
which cover the previous results for BSVIEs.
Further, using linear anticipated BSVIEs as the adjoint equation,
we present the maximum principle for the nonzero-sum differential
game system of stochastic delay
Volterra integral equations (SDVIEs, for short) for the first time.
As one of the applications of the principle,
a Nash equilibrium point of the linear-quadratic differential game problem of SDVIEs is obtained.
\end{abstract}

\begin{keyword}
Anticipated backward stochastic Volterra integral equation;
Regularity of adapted M-solutions; Malliavin calculus; Comparison theorem;
Nonzero-sum stochastic differential game; Maximum principle




\end{keyword}

\end{frontmatter}

\section{Introduction and motivation}\label{sec1}
It is well-known that the classical nonlinear backward stochastic differential
equation (BSDE, for short)
was initiated by Pardoux and Peng \cite{Pardoux1990} in $1990$, and
they completely solved the existence and
uniqueness of adapted solutions when the generator $g(\cdot)$ satisfies the Lipschitz condition.
Since then, the theory of BSDEs
has been widely applied in various fields such as
stochastic optimal control,
stochastic differential games, mathematical finance, engineering,
insurance and partial differential equations (PDEs, for short),
see e.g. \cite{Jaber2021,Buckdahn2017,Karoui1997,Liu2021,Moon2020,Tian2021,Wang2024a}.
Specifically, based on the regularity of adapted solutions
to BSDEs, the famous Feynman-Kac formula and
the related PDEs can be used for option pricing
in the financial markets (see \cite{Nualart2001}).
More details about the theory of BSDEs
can be referred to in \cite{Agram2022,Hu2024,Li2026,Yu2022}.

By virtue of the duality with the stochastic differential delay equations
(SDDEs, for short),
Peng and Yang \cite{Peng2009} further presented anticipated backward stochastic differential
equations (ABSDEs, for short) as follows:
\begin{align}\label{49}
\left\{\begin{array}{lcl} d Y(t) = - g\left( t, Y(t), Z(t), Y(t+\delta (t)),
Z(t+\zeta(t))  \right)dt + Z(t) dW(t),\ \ \
  t\in [0, T],\\
Y(t) = \varphi (t), \ \ t\in [T, T+K],\\
Z(t) = \eta (t), \ \ t\in [T, T+K].
\end{array}
\right.
\end{align}
The main feature of this type of ABSDE (\ref{49}) is that
the generator $g(\cdot)$ depends on both the present and the future values of the adapted solution $(Y(\cdot),Z(\cdot))\in L_{\mathbb{F}}^{2} (0,T+K; \mathbb{R}^n)\times L_{\mathbb{F}}^{2} (0,T+K; \mathbb{R}^{n\times m})$ with $\varphi (\cdot)\in L_{\mathbb{F}}^{2} (T,T+K; \mathbb{R}^n)$ and $\eta(\cdot)\in L_{\mathbb{F}}^{2} (T,T+K; \mathbb{R}^{n\times m})$.
Thanks to its profound theoretical significance and extensive
applications, the ABSDEs have attracted many scholars' interest.
Especially, as the adjoint equation of stochastic control systems
with both pointwise and average delays,
the ABSDEs with both pointwise and average time-advanced functions
are one of the most important types.
They have been successfully applied to mathematical finance,
such as the problem of optimal
consumption rates in a market with delay
(see \cite{Agram2013}).
Up to now, the works following this line can be found in
\cite{Chen2026,Meng2015,Meng2025,Wang2025,Wu2025}.
For this, this paper will introduce such typical ABSDEs with both pointwise and average
time-advanced functions
into the backward stochastic Volterra integral equations (BSVIEs, for short)
to enrich and develop the related theories and applications of BSVIEs.

The remarkable research on
BSVIEs began with the work of
Yong \cite{Yong2008}, and their forms are as follows:
\begin{equation*}
\begin{aligned}
Y(t)=\varphi(t)+\int_{t}^{T} g(t, s, Y(s), Z(t, s), Z(s, t))ds
-\int_{t}^{T} Z(t, s) dW(s), ~~t\in[0, T].
\end{aligned}
\end{equation*}
It is found that $\{\int_{t}^{T} Z(t, s) dW(s)$, $0\leq t\leq T\}$
is not a martingale so that It\^{o}'s formula does not hold for BSVIEs,
so the study of the theory of BSVIEs requires new tools.
Yong \cite{Yong2008} and Wang and Yong \cite{Wang2015} systematically
studied the properties of BSVIEs by introducing an adapted M-solution, and
successfully generalized the regularity of BSDEs to the BSVIE case via Malliavin calculus.
The theory of BSVIEs
has been applied in stochastic control theory \cite{Shi2013,Wang2022,Wang2023},
stochastic PDEs \cite{Wang2019} and risk measurement \cite{Miao2021,Wang2021b,Yang2024},
and thus also have attracted increasing attention from scholars.
Using Malliavin calculus, Wang \cite{Wang2021a} further discussed the Feynman-Kac formula for extended BSVIEs
through some regularity results.
Wen and Shi \cite{Wen2020} were devoted to a kind of anticipated BSVIEs (ABSVIEs, for short)
with form
\begin{align}\label{56}
\left\{\begin{array}{lcl} Y(t) = \varphi (t)+\int_{t}^{T} g\left(
t, s, Y(s), Z(t, s), Z(s, t), Y(s+\delta (s)),
Z(t, s+\zeta(s)), \right.\\
~~~~~~~~~\left. Z(s+\zeta(s), t) \right)ds-\int_{t}^{T} Z(t, s) dW(s),\ \ \
  t\in [0, T],\\
Y(t) = \varphi (t), \ \ t\in [T, T+K],\\
Z(t, s) = \eta(t, s), \ \ (t, s)\in [0, T+K]^2 \backslash [0, T]^2,
\end{array}
\right.
\end{align}
and presented the well-posedness
and the comparison theorem of them. Recently, Hamaguchi \cite{Hamaguchi2023}
mainly dealt with the maximum principle for infinite horizon optimal
control problem of stochastic delay Volterra integral equations
(SDVIEs, for short). Wang and Zheng \cite{Wang2024b}
investigated the singular BSVIE in infinite dimensional spaces.
One of the purposes of this paper is to extend the results of \cite{Wen2020} to
a more general ABSVIE.

As one of the most vigorous branches of optimal controls,
stochastic differential games have been a powerful tool for
modeling stochastic dynamic systems involving multiple decision
makers, and played a central role in biology, economics and finance.
For example, Savku and Weber \cite{Savku2022} studied the
optimal investment problems by
using stochastic game approaches
within the framework of behavioral finance.
The two major ways to solve stochastic differential game problems
are Bellman's dynamic programming method and Pontryagin's
maximum principle. Relevant theory of stochastic differential games
can be found in
\cite{Hamadene2020,Moon2025,Nie2025,Wang2018,
Wu2024, Yang2019}.
Since a BSVIE is not Markovian, it is necessary to apply Pontryagin's
maximum principle to deal with the corresponding nonzero-sum differential games.

As shown in Example \ref{ex1} of our paper, it is found that the equations
as in (\ref{56}) can be applied to the study of stochastic differential games. Whereas, in order to solve the nonzero-sum differential game problem of SDVIE (\ref{29}), it is necessary to
establish a more general duality principle.
For this, due to the wide applicability of our method of proofs,
and also to provide a rich theoretical basis for further research on the Feynman-Kac formula,
this paper introduces and
studies the properties of adapted M-solutions to general ABSVIEs as in (\ref{41}) below,
and extends the applications of BSVIEs
from optimal controls to nonzero-sum stochastic differential games.
More precisely, besides the application mentioned above, another topic of this paper is to consider the following class of ABSVIEs:
\begin{align}\label{41}
\left\{\begin{array}{lcl} Y(t) = \varphi (t)+\int_{t}^{T} g\left( \Lambda (t,s) \right)ds
 -\int_{t}^{T} Z(t, s) dW(s),\ \ \
  t\in [0, T],\\
Y(t) = \varphi (t), \ \ t\in [T, T+K],\\
Z(t, s) = \eta(t, s), \ \ (t, s)\in [0, T+K]^2 \backslash [0, T]^2,
\end{array}
\right.
\end{align}
where
\begin{align*}
\Lambda (t,s)&:= \left( t, s, Y(s), Z(t, s), Z(s, t), Y(s+\delta (s)),
Z(t, s+\zeta(s)), Z(s+\zeta(s), t),\right.\\
&~~~ \left. \int_{s}^{s+\delta (s)} e^{\lambda (s-\theta)} Y(\theta)d\theta,
\int_{s}^{s+\zeta (s)} e^{\lambda (s-\theta)} Z(t, \theta)d\theta,  \int_{s}^{s+\zeta (s)} e^{\lambda (s-\theta)} Z(\theta, t)d\theta  \right).
\end{align*}
Here, $Y(s+\delta (s))$, $Z(t, s+\zeta(s))$ and $Z(s+\zeta(s), t)$ are said to be
pointwise time-advanced.
$\int_{s}^{s+\delta (s)} e^{\lambda (s-\theta)} Y(\theta)d\theta$,
$\int_{s}^{s+\zeta (s)} e^{\lambda (s-\theta)} Z(t, \theta)d\theta$ and
$\int_{s}^{s+\zeta (s)} e^{\lambda (s-\theta)} Z(\theta, t)d\theta$
are said to be average time-advanced.
$\varphi (\cdot)$ and $\eta(\cdot, \cdot)$ are the terminal conditions of (\ref{41}),
and $g(\cdot)$ is the generator of (\ref{41}).

Although the work of this paper considerably benefits from the recent advance
in \cite{Wang2015,Wen2020,Yong2008},
the generator $g(\cdot)$ of ABSVIE (\ref{41}) contains both pointwise
time-advanced functions and average time-advanced functions, which makes the analysis more challenging.
Just motivated by the idea of \cite{Wang2015,Yong2008}, in this paper
we first establish the well-posedness and the comparison theorem
of ABSVIEs,
which cover the previous results for BSVIEs.
There is no need to assume $\beta>0$ as in \cite{Wen2020} in proving the well-posedness to ABSVIE (\ref{41})
so that the conditions of Theorem 3.2 in \cite{Wen2020} can be weakened.
Furthermore, based on the contractive map defined in Theorem \ref{th1}, we apply Malliavin calculus to the ABSVIEs and
directly prove their
differentiability that seems to be new to
our best knowledge. This means that
our result would prompt us to further consider the corresponding Feynman-Kac formula
in the near future.
Finally,
by virtue of the
duality principle, Pontryagin's type maximum principle for
nonzero-sum differential games of SDVIEs
is presented for the first time.
As an example, we get a Nash equilibrium point
for the linear-quadratic differential game problem of SDVIEs,
also see \cite{Guo2025} for the related study.
This result can make up for the lack of the applications of BSVIEs in
stochastic differential games.

The rest of this paper is organized as follows. In Section \ref{sec2}, we
introduce some preliminaries.
In Sections \ref{sec3}-\ref{sec4}, we are concerned with the well-posedness
and the comparison theorem of ABSVIEs, respectively.
Section \ref{sec5} is devoted to the study of regularity results for the adapted M-solution to ABSVIEs.
Finally, an application to the nonzero-sum differential game
driven by SDVIEs is presented in Section \ref{sec6}.

\section{Preliminaries}\label{sec2}
Throughout this paper,
let $(\Omega, \mathcal{F}, \mathbb{F},
\mathbb{P})$ be a complete filtered probability space
on which an $m$-dimensional standard Brownian motion $\{W(t); 0\leq t <\infty\}$ is defined
with $\mathbb{F}=\{\mathcal{F}_t\}_{t\geq 0}$ being its natural filtration
augmented by all the $\mathbb{P}$-null sets in $\mathcal{F}$.
Let $T>0$ be a finite time horizon
and $K\geq 0$ be a constant.
Denote by
\begin{equation*}
\begin{aligned}
\Delta=\{(t,s)\in[0, T]^2~|~t\leq s\}~~and~~\Delta^c=\{(t,s)\in[0, T]^2~|~t> s\}.
\end{aligned}
\end{equation*}
In addition, $\mathcal{B}(\mathbb{H})$ denotes the Borel $\sigma$-field of metric space $\mathbb{H}$,
$\mathbb{K}$ and $\widetilde{\mathbb{K}}$ denote the Euclidean spaces, and
for any $t\in[0,T+K]$, we employ the following notations:
\begin{equation*}
\begin{aligned}
&C^k (\mathbb{K};\widetilde{\mathbb{K}})=\left\{\chi: \mathbb{K}\mapsto\widetilde{\mathbb{K}}~|~
\chi (\cdot)~\mbox{is}~k\mbox{-th continuously differentable}\right\},\\
&C_b^k (\mathbb{K};\widetilde{\mathbb{K}})=\left\{\chi\in C^k (\mathbb{K};\widetilde{\mathbb{K}})~|~
\mbox{for any}~0\leq i \leq k,~\mbox{the}~i\mbox{-th order partial
derivative of}~\chi (\cdot)\right.\\
&~~~~~~~~~~~~~~~~\left.\mbox{is bounded}\right\},\\
&L^{2} (0,T+K; \mathbb{K})=\left\{\mu: [0,T+K]\mapsto\mathbb{K}~|~\mu(\cdot) ~\mbox{is}~\mathcal{B}([0,T+K])\mbox{-measurable},\right.\\
&\left.~~~~~~~~~~~~~~~~~~~~~~~~\int_{0}^{T+K}|\mu(t)|^2 dt<\infty\right\},
\end{aligned}
\end{equation*}
\begin{equation*}
\begin{aligned}
&L_{\mathcal{F}_t}^{2} (\Omega; \mathbb{K})=\left\{\xi: \Omega\mapsto\mathbb{K}~|~\xi~\mbox{is} ~\mathcal{F}_t\mbox{-measurable},
~\mathbb{E}\left[|\xi|^2\right]<\infty\right\},\\
&L_{\mathcal{F}_T}^{2} (0,T; \mathbb{K})=\left\{\mu: \Omega\times[0,T]\mapsto\mathbb{K}~|~\mu(\cdot) ~\mbox{is}~\mathcal{F}_{T}\otimes\mathcal{B}([0,T])\mbox{-measurable},\right.\\
&\left.~~~~~~~~~~~~~~~~~~~~\mathbb{E}\left[\int_{0}^{T}|\mu(t)|^2 dt\right]<\infty\right\},\\
&\hat{L}_{\mathcal{F}_{T+K}}^{2} (0,T+K; \mathbb{K})=\left\{\mu: \Omega\times[0,T+K]\mapsto\mathbb{K}~|~\mu(\cdot) ~\mbox{is}~\mathcal{F}_{T+K}\otimes\mathcal{B}([0,T+K])\mbox{-measurable},  \right.\\
&~~~~~~~~~~~~~~~~~~~~~~~~~~~~\mbox{and}~\mu(t)~\mbox{is}~\mathcal{F}_{T\vee t}\mbox{-measurable}~\mbox{for all}~t\in [0,T+K],\\
&\left.~~~~~~~~~~~~~~~~~~~~~~~~~~~~\mathbb{E}\left[\int_{0}^{T+K}|\mu(t)|^2 dt\right]<\infty\right\},\\
&\hat{L}_{\mathcal{F}_{T+K}}^{\infty} (0,T+K; \mathbb{K})=\left\{\mu: \Omega\times[0, T+K]\mapsto\mathbb{K}~|~\mu(\cdot) ~\mbox{is}~\mathcal{F}_{T+K}\otimes\mathcal{B}([0,T+K])\mbox{-measurable},
\right.\\
&~~~~~~~~~~~~~~~~~~~~~~~~~~~~
\mbox{and}~\mu(t)~\mbox{is}~\mathcal{F}_{T\vee t}\mbox{-measurable}~\mbox{for all}~t\in [0,T+K],\\
&\left.~~~~~~~~~~~~~~~~~~~~~~~~~~~\sup\limits_{t \in [0, T+K]} \mathbb{E}\left[|\mu(t)|^2\right] <\infty\right\},\\
&L_{\mathbb{F}}^{2} (0,T+K; \mathbb{K})=\left\{\mu (\cdot) \in \hat{L}_{\mathcal{F}_{T+K}}^{2} (0,T+K; \mathbb{K})~|~\mu(\cdot)~ \mbox{is}~\mathbb{F}\mbox{-adapted}\right\},\\
&L_{\mathbb{F}}^{2} (\Delta; \mathbb{K})=\left\{\gamma: \Omega\times\Delta\mapsto\mathbb{K}~|
~\gamma (\cdot,\cdot)~\mbox{is}~\mathcal{F}_{T}\otimes\mathcal{B}(\Delta)\mbox{-measurable, and for any}~t\in [0, T],  \right.\\
&~~~~~~~~~~~~~~~~ \left. \gamma(t,s)~ \mbox{is}~\mathcal{F}_{s}\mbox{-measurable for all}~s\in [t,T],~ \mathbb{E}\left[\int_{0}^{T}\int_{t}^{T}|\gamma(t,s)|^2 dsdt\right]<\infty\right\},\\
&L_{\mathbb{F}}^{2} (\Delta^c; \mathbb{K})=\left\{\gamma: \Omega\times\Delta^c \mapsto\mathbb{K}~|
~\gamma (\cdot,\cdot)~\mbox{is}~\mathcal{F}_{T}\otimes\mathcal{B}(\Delta^c)\mbox{-measurable, and for any}~t\in [0, T], \right.\\
&~~~~~~~~~~~~~~~~~\left.  \gamma(t,s)~ \mbox{is}~\mathcal{F}_s\mbox{-measurable for all}~s\in [0,t], ~\mathbb{E}\left[\int_{0}^{T}\int_{0}^{t}|\gamma(t,s)|^2 dsdt\right]<\infty\right\},\\
&L^2 (0,T+K; L_{\mathbb{F}}^2 (0,T+K; \mathbb{K}))=\left\{\gamma: \Omega\times[0,T+K]^2  \mapsto\mathbb{K}~|
~\gamma (\cdot,\cdot)~\mbox{is}~\mathcal{F}_{T+K}\otimes\mathcal{B}([0,T+K]^2)\mbox{-}\right.\\
&~~~~~~~~~~~~~~~~~~~~~~~~~~~~~~~~~~~~~~~~~~~
\left. \mbox{measurable, and for any}~t\in [0, T+K],~\gamma(t,\cdot)\in \right.\\
&~~~~~~~~~~~~~~~~~~~~~~~~~~~~~~~~~~~~~~~~~~~
\left. L_{\mathbb{F}}^2 (0,T+K; \mathbb{K}), ~\mathbb{E}\left[\int_{0}^{T+K}\int_{0}^{T+K}|\gamma(t,s)|^2 dsdt\right]<\infty\right\}.
\end{aligned}
\end{equation*}

Obviously, for any $0\leq G\leq H \leq T+K$ and $0\leq\delta,\delta_i\leq K~(i=1,2)$, $L_{\mathcal{F}_H}^{2} (G,H; \mathbb{K})$, $L_{\mathbb{F}}^{2} (G,H; \mathbb{K})$, $L_{\mathbb{F}}^{2} (-\delta,T; \mathbb{K})$, $L_{\mathbb{F}}^{2} (-\delta_i,0; \mathbb{K})$,
$L^2 (G,H; L_{\mathbb{F}}^2 (G,H; \mathbb{K}))$, $L^\infty(0,T; L_{\mathbb{F}}^{\infty} (0,T; \mathbb{K}))$
have similar definitions.

\section{Well-posedness of ABSVIEs}\label{sec3}
In this section, in order to prove the well-posedness of adapted M-solutions to ABSVIE (\ref{41}),
we first consider a special kind of BSVIEs as follows:
\begin{equation}\label{58}
\begin{aligned}
Y(t) = \varphi (t)+\int_{t}^{T} g( t, s)ds
-\int_{t}^{T} Z(t, s) dW(s),\ \
   t\in [0, T],
\end{aligned}
\end{equation}
where $\varphi (\cdot)\in L_{\mathcal{F}_T}^{2} (0,T; \mathbb{R}^n)$, and $g(\cdot,\cdot)\in L_{\mathbb{F}}^{2} (\Delta; \mathbb{R}^n)$.
Naturally, let us introduce the definition of adapted M-solutions to (\ref{58}).
\begin{definition}\label{def2}
A pair of processes $(Y(\cdot),Z(\cdot,\cdot))\in L_{\mathbb{F}}^{2} (0,T; \mathbb{R}^n)\times L^2 (0,T; L_{\mathbb{F}}^2 (0,T; \mathbb{R}^{n\times m}))$ is called an adapted M-solution of BSVIE (\ref{58}) if (\ref{58}) holds in the usual It\^{o}'s sense for almost everywhere all $t\in [0, T]$, and for any $A\in [0,T]$, the following relation holds:
\begin{equation*}
\begin{aligned}
Y(t)=\mathbb{E}[Y(t)|\mathcal{F}_A]+\int_A^t Z(t,s) dW(s),~~~a.e.~t\in [A,T].
\end{aligned}
\end{equation*}
\end{definition}
By virtue of Definition \ref{def2}, we prove the existence and uniqueness of the adapted M-solution to (\ref{58}).
\begin{lemma}\label{lem1}
For any given $\varphi (\cdot)\in L_{\mathcal{F}_T}^{2} (0,T; \mathbb{R}^n)$ and $g(\cdot,\cdot)\in L_{\mathbb{F}}^{2} (\Delta; \mathbb{R}^n)$,
there exists a unique adapted M-solution $(Y(\cdot),Z(\cdot,\cdot))\in L_{\mathbb{F}}^{2} (0,T; \mathbb{R}^n)\times L^2 (0,T; L_{\mathbb{F}}^2 (0,T; \mathbb{R}^{n\times m}))$ of (\ref{58}).
Moreover, there exists a constant $C>0$ such that
\begin{equation}\label{4}
\begin{aligned}
\mathbb{E}\left[\int_0^T  |Y (t)|^2 dt+\int_0^T \int_t^T |Z (t,s)|^2 dsdt\right]
\leq C \mathbb{E}\left[\int_0^{T} |\varphi (t)|^2 dt
+\int_0^T \left(\int_t^T  |g (t,s)| ds\right)^2 dt\right].
\end{aligned}
\end{equation}
For $i=1,2$, if $\varphi_i (\cdot)\in L_{\mathcal{F}_T}^{2} (0,T; \mathbb{R}^n)$ and
$g_i(\cdot,\cdot)\in L_{\mathbb{F}}^{2} (\Delta; \mathbb{R}^n)$, then the corresponding adapted M-solution $(Y_i (\cdot),Z_i (\cdot,\cdot))\in L_{\mathbb{F}}^{2} (0,T; \mathbb{R}^n)\times L^2 (0,T; L_{\mathbb{F}}^2 (0,T; \mathbb{R}^{n\times m}))$ of (\ref{58}) satisfies the following stability
estimate:
\begin{equation}\label{5}
\begin{aligned}
&~~~\mathbb{E}\left[\int_0^T  |Y_1 (t)-Y_2 (t)|^2 dt+\int_0^T \int_t^T |Z_1 (t,s)-Z_2 (t,s)|^2 dsdt\right] \\
&\leq C \mathbb{E}\left[\int_0^{T} |\varphi_1 (t)-\varphi_2 (t)|^2 dt
+\int_0^T \left(\int_t^T  \left| g_1 (t,s)-g_2 (t,s)\right| ds\right)^2 dt
 \right].
\end{aligned}
\end{equation}
\end{lemma}
\begin{proof}
For any fixed $t\in [0, T]$, we investigate the following BSDE:
\begin{equation}\label{50}
\begin{aligned}
\mathcal{Y}_t(r) = \varphi (t)+\int_{r}^{T} g(t, s)ds
-\int_{r}^{T} \mathcal{Z}_t (s) dW(s),\ \
   r\in [t, T].
\end{aligned}
\end{equation}
Applying Theorem 4.1 in \cite{Pardoux1990}, we can obtain that BSDE (\ref{50}) admits a unique adapted solution
$(\mathcal{Y}_t (\cdot), \mathcal{Z}_t (\cdot))\in L_{\mathbb{F}}^{2} (t,T; \mathbb{R}^n)\times L_{\mathbb{F}}^2 (t,T; \mathbb{R}^{n\times m})$, and the following estimate holds:
\begin{equation}\label{6}
\begin{aligned}
\mathbb{E}\left[ \int_t^T |\mathcal{Y}_t (s)|^2 ds+\int_t^T |\mathcal{Z}_t (s)|^2 ds\right]
\leq C \mathbb{E}\left[ |\varphi (t)|^2
+ \left(\int_t^T  |g (t,s)| ds\right)^2
 \right].
\end{aligned}
\end{equation}
For any fixed $t\in [0, T]$, since $\mathcal{Y}_t (r)\in L_{\mathcal{F}_r}^{2} (\Omega; \mathbb{R}^n)$ for all $r\in [t,T]$, by the martingale representation theorem, there exists a unique process $\mathcal{K}_t (\cdot)\in L_{\mathbb{F}}^2 (0,r;  \mathbb{R}^{n\times m})$ such that
\begin{equation*}
\begin{aligned}
\mathcal{Y}_t (r)=\mathbb{E}[\mathcal{Y}_t (r)]+\int_0^r \mathcal{K}_t (s) dW(s),~~t\in [0, T].
\end{aligned}
\end{equation*}
Set
\begin{equation*}
\begin{aligned}
Y(t):=\mathcal{Y}_t (t),~t\in [0,T],~~~~~
Z(t, s):=\left\{\begin{array}{lcl}
\mathcal{Z}_t (s),~~~(t,s)\in \Delta,\\
\mathcal{K}_t (s),~~~(t,s)\in \Delta^c.
\end{array}
\right.
\end{aligned}
\end{equation*}
Thus, this implies that
$\left(Y(\cdot), Z (\cdot,\cdot) \right)\in L_{\mathbb{F}}^{2} (0,T; \mathbb{R}^n)\times L^2 (0,T; L_{\mathbb{F}}^2 (0,T; \mathbb{R}^{n\times m}))$ is an adapted
M-solution to (\ref{58}).

Under (\ref{6}), one can deduce that
\begin{equation*}
\begin{aligned}
\mathbb{E}\left[|Y (t)|^2 +\int_t^T |Z (t, s)|^2 ds\right]
\leq C \mathbb{E}\left[ |\varphi (t)|^2
+ \left(\int_t^T  |g (t,s)| ds\right)^2
 \right].
\end{aligned}
\end{equation*}
Therefore, we have the following:
\begin{equation*}
\begin{aligned}
\mathbb{E}\left[\int_0^T  |Y (t)|^2 dt+\int_0^T \int_t^T |Z (t,s)|^2 dsdt\right]
\leq C \mathbb{E}\left[\int_0^{T} |\varphi (t)|^2 dt
+\int_0^T \left(\int_t^T  |g (t,s)| ds\right)^2 dt
 \right].
\end{aligned}
\end{equation*}
Moreover, from (\ref{4}), it is easy to prove the uniqueness of (\ref{58}).
Obviously, similar to the proof of (\ref{4}), we can get the stability estimate (\ref{5}).
\end{proof}

Next, let us consider the well-posedness of ABSVIE (\ref{41}), where
the pair of processes $(Y(\cdot), Z(\cdot,\cdot))$
satisfies the following equation:
\begin{align}\label{1}
\left\{\begin{array}{lcl} Y(t) = \varphi (t)+\int_{t}^{T} g\left(  \Lambda (t,s) \right)ds-\int_{t}^{T} Z(t, s) dW(s),\ \ \
  t\in [0, T],\\
Y(t) = \varphi (t), \ \ t\in [T, T+K],\\
Z(t, s) = \eta(t, s), \ \ (t, s)\in [0, T+K]^2 \backslash [0, T]^2,
\end{array}
\right.
\end{align}
where
\begin{equation*}
\begin{aligned}
\Lambda (t,s)&:= \left( t, s, Y(s), Z(t, s), Z(s, t), Y(s+\delta (s)),
Z(t, s+\zeta(s)), Z(s+\zeta(s), t),\right.\\
&~~~ \left. \int_{s}^{s+\delta (s)} e^{\lambda (s-\theta)} Y(\theta)d\theta,
\int_{s}^{s+\zeta (s)} e^{\lambda (s-\theta)} Z(t, \theta)d\theta,  \int_{s}^{s+\zeta (s)} e^{\lambda (s-\theta)} Z(\theta, t)d\theta  \right),
\end{aligned}
\end{equation*}
$T>0$ is a finite time horizon, $K\geq 0$ is a constant, $\lambda\in \mathbb{R}$ is the averaging parameter, and
$\delta(\cdot)$ and $\zeta(\cdot)$ are deterministic $\mathbb{R}^{+}$-valued continuous functions defined on $[0,T]$ such that:
\indent $(a)$ For any $s\in [0, T]$, we have
\begin{equation*}
\begin{aligned}
s+\delta (s)\leq T+K,~~~
s+\zeta (s)\leq T+K.
\end{aligned}
\end{equation*}
\indent $(b)$ There exists a constant $M\geq 0$ such that for any nonnegative and
Lebesgue integrable functions $f_1 (\cdot)$ and $f_2 (\cdot,\cdot)$, and for any $t\in [0, T]$, we have
\begin{equation*}
\begin{aligned}
&\int_t^T f_1 (s+\delta (s)) ds \leq M \int_t^{T+K} f_1 (s) ds,~~~~~~~~~
\int_t^T f_2 (t, s+\zeta (s)) ds \leq M \int_t^{T+K} f_2 (t, s) ds,\\
&\int_t^T f_2 (s+\zeta (s), t) ds \leq M \int_t^{T+K} f_2 (s, t) ds.
\end{aligned}
\end{equation*}
In the sequel, we impose the following assumptions to the generator $g$ of ABSVIE (\ref{1}):
\begin{itemize}
\item[(A1)] $g:\Omega\times\Delta\times (\mathbb{R}^n \times\mathbb{R}^{n\times m}\times\mathbb{R}^{n\times m})^3 \mapsto \mathbb{R}^n$
is $\mathcal{F}_T\otimes\mathcal{B}(\Delta\times (\mathbb{R}^n \times\mathbb{R}^{n\times m}\times\mathbb{R}^{n\times m})^3)$-measurable
such that $s\mapsto g(t,s,y,z,\xi,\alpha,\beta,\gamma,\mu,\nu, \psi)$ is $\mathbb{F}$-progressively measurable for all
$(t,y,z,\xi,\alpha,\beta,\gamma,\mu,\nu, \psi)\in [0,T]\times(\mathbb{R}^n \times\mathbb{R}^{n\times m}\times\mathbb{R}^{n\times m})^3$.	
For any $(t,s)\in \Delta$, $g(\cdot,t,s,\cdot,\cdot,\cdot,\cdot,\cdot,\cdot,\cdot,\cdot,\cdot)$ is a map from $\Omega\times \mathbb{R}^n \times\mathbb{R}^{n\times m}\times\mathbb{R}^{n\times m}
\times L_{\mathcal{F}_{r_1}}^2 (\Omega; \mathbb{R}^n)\times L_{\mathcal{F}_{r_2}}^2 (\Omega; \mathbb{R}^{n\times m})
\times L_{\mathcal{F}_{r_3}}^2 (\Omega; \mathbb{R}^{n\times m})
\times L_{\mathcal{F}_{r_4}}^2 (\Omega; \mathbb{R}^n)
\times L_{\mathcal{F}_{r_5}}^2 (\Omega; \mathbb{R}^{n\times m})\times L_{\mathcal{F}_{r_6}}^2 (\Omega; \mathbb{R}^{n\times m})$
to $L_{\mathcal{F}_{s}}^2 (\Omega;\mathbb{R}^n)$
defined by
\begin{equation*}
\begin{aligned}
&~~~~g(\cdot,t,s,\cdot,\cdot,\cdot,\cdot,\cdot,\cdot,\cdot,\cdot,\cdot)(\omega,y,z,\xi,
\alpha,\beta,\gamma,\mu,\nu, \psi)\\
&:=g(\omega,t,s,y,z,\xi,\alpha (\omega),\beta(\omega),\gamma(\omega),\mu(\omega),\nu(\omega), \psi(\omega)),\\
&\forall (\omega,y,z,\xi,
\alpha,\beta,\gamma,\mu,\nu, \psi)\in \Omega\times \mathbb{R}^n \times\mathbb{R}^{n\times m}\times\mathbb{R}^{n\times m}
\times L_{\mathcal{F}_{r_1}}^2 (\Omega; \mathbb{R}^n)\times L_{\mathcal{F}_{r_2}}^2 (\Omega; \mathbb{R}^{n\times m})
\\
&\times L_{\mathcal{F}_{r_3}}^2 (\Omega; \mathbb{R}^{n\times m})\times L_{\mathcal{F}_{r_4}}^2 (\Omega; \mathbb{R}^n)
\times L_{\mathcal{F}_{r_5}}^2 (\Omega; \mathbb{R}^{n\times m})\times L_{\mathcal{F}_{r_6}}^2 (\Omega; \mathbb{R}^{n\times m}),
\end{aligned}
\end{equation*}
where $r_i \in [s, T+K]$, $i=1,\cdots,6$.
Moreover,
\begin{equation}\label{46}
\begin{aligned}
\mathbb{E} \left[ \int_0^T \left(\int_t^T |g_0 (t,s)| ds \right)^2 dt \right]<\infty,
\end{aligned}
\end{equation}
where $g_0 (t,s):=g (t,s,0,0,0,0,0,0,0,0,0)$.
\item[(A2)]
There exists a constant $C\geq 0$ such that for any $(t,s)\in\Delta$, $y_1, y_2 \in \mathbb{R}^n$,
$z_1, z_2, \xi_1, \xi_2 \in \mathbb{R}^{n\times m}$, $\alpha_1 (\cdot), \alpha_2 (\cdot), \mu_1 (\cdot), \mu_2 (\cdot)\in L_{\mathbb{F}}^2 (s, T+K; \mathbb{R}^n)$, $\beta_1 (t, \cdot), \beta_2 (t, \cdot),$ $\gamma_1 (\cdot, t), \gamma_2 (\cdot, t), \nu_1 (t, \cdot)$, $\nu_2 (t, \cdot), \psi_1 (\cdot, t), \psi_2 (\cdot, t)\in L_{\mathbb{F}}^2 (s, T+K; \mathbb{R}^{n\times m})$, we have
\begin{align*}
&~~~|g(t,s,y_1,z_1,\xi_1,\alpha_1 (r_1),\beta_1 (t, r_2),\gamma_1 (r_3, t),\mu_1 (r_4),\nu_1 (t, r_5), \psi_1 (r_6, t))\\
&-g(t,s,y_2,z_2,\xi_2,\alpha_2 (r_1),\beta_2 (t, r_2),\gamma_2 (r_3, t),\mu_2 (r_4),\nu_2 (t, r_5), \psi_2 (r_6, t))|\\
&\leq C\left(|y_1-y_2|+|z_1-z_2|+|\xi_1-\xi_2|+\mathbb{E}\left[\left(|\alpha_1 (r_1)-\alpha_2 (r_1)|+|\beta_1 (t, r_2)-\beta_2 (t, r_2)|\right.\right.\right.\\
&~~~+|\gamma_1(r_3, t)-\gamma_2(r_3, t)|+|\mu_1(r_4)-\mu_2(r_4)|+|\nu_1 (t, r_5)-\nu_2 (t, r_5)|\\
&~~\left.\left.\left.+|\psi_1 (r_6, t)-\psi_2 (r_6, t)|\right)\big| \mathcal{F}_s \right]\right), a.s.
\end{align*}
\end{itemize}

\begin{definition}\label{def1}
A pair of processes $(Y(\cdot),Z(\cdot,\cdot))\in L_{\mathbb{F}}^{2} (0,T+K; \mathbb{R}^n)\times L^2 (0,T+K; L_{\mathbb{F}}^2 (0,T+K; \mathbb{R}^{n\times m}))$ is called an adapted M-solution of ABSVIE (\ref{1}) if (\ref{1}) is satisfied in the usual It\^{o}'s sense for almost everywhere all $t\in [0, T+K]$, and for any $R\in [0,T+K]$, the following relation holds:
\begin{equation}\label{2}
\begin{aligned}
Y(t)=\mathbb{E}[Y(t)|\mathcal{F}_R]+\int_R^t Z(t,s) dW(s),~~~a.e.~t\in [R,T+K].
\end{aligned}
\end{equation}
\end{definition}

For simplicity, denote by $\mathcal{H}^2 [0,T+K]$ the Banach space
\begin{equation*}
\begin{aligned}
\mathcal{H}^2 [0,T+K]=L_{\mathbb{F}}^{2} (0,T+K; \mathbb{R}^n)\times L^2 (0,T+K; L_{\mathbb{F}}^2 (0,T+K; \mathbb{R}^{n\times m}))
\end{aligned}
\end{equation*}
endowed with the norm
\begin{equation*}
\begin{aligned}
\|(Y(\cdot),Z(\cdot,\cdot))\|_{\mathcal{H}^2 [0,T+K]}^2:=
\mathbb{E}\left[\int_0^{T+K} |Y(t)|^2 dt+\int_0^{T+K} \int_0^{T+K}  |Z(t,s)|^2 dsdt\right].
\end{aligned}
\end{equation*}
Let $\mathcal{M}^2 [0,T+K]$ denote the family of all pairs $(Y(\cdot),Z(\cdot,\cdot))\in
\mathcal{H}^2 [0,T+K]$ such that (\ref{2}) holds.
For any $0\leq A<B \leq T+K$, the Banach space $\mathcal{M}^2 [A,B]$ can be defined similarly.

The well-posedness of the adapted M-solution to ABSVIE (\ref{1}) as the main result
of this section is presented as follows.

\begin{theorem}\label{th1}
Under Assumptions $(A1)$-$(A2)$, for any given $(\varphi (\cdot), \eta(\cdot, \cdot))\in \mathcal{M}^2 [0,T+K]$,
ABSVIE (\ref{1}) admits a unique adapted M-solution $(Y(\cdot), Z(\cdot,\cdot))\in \mathcal{M}^2 [0,T+K]$.
In addition, there exists a constant $C>0$ such that
\begin{equation}\label{7}
\begin{aligned}
&~~~\|(Y(\cdot),Z(\cdot,\cdot))\|_{\mathcal{M}^2 [0,T+K]}^2
:= \mathbb{E}\left[\int_0^{T+K} |Y(t)|^2 dt+\int_0^{T+K} \int_0^{T+K}  |Z(t,s)|^2 dsdt\right]\\
&\leq C \mathbb{E}\left[\int_0^{T+K} |\varphi(t)|^2 dt+\int_0^T \left(\int_t^T  |g_0 (t,s)| ds\right)^2 dt +\int_0^{T+K} \int_T^{T+K} |\eta (t,s)|^2 dsdt \right].
\end{aligned}
\end{equation}
For $i=1,2$, suppose that $g_i$ satisfies Assumptions $(A1)$-$(A2)$ and $(\varphi_i (\cdot), \eta_i (\cdot, \cdot))\in \mathcal{M}^2 [0,T+K]$.
Let $(Y_i (\cdot), Z_i (\cdot,\cdot))\in \mathcal{M}^2 [0,T+K]$ be the corresponding adapted M-solution of ABSVIE (\ref{1}) with respect
to $g_i$ and $(\varphi_i (\cdot), \eta_i (\cdot, \cdot))$, we have the following stability estimate:
\begin{equation}\label{8}
\begin{aligned}
&~~~\mathbb{E}\left[\int_0^{T+K} |Y_1 (t)-Y_2 (t)|^2 dt+\int_0^{T+K} \int_0^{T+K}  |Z_1 (t,s)-Z_2 (t,s)|^2 dsdt\right] \\
&\leq C \mathbb{E}\left[\int_0^{T+K} |\varphi_1 (t)-\varphi_2 (t)|^2 dt
+\int_0^{T+K} \int_T^{T+K}  |\eta_1 (t,s)-\eta_2 (t,s)|^2 dsdt \right.\\
&~~~\left.+\int_0^T \left(\int_t^T  | \Delta g (t,s)| ds\right)^2 dt\right],
\end{aligned}
\end{equation}
where
\begin{equation*}
\begin{aligned}
\Delta g(t,s)&=g_1 \left( t, s, Y_1(s), Z_1(t, s), Z_1(s, t), Y_1(s+\delta (s)),
Z_1(t, s+\zeta(s)), Z_1(s+\zeta(s), t),\right.\\
&~~~\left.\int_{s}^{s+\delta (s)} e^{\lambda (s-\theta)} Y_1(\theta)d\theta,
\int_{s}^{s+\zeta (s)} e^{\lambda (s-\theta)} Z_1(t, \theta)d\theta,  \int_{s}^{s+\zeta (s)} e^{\lambda (s-\theta)} Z_1(\theta, t)d\theta  \right)\\
&~~~-g_2 \left( t, s, Y_1(s), Z_1(t, s), Z_1(s, t), Y_1(s+\delta (s)),
Z_1(t, s+\zeta(s)), Z_1(s+\zeta(s), t),\right.\\
&~~~\left.\int_{s}^{s+\delta (s)} e^{\lambda (s-\theta)} Y_1(\theta)d\theta,
\int_{s}^{s+\zeta (s)} e^{\lambda (s-\theta)} Z_1(t, \theta)d\theta,  \int_{s}^{s+\zeta (s)} e^{\lambda (s-\theta)} Z_1(\theta, t)d\theta  \right).
\end{aligned}
\end{equation*}
\end{theorem}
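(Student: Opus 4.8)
I would follow the classical route for adapted M-solutions of BSVIEs (Yong, Wang--Yong), adapted to the anticipated setting, combining a fixed-point argument on a short interval $[S,T]$, solved with the help of Lemma \ref{lem1}, with a finite backward iteration. Fix $S=T-h$ with $h>0$ to be chosen, and let $\mathcal{N}$ be the closed subset of $\mathcal{M}^2[S,T+K]$ of pairs $(y(\cdot),z(\cdot,\cdot))$ with $y(\cdot)=\varphi(\cdot)$ on $[T,T+K]$ and $z(\cdot,\cdot)=\eta(\cdot,\cdot)$ off $[S,T]^2$. Given $(y,z)\in\mathcal{N}$, freeze every argument of $g$ except the three ``special'' ones depending on $Z(t,\cdot)$, i.e. put
\[
\hat g(t,s,z_1,z_2,\zeta'):=g\!\left(t,s,y(s),z_1,z(s,t),y(s{+}\delta(s)),z_2,z(s{+}\zeta(s),t),\textstyle\int_s^{s+\delta(s)}\!e^{\lambda(s-\theta)}y(\theta)d\theta,\,\zeta',\,\int_s^{s+\zeta(s)}\!e^{\lambda(s-\theta)}z(\theta,t)d\theta\right).
\]
Using (A1)--(A2) together with the structural assumptions (a), (b) --- which turn integrals of time-advanced quantities such as $\int_t^T|y(s+\delta(s))|\,ds$ into $\int_t^{T+K}|y(s)|\,ds$, finite since $y,z$ agree with the data past $T$ --- one checks that $\hat g$ satisfies the hypotheses of Lemma \ref{lem1} in the special form; note that for $(t,s)\in[S,T]^2$ every argument of $\hat g$ refers only to $y,z$ on $[S,T+K]$, so no value outside the current interval is invoked. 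Lemma \ref{lem1} then produces a unique $(Y,Z)\in\mathcal{N}$ solving (\ref{3}) with generator $\hat g$; set $\Phi(y,z):=(Y,Z)$.

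\textbf{Step 2 (contraction estimate).} For $(y^i,z^i)\in\mathcal{N}$, $i=1,2$, let $(Y^i,Z^i)=\Phi(y^i,z^i)$ and apply the stability estimate (\ref{5}); since the data are unchanged only the $\Delta\hat g$-term survives, and by (A2)
\[
|\Delta\hat g(t,s)|\le C\Big(|\Delta y(s)|+\mathbb{E}\big[\,|\Delta y(s{+}\delta(s))|+|\Delta z(s,t)|+|\Delta z(s{+}\zeta(s),t)|+|\textstyle\int_s^{s+\delta(s)}\!e^{\lambda(s-\theta)}\Delta y(\theta)d\theta|+|\int_s^{s+\zeta(s)}\!e^{\lambda(s-\theta)}\Delta z(\theta,t)d\theta|\,\big|\,\mathcal{F}_s\big]\Big),
\]
where $\Delta y=y^1-y^2$, $\Delta z=z^1-z^2$. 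Squaring $\int_t^T|\Delta\hat g(t,s)|\,ds$, taking expectation, integrating over $t\in[S,T]$, and using Cauchy--Schwarz (in the form $(\int_t^T(\cdot)\,ds)^2\le(T-t)\int_t^T(\cdot)^2\,ds$, with $T-t\le h$), Jensen's inequality and the tower property for the conditional expectations, assumption (b) for the pointwise-advanced terms, and Fubini together with $0\le\zeta\le K$ for the averaged terms, one finds that every resulting term carries at least one factor $(T-S)=h$; the $\Delta^c$-component of $\Delta Z$ is then controlled through the M-constraint (\ref{2}) by the $\Delta Y$-component. Hence $\|\Phi(y^1,z^1)-\Phi(y^2,z^2)\|_{\mathcal{M}^2[S,T+K]}^2\le C_0\,h\,\|(y^1,z^1)-(y^2,z^2)\|_{\mathcal{M}^2[S,T+K]}^2$ with $C_0$ depending only on $C,M,K,|\lambda|$. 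Choosing $h$ with $C_0h<1$, $\Phi$ is a contraction on the complete metric space $\mathcal{N}$, and its unique fixed point is the unique adapted M-solution of (\ref{1}) on $[S,T]$ (extended by the data on $[T,T+K]$).

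\textbf{Step 3 (iteration and global estimates).} Since $h$ does not depend on $S$, I would repeat the construction on $[T-2h,T-h]$, $[T-3h,T-2h]$, \dots; at each stage the values already constructed on the later intervals, together with $\varphi,\eta$, play the role of the data, and the M-constraint (\ref{2}) applied to the already-solved $Y(s)$ supplies the needed values $Z(s,t)$ with $t$ in an earlier interval. After $\lceil T/h\rceil$ steps this yields the unique adapted M-solution on $[0,T+K]$. Finally (\ref{7}) follows by applying the a priori bound (\ref{4}) on each of the finitely many subintervals and summing, using (a), (b) once more to absorb the time-advanced contributions coming from the shifted free terms; (\ref{8}) follows in the same way from (\ref{5}).

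\textbf{Main obstacle.} The essential difficulty, absent in the classical BSVIE case, lies in Step 2: one must verify that the frozen generator $\hat g$ still meets the integrability requirement (\ref{46}) and the Lipschitz condition demanded by Lemma \ref{lem1}, and --- decisively --- that the contraction estimate retains a genuine factor $h$ in front of \emph{every} term, including the pointwise- and averaged-time-advanced ones. This is precisely where (a), (b) and a careful Cauchy--Schwarz/Fubini bookkeeping (trading $\int_t^T$ of an advanced-time quantity for $\int_t^{T+K}$ of the original quantity, which then recombines into the $\mathcal{M}^2$-norm) are indispensable; everything else is routine adaptation of the Yong--Wang method.
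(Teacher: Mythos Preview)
Your Steps 1--2 are essentially the paper's Step 1: freeze the ``transposed'' arguments $Y(s)$, $Z(s,t)$, $Y(s+\delta(s))$, $Z(s+\zeta(s),t)$ and their averaged versions, invoke Lemma \ref{lem1}, and verify a contraction on a short interval using (b) and Cauchy--Schwarz to extract a factor $h$. This part is fine.

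The gap is in your Step 3. You write ``repeat the construction on $[T-2h,T-h]$'' and note that the M-constraint supplies $Z(s,t)$ for $s$ in the already-solved region and $t$ in the new one. But you do not address the \emph{other} off-diagonal block: for $t\in[T-2h,T-h]$ the equation reads $Y(t)=\varphi(t)+\int_t^T g(\Lambda(t,s))\,ds-\int_t^T Z(t,s)\,dW(s)$, so you also need $Z(t,s)$ for $s\in[T-h,T]$, which was \emph{not} determined in the previous step (there $t$ ranged only over $[T-h,T+K]$). This is not a triviality: one cannot simply ``repeat'' the contraction on $[T-2h,T-h]$ with terminal time $T-h$ without first producing a new terminal datum at $T-h$.

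The paper handles this explicitly by inserting, between two successive contraction steps, a \emph{stochastic Fredholm integral equation} (its equation (\ref{13})): for $t\in[G,H]$ one solves
\[
\varphi^H(t)=\varphi(t)+\int_H^T g^H\big(t,s,Z(t,s),Z(t,s+\zeta(s)),\textstyle\int_s^{s+\zeta(s)}e^{\lambda(s-\theta)}Z(t,\theta)d\theta\big)\,ds-\int_H^T Z(t,s)\,dW(s),
\]
where $g^H$ has all the ``transposed'' and $Y$-arguments evaluated at the already-known solution on $[H,T+K]$. This simultaneously determines $Z(t,s)$ on $[G,H]\times[H,T]$ and manufactures the $\mathcal{F}_H$-measurable terminal datum $\varphi^H(\cdot)$ for the genuine ABSVIE on $[G,H]$, which is then solved by the contraction of Step 1. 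Your outline omits this Fredholm step (or any equivalent device), so as written the iteration does not close. Once you add it, the rest of your argument and the derivation of (\ref{7})--(\ref{8}) by summing the local estimates goes through as you describe.
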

\begin{proof}
The proof is rather technical and thus we divide the whole proof into the following
four steps, as shown in Figure $1$.
In fact, for any given $\left(\varphi(\cdot), \eta (\cdot,\cdot) \right)\in \mathcal{M}^2 [0,T+K]$, we already know that $Y(t)=\varphi (t)$ for any $t\in [T, T+K]$, and $Z(t,s)=\eta (t,s)$ for any $(t,s)\in [0, T+K]^2 \backslash [0, T]^2$. Therefore,
we only need to prove that ABSVIE (\ref{1}) admits a unique adapted M-solution on $[0, T]$.
\begin{figure}
\centering
\includegraphics[height=5cm,width=5cm]{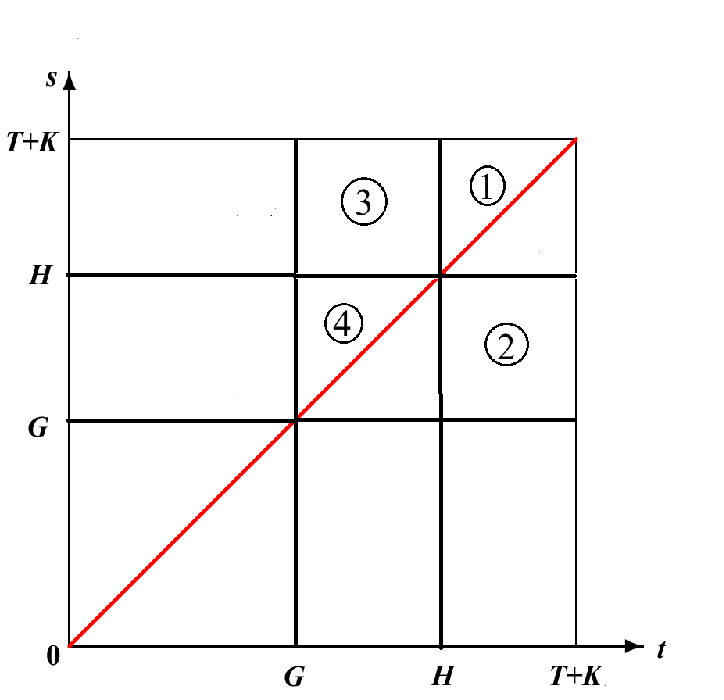}\\
{\normalsize{\textbf{Figure 1}~ The domain for $(Y(\cdot), Z(\cdot,\cdot))$}}
\label{fig.1}
\end{figure}

Step 1: We shall prove that ABSVIE (\ref{1}) admits a unique adapted M-solution
on $[H,T+K]$
for some $H\in [0,T)$. For this, we introduce an equivalent norm for
$\mathcal{M}^2 [H,T+K]$ as follows:
\begin{equation*}
\begin{aligned}
\|(Y(\cdot),Z(\cdot,\cdot))\|_{\mathcal{M}^2 [H,T+K]}^2:=
\mathbb{E}\left[\int_H^{T+K} |Y(t)|^2 dt+\int_H^{T+K} \int_t^{T+K}  |Z(t,s)|^2 dsdt\right].
\end{aligned}
\end{equation*}
For any given $(\varphi (\cdot), \eta(\cdot, \cdot))$,
$(y(\cdot),z(\cdot,\cdot))\in \mathcal{M}^2 [H,T+K]$,
we consider the following equation on $[H, T+K]$:
\begin{align}\label{9}
\left\{\begin{array}{lcl} Y(t) = \varphi (t)+\int_{t}^{T} g\left( t, s, y(s), z(t, s), z(s, t), y(s+\delta (s)),
z(t, s+\zeta(s)), z(s+\zeta(s), t), \right.\\
\ \ \ \ \ \ \  \  \  \left.  \int_{s}^{s+\delta (s)} e^{\lambda (s-\theta)} y(\theta)d\theta,
\int_{s}^{s+\zeta (s)} e^{\lambda (s-\theta)} z(t, \theta)d\theta,  \int_{s}^{s+\zeta (s)} e^{\lambda (s-\theta)} z(\theta, t)d\theta  \right)ds\\
\ \ \ \ \ \ \  \  \ -\int_{t}^{T} Z(t, s) dW(s),\ \ \
  t\in [H, T],\\
Y(t) = \varphi (t), \ \ t\in [T, T+K],\\
Z(t, s) = \eta(t, s), \ \ (t, s)\in [H, T+K]^2 \backslash [H, T]^2.
\end{array}
\right.
\end{align}
From Lemma \ref{lem1}, it is obvious to see that BSVIE (\ref{9}) admits a unique adapted M-solution $(Y(\cdot), Z(\cdot, \cdot))\in \mathcal{M}^2 [H,T]$. By the fact that $Y(t)=\varphi (t)$ for any $t\in [T, T+K]$, and $Z(t,s)=\eta (t,s)$ for any $(t,s)\in [H, T+K]^2 \backslash [H, T]^2$,
where $\left(\varphi(\cdot), \eta (\cdot,\cdot) \right)\in \mathcal{M}^2 [H,T+K]$, we know that $(Y(\cdot), Z(\cdot, \cdot))\in \mathcal{M}^2 [H,T+K]$ is the unique adapted
M-solution of (\ref{9}).
If $t\in[H,T]$, then it follows from (\ref{4}) that
\begin{equation*}
\begin{aligned}
&~~~\mathbb{E}\left[\int_H^{T} |Y(t)|^2 dt+\int_H^{T} \int_t^{T}  |Z(t,s)|^2 dsdt\right]\\
&\leq C \mathbb{E}\left[\int_H^{T} |\varphi (t)|^2 dt
+\int_H^T \left(\int_t^T  \left|g\left( t, s, y(s), z(t,s), z(s, t), y(s+\delta (s)),
z(t,s+\zeta(s)),  \right.\right.\right.\right.\\
&~~\left.\left.\left. \left. z(s+\zeta(s), t), \int_{s}^{s+\delta (s)} e^{\lambda (s-\theta)} y(\theta)d\theta,
\int_{s}^{s+\zeta (s)} e^{\lambda (s-\theta)} z(t,\theta)d\theta,  \int_{s}^{s+\zeta (s)} e^{\lambda (s-\theta)} z(\theta, t)d\theta  \right)\right| ds\right)^2 dt
 \right]\\
 &\leq C \mathbb{E}\left\{\int_H^{T} |\varphi (t)|^2 dt
+\int_H^T \left(\int_t^T |g_0 (t,s)| ds \right)^2 dt+\int_H^T \left[\int_t^T  \left( |y(s)|+|z(t,s)|+|z(s, t)| \right.\right.\right.\\
&~~~+\mathbb{E}[|y(s+\delta (s))| \big| \mathcal{F}_s]
+\mathbb{E}[|z(t,s+\zeta(s))| \big| \mathcal{F}_s] +\mathbb{E}[|z(s+\zeta(s), t)| \big| \mathcal{F}_s]\\
&~~~
+\mathbb{E}\left[\int_{s}^{s+\delta (s)} e^{\lambda (s-\theta)} |y(\theta)|d\theta \Bigg|\mathcal{F}_s\right]
+\mathbb{E}\left[\int_{s}^{s+\zeta (s)} e^{\lambda (s-\theta)} |z(t,\theta)|d\theta \Bigg|\mathcal{F}_s\right]\\
&~~\left.\left.\left.
+\mathbb{E}\left[\int_{s}^{s+\zeta (s)} e^{\lambda (s-\theta)} |z(\theta, t)|d\theta \Bigg|\mathcal{F}_s\right] \right)ds\right]^2 dt
\right\}
\end{aligned}
\end{equation*}
\begin{equation*}
\begin{aligned}
&\leq C \mathbb{E}\left\{\int_H^{T} |\varphi (t)|^2 dt
+\int_H^T \left(\int_t^T |g_0 (t,s)| ds \right)^2 dt+\int_H^T \int_t^T  \left[ |y(s)|^2+|z(t,s)|^2+|z(s, t)|^2 \right.\right.\\
&~~~+|y(s+\delta (s))|^2
+|z(t,s+\zeta(s))|^2 +|z(s+\zeta(s), t)|^2 +\left(\int_{s}^{s+\delta (s)} e^{\lambda (s-\theta)} |y(\theta)|d\theta \right)^2 \\
&~~~\left. \left.
+\left(\int_{s}^{s+\zeta (s)} e^{\lambda (s-\theta)} |z(t,\theta)|d\theta  \right)^2
+\left(\int_{s}^{s+\zeta (s)} e^{\lambda (s-\theta)} |z(\theta, t)|d\theta  \right)^2 \right]ds dt
\right\}\\
&\leq C \mathbb{E}\left\{\int_H^{T} |\varphi (t)|^2 dt
+\int_H^T \left(\int_t^T |g_0 (t,s)| ds \right)^2 dt+  \int_H^T |y(t)|^2 dt+\int_H^T \int_t^T |z(t, s)|^2 dsdt \right.\\
&~~~+\int_H^T \int_t^T |z(s, t)|^2 dsdt+M \int_H^{T+K}|y(t)|^2 dt
+M \int_H^{T+K} \int_t^{T+K} |z(t,s)|^2 dsdt\\
&~~~+M \int_H^{T+K} \int_t^{T+K} |z(s, t)|^2 dsdt+\frac{1-e^{-2\lambda(T+K)}}{2\lambda} (T-H)^2 \int_H^{T+K}|y(t)|^2 dt\\
&~~~+\frac{1-e^{-2\lambda(T+K)}}{2\lambda} (T-H) \int_H^{T+K} \int_t^{T+K} |z(t,s)|^2 dsdt\\
&~~~\left.+\frac{1-e^{-2\lambda(T+K)}}{2\lambda} (T-H) \int_H^{T+K} \int_t^{T+K} |z(s, t)|^2 dsdt\right\}\\
&\leq C \mathbb{E}\left\{\int_H^{T} |\varphi (t)|^2 dt
+\int_H^T \left(\int_t^T |g_0 (t,s)| ds \right)^2 dt+\int_H^{T+K}|y(t)|^2 dt dsdt  \right.\\
&~~~\left. +\int_H^{T+K} \int_t^{T+K} |z(t, s)|^2 dsdt
+\int_H^{T+K} \int_H^t |z(t, s)|^2 dsdt\right\}\\
&\leq C \mathbb{E}\left\{\int_H^{T} |\varphi (t)|^2 dt+\int_H^T \left(\int_t^T |g_0 (t,s)| ds \right)^2 dt
+\int_H^{T+K}|y(t)|^2 dt
 \right.\\
&~~~\left. +\int_H^{T+K} \int_t^{T+K} |z (t,s)|^2 dsdt\right\}.
\end{aligned}
\end{equation*}
Recall that $Y(t)=\varphi (t)$ for any $t\in [T, T+K]$, and $Z(t,s)=\eta (t,s)$ for any $(t,s)\in [H, T+K]^2 \backslash [H, T]^2$,
where $\left(\varphi(\cdot), \eta (\cdot,\cdot) \right)\in \mathcal{M}^2 [H,T+K]$, we derive that
\begin{equation}\label{61}
\begin{aligned}
&~~~\|(Y(\cdot),Z(\cdot,\cdot))\|_{\mathcal{M}^2 [H,T+K]}^2\\
&\leq C \left\{\|(y(\cdot),z(\cdot,\cdot))\|_{\mathcal{M}^2 [H,T+K]}^2+\mathbb{E}\left[\int_H^{T+K} |\varphi(t)|^2 dt
+\int_H^{T+K} \int_T^{T+K} |\eta (t,s)|^2 dsdt\right.\right.\\
&~~~\left.\left.+\int_H^T \left(\int_t^T |g_0 (t,s)| ds \right)^2 dt\right]\right\}<\infty.
\end{aligned}
\end{equation}
Thus, let us define a map $\Pi$ from $\mathcal{M}^2 [H,T+K]$ to itself by
\begin{equation}\label{47}
\begin{aligned}
\Pi (y(\cdot),z(\cdot,\cdot))=(Y(\cdot),Z(\cdot,\cdot)),~\mbox{for any}~(y(\cdot),z(\cdot,\cdot))\in \mathcal{M}^2 [H,T+K].
\end{aligned}
\end{equation}
Subsequently, we shall further check that $\Pi$ is a contractive map when $T-H>0$ is small enough.
For any $(y_i(\cdot), z_i(\cdot,\cdot))\in \mathcal{M}^2 [H,T+K]$, $i=1,2$,
we denote by $(Y_i(\cdot), Z_i(\cdot,\cdot))=\Pi(y_i(\cdot), z_i(\cdot,\cdot))$. Let
\begin{align*}
(\Delta y(\cdot), \Delta z(\cdot,\cdot))&=(y_1(\cdot)-y_2(\cdot), z_1(\cdot,\cdot)-z_2(\cdot,\cdot)),\\
(\Delta Y(\cdot), \Delta Z(\cdot,\cdot))&=(Y_1(\cdot)-Y_2(\cdot), Z_1(\cdot,\cdot)-Z_2(\cdot,\cdot)).
\end{align*}
According to the stability estimate (\ref{5}) and the Lipschitz conditions of $g$, we can show that
\begin{equation*}
\begin{aligned}
&~~~~\mathbb{E}\left[\int_H^T |\Delta Y(t)|^2 dt+\int_H^T \int_t^T |\Delta Z(t,s)|^2 dsdt\right]\\
&\leq C \mathbb{E}\left[\int_H^T \left(\int_t^T
\left|g\left(t, s, y_1(s), z_1(t, s), z_1(s, t), y_1(s+\delta (s)),
z_1(t, s+\zeta(s)), z_1(s+\zeta(s), t), \right.\right.\right.\right.\\
& ~~~\left.  \int_{s}^{s+\delta (s)} e^{\lambda (s-\theta)} y_1(\theta)d\theta,
\int_{s}^{s+\zeta (s)} e^{\lambda (s-\theta)} z_1(t, \theta)d\theta,  \int_{s}^{s+\zeta (s)} e^{\lambda (s-\theta)} z_1(\theta, t)d\theta\right)\\
&~~~-g\left(t, s, y_2(s), z_2(t, s), z_2(s, t), y_2(s+\delta (s)),
z_2(t, s+\zeta(s)), z_2(s+\zeta(s), t), \right.\\
&~~~ \left.\left.\left.\left.  \int_{s}^{s+\delta (s)} e^{\lambda (s-\theta)} y_2(\theta)d\theta,
\int_{s}^{s+\zeta (s)} e^{\lambda (s-\theta)} z_2(t, \theta)d\theta,  \int_{s}^{s+\zeta (s)} e^{\lambda (s-\theta)} z_2(\theta, t)d\theta\right)\right| ds\right)^2 dt\right]\\
&\leq C \mathbb{E}\left\{
\int_H^T \left(\int_t^T |\Delta y(s)|ds \right)^2 dt+\int_H^T \left(\int_t^T |\Delta z(t, s)|ds\right)^2 dt
+\int_H^T \left(\int_t^T |\Delta z(s, t)|ds\right)^2 dt
\right.\\
&~~~+\int_H^T \left(\int_t^T|\Delta y(s+\delta (s))|ds \right)^2 dt
+\int_H^T \left(\int_t^T |\Delta z(t,s+\zeta(s))|ds \right)^2 dt\\
&~~~+\int_H^T \left(\int_t^T |\Delta z(s+\zeta(s), t)|ds \right)^2 dt
+\int_H^T \left(\int_t^T \left|\int_{s}^{s+\delta (s)} e^{\lambda (s-\theta)} \Delta y(\theta) d\theta \right|ds\right)^2 dt\\
&~~~\left.+\int_H^T \left(\int_t^T \left|\int_{s}^{s+\zeta (s)} e^{\lambda (s-\theta)} \Delta z(t,\theta) d\theta  \right|ds\right)^2 dt
+\int_H^T \left(\int_t^T \left|\int_{s}^{s+\zeta (s)} e^{\lambda (s-\theta)} \Delta z(\theta, t) d\theta  \right|ds\right)^2 dt
\right\}\\
&\leq C(T-H) \mathbb{E}\left\{
\int_H^T  |\Delta y(t)|^2 dt+\int_H^T \int_t^T |\Delta z(t, s)|^2 ds dt
+\int_H^T \int_t^T |\Delta z(s, t)|^2 ds dt
\right.\\
&~~~+\int_H^T \int_t^T|\Delta y(s+\delta (s))|^2 ds  dt+\int_H^T \int_t^T |\Delta z(t,s+\zeta(s))|^2 ds  dt
+\int_H^T \int_t^T |\Delta z(s+\zeta(s), t)|^2 ds  dt\\
&~~~+\int_H^T \int_t^T \left|\int_{s}^{s+\delta (s)} e^{\lambda (s-\theta)} \Delta y(\theta) d\theta \right|^2 ds dt
+\int_H^T \int_t^T \left|\int_{s}^{s+\zeta (s)} e^{\lambda (s-\theta)} \Delta z(t,\theta) d\theta \right|^2 ds dt\\
&~~~\left.
+\int_H^T \int_t^T \left|\int_{s}^{s+\zeta (s)} e^{\lambda (s-\theta)} \Delta z(\theta, t) d\theta \right|^2 ds dt
\right\}
\end{aligned}
\end{equation*}
\begin{equation*}
\begin{aligned}
&\leq C(T-H) \mathbb{E}\left\{
\int_H^T  |\Delta y(t)|^2 dt+\int_H^T \int_t^T |\Delta z(t,s)|^2 ds  dt
+\int_H^T \int_H^t |\Delta z(t, s)|^2 ds dt
\right.\\
&~~~+M(T-H)\int_H^{T+K} |\Delta y(t)|^2 dt
+M\int_H^{T+K} \int_t^{T+K} |\Delta z(t,s)|^2 ds  dt
+M\int_H^{T+K} \int_H^t |\Delta z(t, s)|^2 ds  dt
\\
&~~~+\int_H^T \int_t^T \left(\int_{s}^{s+\delta (s)} e^{2\lambda (s-\theta)}d\theta \int_{s}^{s+\delta (s)}  |\Delta y(\theta)|^2 d\theta \right) ds dt\\
&~~~+\int_H^T \int_t^T \left(\int_{s}^{s+\zeta (s)} e^{2\lambda (s-\theta)}d\theta \int_{s}^{s+\zeta (s)}  |\Delta z(t,\theta)|^2 d\theta \right) ds dt\\
&~~~\left.+\int_H^T \int_t^T \left(\int_{s}^{s+\zeta (s)} e^{2\lambda (s-\theta)}d\theta \int_{s}^{s+\zeta (s)}  |\Delta z(\theta, t)|^2 d\theta \right) ds dt
\right\}\\
&\leq C(T-H)\mathbb{E} \left\{\left[ 2+M+\left(M+\frac{1-e^{-2\lambda(T+K)}}{2\lambda}\right)(T-H)+\frac{1-e^{-2\lambda(T+K)}}{2\lambda} (T-H)^2 \right]
\right.\\
&~~~\left. \cdot\int_H^{T+K}  |\Delta y(t)|^2 dt+
\left[1+M+\frac{1-e^{-2\lambda(T+K)}}{2\lambda} (T-H)\right] \int_H^{T+K} \int_t^{T+K}  |\Delta z(t,s)|^2 ds dt
\right\}\\
&\leq CI(T-H)\mathbb{E}\left[\int_H^{T+K}  |\Delta y(t)|^2 dt+\int_H^{T+K}\int_t^{T+K} |\Delta z(t,s)|^2 dsdt \right],
\end{aligned}
\end{equation*}
where
\begin{equation*}
\begin{aligned}
I=2+M+\left(M+\frac{1-e^{-2\lambda(T+K)}}{2\lambda}\right)(T-H)+\frac{1-e^{-2\lambda(T+K)}}{2\lambda} (T-H)^2.
\end{aligned}
\end{equation*}
By the fact that $\Delta Y(t)=0$ for any $t\in [T, T+K]$, and $\Delta Z(t,s)=0$ for any $(t,s)\in [H, T+K]^2 \backslash [H, T]^2$, it is easy to see that
\begin{equation*}
\begin{aligned}
&~~~~\mathbb{E}\left[\int_H^{T+K} |\Delta Y(t)|^2 dt+\int_H^{T+K} \int_t^{T+K} |\Delta Z(t,s)|^2 dsdt\right]\\
&\leq CI(T-H)\mathbb{E}\left[\int_H^{T+K}  |\Delta y(t)|^2 dt
+\int_H^{T+K}\int_t^{T+K} |\Delta z(t,s)|^2 dsdt\right].
\end{aligned}
\end{equation*}
Therefore, this implies that
\begin{equation*}
\begin{aligned}
&~~~\left\|\Pi(y_1(\cdot), z_1(\cdot,\cdot))-\Pi(y_2(\cdot), z_2(\cdot,\cdot))\right\|_{\mathcal{M}^2 [H,T+K]}^2\\
&\leq CI(T-H)\left\|(y_1(\cdot), z_1(\cdot,\cdot))-(y_2(\cdot), z_2(\cdot,\cdot))\right\|_{\mathcal{M}^2 [H,T+K]}^2.
\end{aligned}
\end{equation*}
If $T-H>0$ is small enough such that $CI(T-H)<1$, then one can see that $\Pi$ is a contractive map on the Banach space $\mathcal{M}^2 [H,T+K]$.
Thus, from Banach contraction mapping principle, the map $\Pi$ admits a unique fixed point $(Y(\cdot), Z(\cdot,\cdot))$ such that $\Pi(Y(\cdot), Z(\cdot,\cdot))=(Y(\cdot), Z(\cdot,\cdot))$. It immediately follows that $(Y(\cdot), Z(\cdot,\cdot))\in \mathcal{M}^2 [H,T+K]$ is the unique adapted M-solution of ABSVIE (\ref{1}) on $[H,T+K]$.
By (\ref{61}), we have the following estimate:
\begin{equation}\label{10}
\begin{aligned}
&~~~\|(Y(\cdot),Z(\cdot,\cdot))\|_{\mathcal{M}^2 [H,T+K]}^2
:=\mathbb{E}\left[\int_H^{T+K} |Y(t)|^2 dt+\int_H^{T+K} \int_t^{T+K}  |Z(t,s)|^2 dsdt\right] \\
&\leq C \mathbb{E}\left[\int_H^{T+K} |\varphi(t)|^2 dt+\int_H^T \left(\int_t^T  |g_0 (t,s)| ds\right)^2 dt +\int_H^{T+K} \int_T^{T+K} |\eta (t,s)|^2 dsdt \right].
\end{aligned}
\end{equation}

Step 2: Let $G \in [0, H)$ be a constant to be determined later. Next, we continue to confirm the values of $Z(t,s)$ for $(t,s)\in [H, T+K]\times [G, H]$. From Step 1, we know that $\mathbb{E}[Y(t)|\mathcal{F}_H]\in L_{\mathcal{F}_H}^{2} (\Omega; \mathbb{R}^n)$ for each $t\in [H,T]$.
Then by the martingale representation theorem, there exists a unique process $Z(t,\cdot)\in L_{\mathbb{F}}^2 (G,H;  \mathbb{R}^{n\times m})$ such that
\begin{equation*}
\begin{aligned}
\mathbb{E}\left[Y(t)|\mathcal{F}_H\right]=\mathbb{E}\left[Y(t)|\mathcal{F}_G\right]+\int_G^H Z(t,s) dW(s),~~t\in[H,T].
\end{aligned}
\end{equation*}
Since $Y(t)=\varphi (t)$ for any $t\in [T, T+K]$, and $Z(t,s)=\eta (t,s)$ for any $(t,s)\in [T, T+K] \times [G, H]$, where $\left(\varphi(\cdot), \eta (\cdot,\cdot) \right)\in \mathcal{M}^2 [G,T+K]$, we have the following:
\begin{equation*}
\begin{aligned}
\mathbb{E}\left[Y(t)|\mathcal{F}_H\right]=\mathbb{E}\left[Y(t)|\mathcal{F}_G\right]+\int_G^H Z(t,s) dW(s),~~t\in[H,T+K].
\end{aligned}
\end{equation*}
Thus,
\begin{equation*}
\begin{aligned}
\mathbb{E}\left[\int_G^H |Z(t,s)|^2 ds\right]\leq \mathbb{E}\left[|Y(t)|^2 \right],~~t\in[H, T+K].
\end{aligned}
\end{equation*}
Then (\ref{10}) yields that
\begin{equation}\label{11}
\begin{aligned}
&~~~\mathbb{E}\left[\int_H^{T+K}\int_G^H |Z(t,s)|^2 dsdt\right]\leq \mathbb{E}\left[\int_H^{T+K}|Y(t)|^2 dt\right]\\
&\leq C \mathbb{E}\left[\int_H^{T+K} |\varphi(t)|^2 dt+\int_H^T \left(\int_t^T  |g_0 (t,s)| ds\right)^2 dt +\int_H^{T+K} \int_T^{T+K} |\eta (t,s)|^2 dsdt \right].
\end{aligned}
\end{equation}
So far, the values of $(Y(t),Z(t,s))$ have been determined for $(t,s)\in [H, T+K]\times [G, T+K]$.
Under (\ref{10}) and (\ref{11}), we get
\begin{equation}\label{12}
\begin{aligned}
&~~~\mathbb{E}\left[\int_H^{T+K}|Y(t)|^2 dt+\int_H^{T+K}\int_G^{T+K} |Z(t,s)|^2 dsdt\right]\\
&\leq C \mathbb{E}\left[\int_H^{T+K} |\varphi(t)|^2 dt+\int_H^T \left(\int_t^T  |g_0 (t,s)| ds\right)^2 dt +\int_H^{T+K} \int_T^{T+K} |\eta (t,s)|^2 dsdt \right].
\end{aligned}
\end{equation}

Step 3: For $(t,s)\in [G, H]\times [H, T+K]$, it is clear that the values of $(Y(s),Z(s,t))$ are already confirmed
according to Steps $1$ and $2$. In the sequel of this paper, one can continue to determine the values of $Z(t,s)$
for $(t,s)\in [G, H]\times [H, T+K]$. More precisely, we investigate the stochastic Fredholm integral equation of the following form:
\begin{align}\label{13}
\left\{\begin{array}{lcl} \varphi^H (t) = \varphi (t)+\int_{H}^{T} \widehat{g} \left( t, s, Z(t, s),
Z(t, s+\zeta(s)),
\int_{s}^{s+\zeta (s)} e^{\lambda (s-\theta)} Z(t, \theta)d\theta \right)ds\\
\ \ \ \ \ \ \ \ \  \  \ -\int_{H}^{T} Z(t, s) dW(s),\ \ \
  t\in [G, H],\\
Z(t, s) = \eta(t, s), \ \ (t, s)\in [G, H]\times [T, T+K],
\end{array}
\right.
\end{align}
where
\begin{equation*}
\begin{aligned}
&\widehat{g} (t,s,z_1,z_2,z_3):= g \left( t, s, Y(s), z_1, Z(s, t), Y(s+\delta (s)), z_2, Z(s+\zeta(s), t), \right.\\
& ~~~~~~~~~~~~~~~~~~~~~~~~~\left.  \int_{s}^{s+\delta (s)} e^{\lambda (s-\theta)} Y(\theta)d\theta,
z_3,  \int_{s}^{s+\zeta (s)} e^{\lambda (s-\theta)} Z(\theta, t)d\theta  \right),\\
& \forall (t,s,z_1,z_2,z_3)\in [G, H]\times [H, T]\times \mathbb{R}^{n\times m} \times  L_{\mathcal{F}_{r_2}}^2 (\Omega; \mathbb{R}^{n\times m})\times L_{\mathcal{F}_{r_5}}^2 (\Omega; \mathbb{R}^{n\times m}).
\end{aligned}
\end{equation*}
In what follows, let us prove that (\ref{13}) admits a unique adapted solution
$(\varphi^H (\cdot), Z (\cdot,\cdot) ) \in L_{\mathcal{F}_{H}}^{2} (G, H; \mathbb{R}^n) \times L^2 (G,H;
L_{\mathbb{F}}^2 (H, T+K; \mathbb{R}^{n\times m}) )$. Indeed,
for any fixed $t\in [G, H]$, we consider the following ABSDE:
\begin{align}\label{59}
\left\{\begin{array}{lcl} \widehat{\mathcal{Y}}_t (r) = \varphi (t)+\int_{r}^{T} \widehat{g}( t, s, \widehat{\mathcal{Z}}_t (s),
\widehat{\mathcal{Z}}_t (s+\zeta(s)),
\int_{s}^{s+\zeta (s)} e^{\lambda (s-\theta)} \widehat{\mathcal{Z}}_t (\theta)d\theta )ds
-\int_{r}^{T} \widehat{\mathcal{Z}}_t (s) dW(s),\\
\ \ \ \ \ \ \ \ \ \   r\in [H, T],\\
\widehat{\mathcal{Z}}_t (r) = \eta(t, r), \ \ r\in [T, T+K].
\end{array}
\right.
\end{align}
Similar to the proof of Theorem 3.4 in \cite{Meng2015} and the proof of Theorem 4.2 in \cite{Peng2009}, and according to the standard Banach contraction mapping principle, we can obtain that ABSDE (\ref{59}) admits a unique
adapted solution
$(\widehat{\mathcal{Y}}_t (\cdot), \widehat{\mathcal{Z}}_t (\cdot))\in L_{\mathbb{F}}^{2} (H,T; \mathbb{R}^n)\times L_{\mathbb{F}}^2 (H,T+K; \mathbb{R}^{n\times m})$ and the following estimate holds:
\begin{equation}\label{60}
\begin{aligned}
&~~~\mathbb{E}\left[ \int_H^T |\widehat{\mathcal{Y}}_t (s)|^2 ds+\int_H^T |\widehat{\mathcal{Z}}_t (s)|^2 ds\right] \\
&\leq C \mathbb{E}\left[ |\varphi (t)|^2
+ \left(\int_H^T  |\widehat{g} (t,s,0,0,0)| ds\right)^2
+\int_T^{T+K}  |\eta (t,s)|^2 ds \right].
\end{aligned}
\end{equation}
Define
\begin{equation*}
\begin{aligned}
\varphi^H (t):=\widehat{\mathcal{Y}}_t (H),~t\in [G,H],~~~~~
Z(t, s):=
\widehat{\mathcal{Z}}_t (s),~(t,s)\in [G,H]\times [H,T+K].
\end{aligned}
\end{equation*}
Further, for each $t\in [G,H]$, since $\varphi^H (t):=\widehat{\mathcal{Y}}_t (H)\in L_{\mathcal{F}_H}^{2} (\Omega; \mathbb{R}^n)$, it is obvious that (\ref{13}) has a unique adapted solution $(\varphi^H (\cdot), Z (\cdot,\cdot) ) \in L_{\mathcal{F}_{H}}^{2} (G, H; \mathbb{R}^n) \times L^2 (G,H;
L_{\mathbb{F}}^2 (H, T+K; \mathbb{R}^{n\times m}))$.
By (\ref{60}), it holds that
\begin{equation*}
\begin{aligned}
&~~~\mathbb{E}\left[|\varphi^H (t)|^2 +\int_H^{T} |Z(t,s)|^2 ds\right]\\
&\leq C \mathbb{E}\left[ |\varphi(t)|^2 + \left(\int_H^T  \left|g \left( t, s, Y(s),0, Z(s, t), Y(s+\delta (s)), 0, Z(s+\zeta(s), t),  \right.\right.\right.\right.\\
& ~~~\left.\left.\left.\left.  \int_{s}^{s+\delta (s)} e^{\lambda (s-\theta)} Y(\theta)d\theta,
0,  \int_{s}^{s+\zeta (s)} e^{\lambda (s-\theta)} Z(\theta, t)d\theta  \right)\right| ds\right)^2  + \int_T^{T+K} |\eta (t,s)|^2 ds \right]\\
&\leq C \mathbb{E}\left[ |\varphi(t)|^2+ \left(\int_H^T  |g_0 (t,s)| ds\right)^2
+\int_H^{T+K} |Y(t)|^2 dt+ \int_H^{T+K} |Z(s,t)|^2 ds\right.\\
&\left.~~~+ \int_T^{T+K} |\eta (t,s)|^2 ds\right].
\end{aligned}
\end{equation*}
Note that $Z(t, s) = \eta(t, s)$ for any $(t, s)\in [G, H]\times [T, T+K]$, and by the basic estimate (\ref{12}), we know
\begin{equation}\label{14}
\begin{aligned}
&~~~\mathbb{E}\left[ \int_G^H  |\varphi^H (t)|^2 dt+\int_G^{H} \int_H^{T+K} |Z(t,s)|^2 ds dt \right]\\
&\leq C \mathbb{E}\left[\int_G^H |\varphi(t)|^2 dt+\int_G^H \left(\int_H^T  |g_0 (t,s)| ds\right)^2 dt
+\int_H^{T+K} |Y(t)|^2 dt \right.\\
&~~~\left.+\int_G^H \int_H^{T+K} |Z(s,t)|^2 ds dt+ \int_G^H \int_T^{T+K} |\eta (t,s)|^2 ds dt \right]\\
&\leq C \mathbb{E}\left[\int_G^H |\varphi(t)|^2 dt+\int_G^H \left(\int_H^T  |g_0 (t,s)| ds\right)^2 dt
+\int_H^{T+K} |Y(t)|^2 dt \right.\\
&~~~\left.+\int_H^{T+K} \int_G^H |Z(t,s)|^2 dsdt+ \int_G^H \int_T^{T+K} |\eta (t,s)|^2 ds dt \right]\\
&\leq C \mathbb{E}\left[\int_G^{T+K} |\varphi(t)|^2 dt+\int_G^T \left(\int_t^T  |g_0 (t,s)| ds\right)^2 dt +\int_G^{T+K} \int_T^{T+K} |\eta (t,s)|^2 dsdt \right].
\end{aligned}
\end{equation}

Step 4: From Steps $1$ to $3$, it is easy to check that the following processes have been determined:
\begin{equation}\label{15}
\begin{aligned}
\left\{\begin{array}{lcl} Y(t), ~t\in [H,T+K],\\
Z (t,s),~(t,s)\in\left([H,T+K]\times [G, T+K]\right)\bigcup \left([G, H]\times [H, T+K]  \right).
\end{array}
\right.
\end{aligned}
\end{equation}
For $t\in [G, H]$, we introduce the following equation:
\begin{equation}\label{16}
\begin{aligned}
 Y(t) = \varphi^H (t)+\int_{t}^{H} g\left(\Lambda ( t, s)   \right)ds
-\int_{t}^{H} Z(t, s) dW(s),\ \ \
  t\in [G, H].
\end{aligned}
\end{equation}
Since the values of $(Y(\cdot), Z(\cdot,\cdot))$ given in (\ref{15}) are already defined
and $\varphi^H (\cdot)\in L_{\mathcal{F}_{H}}^{2} (G, H$; $\mathbb{R}^n)$, one can have that
(\ref{16}) is an ABSVIE on $[G,H]$.
Using the similar method as in Step 1, we can know that (\ref{16}) admits a unique adapted M-solution $(Y(t), Z(t,s))\in\mathcal{M}^2 [G,H]$.
It then follows from (\ref{10}) and (\ref{14}) that
\begin{equation}\label{17}
\begin{aligned}
&~~~\mathbb{E}\left[ \int_G^H  |Y (t)|^2 dt+\int_G^{H} \int_G^{H} |Z(t,s)|^2 ds dt \right]\\
&\leq C \mathbb{E}\left[\int_G^H |\varphi^H (t)|^2 dt+\int_G^H \left(\int_t^H  |g_0 (t,s)| ds\right)^2 dt
+ \int_G^H \int_T^{T+K} |\eta (t,s)|^2 ds dt \right]
\end{aligned}
\end{equation}
\begin{equation*}
\begin{aligned}
&\leq C \mathbb{E}\left[\int_G^{T+K} |\varphi(t)|^2 dt+\int_G^T \left(\int_t^T  |g_0 (t,s)| ds\right)^2 dt +\int_G^{T+K} \int_T^{T+K} |\eta (t,s)|^2 dsdt \right].
\end{aligned}
\end{equation*}
To sum up, it has been proved the existence and uniqueness of adapted M-solutions to ABSVIE (\ref{1}) on $[G,T+K]$.
By (\ref{12}), (\ref{14}) and (\ref{17}) the following holds:
\begin{equation*}
\begin{aligned}
&~~~\mathbb{E}\left[ \int_G^{T+K}  |Y (t)|^2 dt+\int_G^{T+K} \int_G^{T+K} |Z(t,s)|^2 ds dt \right]\\
&\leq C \mathbb{E}\left[\int_G^{T+K} |\varphi(t)|^2 dt+\int_G^T \left(\int_t^T  |g_0 (t,s)| ds\right)^2 dt +\int_G^{T+K} \int_T^{T+K} |\eta (t,s)|^2 dsdt \right].
\end{aligned}
\end{equation*}
Therefore, the well-posedness of adapted M-solutions to ABSVIE (\ref{1})
follows immediately from the above similar procedures,
and the estimates (\ref{7})-(\ref{8}) also hold.
\end{proof}

\begin{remark}
Inspired by the problem of optimal consumption rates in a
market with delay discussed in \cite{Agram2013},
we introduce the average time-advanced function, and then study the well-posedness of adapted M-solutions to ABSVIE (\ref{1}).
In the proof of Theorem \ref{th1}, by using the norm (\ref{7}) defined on $\mathcal{M}^2 [0,T+K]$
and the existence and uniqueness of (\ref{13}),
there is no need to assume $\beta>0$ as in \cite{Wen2020} in proving the well-posedness to ABSVIE (\ref{1})
so that the conditions of Theorem 3.2 in \cite{Wen2020} have been weakened.
\end{remark}

\section{Comparison theorem for ABSVIEs}\label{sec4}

The aim of this section is to establish the comparison theorem for one-dimensional ABSVIEs,
which will be achieved by Theorem \ref{th1}.

For $i=1,2$, we study the following ABSVIE:
\begin{align}\label{18}
\left\{\begin{array}{lcl} Y_i(t) = \varphi_i (t)+\int_{t}^{T} g_i\left( t, s, Y_i(s), Z_i(t, s), Y_i(s+\delta (s)),
 \int_{s}^{s+\delta (s)} e^{\lambda (s-\theta)} Y_i (\theta)d\theta
 \right)ds\\
\ \ \ \ \ \ \  \  \ -\int_{t}^{T} Z_i (t, s) dW(s),\ \ \
  t\in [0, T],\\
Y_i (t) = \varphi_i (t), \ \ t\in [T, T+K].
\end{array}
\right.
\end{align}
It should be mentioned that the generator $g_i$ is independent of $Z_i (s,t)$, $Z_i (s+\zeta(s), t)$ and $\int_{s}^{s+\zeta (s)} e^{\lambda (s-\theta)} Z_i (\theta, t)d\theta$, so we no longer need to define an adapted M-solution for (\ref{18}).
It is easy to observe that under Assumptions $(A1)$-$(A2)$, for any given $\varphi_i (\cdot) \in \hat{L}_{\mathcal{F}_{T+K}}^{2} (0,T+K; \mathbb{R})$, ABSVIE (\ref{18}) admits
a unique adapted solution $(Y_i (\cdot), Z_i (\cdot,\cdot))\in L_{\mathbb{F}}^{2} (0,T+K; \mathbb{R})\times L_{\mathbb{F}}^{2} (\Delta; \mathbb{R})$.

\begin{theorem}\label{th2}
For $i=1,2$, let $g_i$ satisfy Assumptions $(A1)$-$(A2)$, and let $(Y_i (\cdot), Z_i (\cdot,\cdot))\in L_{\mathbb{F}}^{2} (0,T+K; \mathbb{R})\times L_{\mathbb{F}}^{2} (\Delta; \mathbb{R})$ be the adapted solution
of (\ref{18}). Assume that $\overline{g}=\overline{g}(t,s,y,z,$ $\alpha,\mu)$ satisfies Assumptions $(A1)$-$(A2)$ such that $y\mapsto \overline{g}(t,s,y,z,\alpha,\mu)$ is nondecreasing. Moreover,
for any $(t,s,y,z,\mu)\in \Delta\times\mathbb{R}\times\mathbb{R}\times L_{\mathcal{F}_{r_4}}^2 (\Omega; \mathbb{R})$,
$\overline{g}(t,s,y,z,\cdot,\mu)$ is increasing, namely,
take any $\alpha_i (\cdot)\in L_{\mathbb{F}}^2 (s, T+K; \mathbb{R})$,
if $\alpha_1 (r_1)\leq \alpha_2 (r_1), a.s., a.e.~r_1\in [s, T+K]$, then
\begin{equation*}
\begin{aligned}
&~~~~~~~~~~\overline{g}(t,s,y,z,\alpha_1 (r_1),\mu)\leq \overline{g}(t,s,y,z,\alpha_2 (r_1),\mu),\\
&\forall (t,y,z,\mu)\in [0,s]\times\mathbb{R}\times\mathbb{R}\times L_{\mathcal{F}_{r_4}}^2 (\Omega; \mathbb{R}),~a.s., a.e.~s\in[0, T].
\end{aligned}
\end{equation*}
For any $(t,s,y,z,\alpha)\in \Delta\times\mathbb{R}\times\mathbb{R}\times L_{\mathcal{F}_{r_1}}^2 (\Omega; \mathbb{R})$,
$\overline{g}(t,s,y,z,\alpha,\cdot)$ is increasing, namely,
take any $\mu_i (\cdot)\in L_{\mathbb{F}}^2 (s, T+K; \mathbb{R})$,
if $\mu_1 (r_4)\leq \mu_2 (r_4), a.s., a.e.~r_4\in [s, T+K]$, then
\begin{equation*}
\begin{aligned}
&~~~~~~~~~~\overline{g}(t,s,y,z,\alpha,\mu_1 (r_4))\leq \overline{g}(t,s,y,z,\alpha,\mu_2 (r_4)),\\
&\forall (t,y,z,\alpha)\in [0,s]\times\mathbb{R}\times\mathbb{R}\times L_{\mathcal{F}_{r_1}}^2 (\Omega; \mathbb{R}),~a.s., a.e.~s\in[0, T].
\end{aligned}
\end{equation*}
Furthermore,
\begin{equation*}
\begin{aligned}
&~~~~~~~~~~~~~g_1 (t,s,y,z,\alpha,\mu)\leq \overline{g}(t,s,y,z,\alpha,\mu)\leq g_2 (t,s,y,z,\alpha,\mu),\\
&\forall (t,y,z,\alpha,\mu)\in [0,s]\times\mathbb{R}\times\mathbb{R}\times L_{\mathcal{F}_{r_1}}^2 (\Omega; \mathbb{R})\times L_{\mathcal{F}_{r_4}}^2 (\Omega; \mathbb{R}),~a.s., a.e.~s\in[0, T].
\end{aligned}
\end{equation*}
Hence, for each $\varphi_i (\cdot)\in \hat{L}_{\mathcal{F}_{T+K}}^{2} (0,T+K; \mathbb{R})$, if $\varphi_1 (t)\leq\varphi_2 (t), a.s., a.e.~t\in[0, T+K]$, then
\begin{equation*}
\begin{aligned}
Y_1 (t)\leq Y_2 (t),~a.s., a.e.~t\in[0, T+K].
\end{aligned}
\end{equation*}
\end{theorem}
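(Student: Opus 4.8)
\noindent\emph{Proof proposal.}

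The plan is to reduce the comparison to a single $L^2$–energy estimate for the difference $\hat Y:=Y_2-Y_1$ (set also $\hat Z:=Z_2-Z_1$ and $\hat\varphi:=\varphi_2-\varphi_1\ge 0$). Subtracting the two equations (\ref{18}), the difference of the two generators can be written, for $(t,s)\in\Delta$, as $\Delta g(t,s)+a(t,s)\hat Y(s)+b(t,s)\hat Z(t,s)+c(t,s)\hat Y(s+\delta(s))+d(t,s)\int_s^{s+\delta(s)}e^{\lambda(s-\theta)}\hat Y(\theta)\,d\theta$: one inserts $\overline g$ twice—$g_2-\overline g$ evaluated along $(Y_2,Z_2)$ plus $\overline g-g_1$ evaluated along $(Y_1,Z_1)$ make up $\Delta g(t,s)$, which is $\ge 0$ by the sandwich $g_1\le\overline g\le g_2$—and telescopes the remaining difference $\overline g(\,\cdot\,;Y_2,Z_2)-\overline g(\,\cdot\,;Y_1,Z_1)$ by switching the four arguments $y,z,\alpha,\mu$ of $\overline g$ one at a time, producing the quotients $a,b,c,d$. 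Here $|b|\le C$ by (A2), and—this is the crucial use of the remaining hypotheses—the monotonicity of $\overline g$ in $y$, in the pointwise-advanced argument and in the average-advanced argument forces $a,c,d\ge 0$ (for $d$ one also uses $e^{\lambda(s-\theta)}>0$). Thus $(\hat Y,\hat Z)$ satisfies, \emph{by construction}, a linear anticipated BSVIE whose free term $\hat\varphi$, source $\Delta g$, and coefficients $a,c,d$ in front of the $\hat Y$-type terms are all nonnegative; I will not need separate well-posedness of this linear equation.

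Next, I fix $t\in[0,T]$ and read the $t$-section of this linear equation as a BSDE on $[t,T]$: let $\mathcal Y^t(\cdot)$ be the It\^o process with $\mathcal Y^t(t)=\hat Y(t)$, $\mathcal Y^t(T)=\hat\varphi(t)$, diffusion $\hat Z(t,\cdot)$, and drift $-\big(\Delta g+a\hat Y(\cdot)+b\hat Z(t,\cdot)+c\hat Y(\cdot+\delta(\cdot))+d\int_\cdot^{\cdot+\delta(\cdot)}e^{\lambda(\cdot-\theta)}\hat Y(\theta)d\theta\big)(t,\cdot)$. Applying It\^o's formula to $\big(\mathcal Y^t(r)^-\big)^2$ and taking expectations (with the usual localization making the stochastic integral mean-zero), the terminal term drops since $\hat\varphi(t)\ge 0$; the $\Delta g$-term is $\le 0$ and is discarded; each of the $a$-, $c$-, $d$-terms is, thanks to $a,c,d\ge 0$ and $\big(\int_s^{s+\delta(s)}e^{\lambda(s-\theta)}\hat Y(\theta)d\theta\big)^-\le\int_s^{s+\delta(s)}e^{\lambda(s-\theta)}\hat Y(\theta)^-d\theta$, controlled by $C\big[(\mathcal Y^t(s)^-)^2+(\hat Y(s)^-)^2\big]$, $C\big[(\mathcal Y^t(s)^-)^2+(\hat Y(s+\delta(s))^-)^2\big]$ and $C\big[(\mathcal Y^t(s)^-)^2+(\int_s^{s+\delta(s)}e^{\lambda(s-\theta)}\hat Y(\theta)^-d\theta)^2\big]$ respectively; and the cross term $2\mathcal Y^t(s)^-b(t,s)\hat Z(t,s)$ is absorbed, by Young's inequality, into $C^2(\mathcal Y^t(s)^-)^2$ plus the It\^o second-order term $1_{\{\mathcal Y^t(s)<0\}}|\hat Z(t,s)|^2$—a step that needs only $|b|\le C$. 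Gronwall in $r$ then gives $\mathbb E[(\hat Y(t)^-)^2]\le C_1\,\mathbb E\int_t^T\big[(\hat Y(s)^-)^2+(\hat Y(s+\delta(s))^-)^2+(\int_s^{s+\delta(s)}e^{\lambda(s-\theta)}\hat Y(\theta)^-d\theta)^2\big]ds$.

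Finally, set $w(t):=\mathbb E[(\hat Y(t)^-)^2]$ on $[0,T+K]$; on $[T,T+K]$ one has $\hat Y=\hat\varphi\ge 0$, so $w\equiv 0$ there. Since $\delta(\cdot)$ is deterministic, $\mathbb E[(\hat Y(s+\delta(s))^-)^2]=w(s+\delta(s))$, and assumption $(b)$ applied to the deterministic function $f_1=w$ gives $\int_t^T w(s+\delta(s))ds\le M\int_t^{T+K}w(s)ds$; the averaged term is bounded by $C\int_t^{T+K}w(s)ds$ in the same spirit, using that $\delta$ and $e^{\lambda(s-\theta)}$ are bounded on $[0,T+K]$ and that only $\theta\ge t$ contributes. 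Hence $w(t)\le C_2\int_t^{T+K}w(s)ds$ for every $t\in[0,T+K]$, and the backward Gronwall inequality forces $w\equiv 0$; that is, $\hat Y(t)^-=0$ a.s.\ for a.e.\ $t$, i.e.\ $Y_1(t)\le Y_2(t)$, a.s., a.e.\ $t\in[0,T+K]$.

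The step I expect to be the main obstacle is the first one: producing the linearization with the \emph{correct signs}, which is exactly where all the hypotheses are consumed (the sandwich for $\Delta g\ge 0$; the three monotonicities of $\overline g$—the average one via positivity of the kernel—for $a,c,d\ge 0$; $\varphi_1\le\varphi_2$ for the nonnegative free term), together with the fact that $b$, inheriting the time-advanced arguments of $\overline g$, is only $\mathcal F_{s+\delta(s)}$-measurable in $s$, so the classical Girsanov/stochastic-exponential route to comparison theorems is unavailable and one must route the argument through the $(\cdot^-)^2$-estimate, where $b$ enters only in a term dominated using $|b|\le C$. The remaining points—localization in It\^o's formula, and closing the Gronwall over $[0,T+K]$ despite the coupling of $\hat Y(t)$ to its own diagonal and to advanced values of $\hat Y$ (for which assumption $(b)$ is the key)—are routine. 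Equivalently, one may reach the same conclusion by a monotone Picard iteration: freeze the $\hat Y$-type terms at the previous iterate, solve the resulting affine-in-$\hat Z$ BSDEs (nonnegative by the same $(\cdot^-)^2$ computation), and pass to the limit as in the proof of Theorem \ref{th1}.
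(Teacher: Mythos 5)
Your route is genuinely different from the paper's. The paper never linearizes: it sandwiches an intermediate solution $\overline{Y}$ (with generator $\overline{g}$ and data $\overline{\varphi}$ between $\varphi_1$ and $\varphi_2$), freezes the average-advanced slot at the previous iterate, invokes the known comparison theorem of Wen--Shi (Theorem 4.1 of [Wen2020]) for the resulting ABSVIE with only pointwise-advanced terms, uses the monotonicity in $\mu$ to get a decreasing sequence $\widetilde{Y}_d$, and identifies the limit with $\overline{Y}$ via a weighted-norm Cauchy estimate. So the ``alternative'' you mention in your last sentence is in fact the paper's proof; your primary proposal (direct $(\cdot^-)^2$ energy estimate on the $t$-sections plus backward Gronwall over $[0,T+K]$) is self-contained, avoids the appeal to [Wen2020], and handles the three monotonicities simultaneously, which is a real advantage if it can be made rigorous.

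There is, however, one concrete gap, and it sits exactly where you predicted: the linearization. Under (A1)--(A2) the generator acts on the anticipated arguments $\alpha$ and $\mu$ as a map from $L^2_{\mathcal{F}_{r_1}}(\Omega;\mathbb{R})$ (resp.\ $L^2_{\mathcal{F}_{r_4}}(\Omega;\mathbb{R})$) into $L^2_{\mathcal{F}_s}(\Omega;\mathbb{R})$, with Lipschitz continuity measured through $\mathbb{E}[\,\cdot\,|\mathcal{F}_s]$, and the monotonicity is with respect to the a.s.\ order on random variables. Consequently the ``quotients'' $c(t,s)$ and $d(t,s)$ are not well-defined scalar processes: the increment $\overline{g}(\ldots,Y_2(s+\delta(s)),\ldots)-\overline{g}(\ldots,Y_1(s+\delta(s)),\ldots)$ is $\mathcal{F}_s$-measurable while $\hat{Y}(s+\delta(s))$ is not, so no identity $c(t,s)\hat{Y}(s+\delta(s))$ with $c\geq0$ can hold (think of $\overline{g}(\alpha)=\mathbb{E}[\alpha\,|\,\mathcal{F}_s]$, whose ``derivative'' is the conditional expectation operator, not a nonnegative multiplier). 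The fix is to replace the exact linearization of the $\alpha$- and $\mu$-slots by a one-sided inequality: since $\alpha_2\geq\alpha_1-(\alpha_2-\alpha_1)^-$, monotonicity followed by (A2) gives
\begin{equation*}
\overline{g}(t,s,y,z,\alpha_2,\mu)-\overline{g}(t,s,y,z,\alpha_1,\mu)\;\geq\;-C\,\mathbb{E}\bigl[(\alpha_2-\alpha_1)^-\,\big|\,\mathcal{F}_s\bigr],
\end{equation*}
and likewise in $\mu$, where for the averaged slot one further uses $\bigl(\int_s^{s+\delta(s)}e^{\lambda(s-\theta)}\hat{Y}(\theta)\,d\theta\bigr)^-\leq\int_s^{s+\delta(s)}e^{\lambda(s-\theta)}\hat{Y}(\theta)^-\,d\theta$. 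Feeding these lower bounds into your It\^{o} computation, the offending terms become $2\mathcal{Y}^t(s)^-\,C\,\mathbb{E}[\hat{Y}(s+\delta(s))^-\,|\,\mathcal{F}_s]$ etc., which Young's inequality, Jensen and the tower property control exactly as you intended; the rest of your argument (absorption of the $b$-term into the quadratic variation, Gronwall in $r$, assumption (b) to fold the advanced terms back into $\int_t^{T+K}w$, backward Gronwall) goes through unchanged. Two minor remarks: your measurability worry about $b$ is misplaced --- by (A1) the generator maps into $L^2_{\mathcal{F}_s}$, so the $z$-difference quotient is $\mathcal{F}_s$-measurable; the genuine obstruction to the Girsanov route is the anticipated dependence itself, not $b$. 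And the same one-sided-inequality device is what makes the $y$-slot work too, although there the pointwise quotient $a$ happens to be legitimate.
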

\begin{proof}
Suppose that $\overline{\varphi} (\cdot)\in \hat{L}_{\mathcal{F}_{T+K}}^{2} (0,T+K; \mathbb{R})$ satisfies
\begin{equation*}
\begin{aligned}
\varphi_1 (t) \leq \overline{\varphi} (t) \leq \varphi_2 (t),~a.s., a.e.~t\in [0,T+K].
\end{aligned}
\end{equation*}
Thanks to Theorem \ref{th1}, let $(\overline{Y}(\cdot), \overline{Z}(\cdot, \cdot))\in L_{\mathbb{F}}^{2} (0,T+K; \mathbb{R})\times L_{\mathbb{F}}^{2} (\Delta; \mathbb{R})$ be the unique adapted solution to the following equation:
\begin{align*}
\left\{\begin{array}{lcl} \overline{Y}(t) =\overline{ \varphi} (t)+\int_{t}^{T} \overline{g} \left( t, s, \overline{Y}(s), \overline{Z}(t, s), \overline{Y}(s+\delta (s)),
 \int_{s}^{s+\delta (s)} e^{\lambda (s-\theta)} \overline{Y}(\theta)d\theta
 \right)ds\\
\ \ \ \ \ \ \  \  \ -\int_{t}^{T} \overline{Z} (t, s) dW(s),\ \ \
  t\in [0, T],\\
\overline{Y} (t) = \overline{\varphi} (t), \ \ t\in [T, T+K].
\end{array}
\right.
\end{align*}
Set $\widetilde{Y}_0 (\cdot)=Y_2 (\cdot)$. By Theorem $3.3$ in \cite{Wen2020},
there exists a unique adapted solution
$(\widetilde{Y}_1 (\cdot), \widetilde{Z}_1 (\cdot,\cdot))$ $\in L_{\mathbb{F}}^{2} (0,T+K; \mathbb{R})\times L_{\mathbb{F}}^{2} (\Delta; \mathbb{R})$ to the following equation:
\begin{align*}
\left\{\begin{array}{lcl} \widetilde{Y}_1 (t) =\overline{ \varphi} (t)+\int_{t}^{T} \overline{g} \left( t, s, \widetilde{Y}_1 (s), \widetilde{Z}_1 (t, s), \widetilde{Y}_1(s+\delta (s)),
 \int_{s}^{s+\delta (s)} e^{\lambda (s-\theta)} \widetilde{Y}_0 (\theta)d\theta
 \right)ds\\
\ \ \ \ \ \ \  \  \ -\int_{t}^{T} \widetilde{Z}_1 (t, s) dW(s),\ \ \
  t\in [0, T],\\
\widetilde{Y}_1 (t) = \overline{\varphi} (t), \ \ t\in [T, T+K].
\end{array}
\right.
\end{align*}
It should be pointed out that
\begin{equation*}
\begin{aligned}
&\overline{g}\left(t,s,y,z,\alpha, \int_{s}^{s+\delta (s)} e^{\lambda (s-\theta)} \widetilde{Y}_0 (\theta) d\theta\right)
\leq g_2 \left(t,s,y,z,\alpha, \int_{s}^{s+\delta (s)} e^{\lambda (s-\theta)} \widetilde{Y}_0 (\theta) d\theta\right),
\end{aligned}
\end{equation*}
for $(t,y,z,\alpha)\in [0,s] \times\mathbb{R}\times\mathbb{R}\times L_{\mathcal{F}_{r_1}}^2 (\Omega; \mathbb{R}), a.s., a.e.~s\in[0,T]$, and $\overline{\varphi} (t)\leq \varphi_2 (t)$ for $a.s., a.e.~t\in[0,T+K]$.
Then by Theorem $4.1$ in \cite{Wen2020}, we have
\begin{equation}\label{48}
\begin{aligned}
\widetilde{Y}_1 (t)\leq \widetilde{Y}_0 (t)=Y_2 (t),~a.s., a.e.~t\in[0,T+K].
\end{aligned}
\end{equation}
Again, let $(\widetilde{Y}_2 (\cdot), \widetilde{Z}_2 (\cdot,\cdot))\in L_{\mathbb{F}}^{2} (0,T+K; \mathbb{R})\times L_{\mathbb{F}}^{2} (\Delta; \mathbb{R})$ satisfy the following equation:
\begin{align*}
\left\{\begin{array}{lcl} \widetilde{Y}_2 (t) =\overline{ \varphi} (t)+\int_{t}^{T} \overline{g} \left( t, s, \widetilde{Y}_2 (s), \widetilde{Z}_2 (t, s), \widetilde{Y}_2 (s+\delta (s)),
 \int_{s}^{s+\delta (s)} e^{\lambda (s-\theta)} \widetilde{Y}_1 (\theta)d\theta
 \right)ds\\
\ \ \ \ \ \ \  \  \ -\int_{t}^{T} \widetilde{Z}_2 (t, s) dW(s),\ \ \
  t\in [0, T],\\
\widetilde{Y}_2 (t) = \overline{\varphi} (t), \ \ t\in [T, T+K].
\end{array}
\right.
\end{align*}
Since $\mu\mapsto \overline{g}(t,s,y,z,\alpha,\mu)$ is nondecreasing, it follows from (\ref{48}) that
\begin{align*}
&\overline{g}\left(t,s,y,z,\alpha, \int_{s}^{s+\delta (s)} e^{\lambda (s-\theta)} \widetilde{Y}_1 (\theta) d\theta\right)
\leq  \overline{g}\left(t,s,y,z,\alpha, \int_{s}^{s+\delta (s)} e^{\lambda (s-\theta)} \widetilde{Y}_0 (\theta) d\theta\right), \\
&~~~~~~~~~~\forall (t,y,z,\alpha)\in [0, s] \times\mathbb{R}\times\mathbb{R}\times L_{\mathcal{F}_{r_1}}^2 (\Omega; \mathbb{R}),~ a.s., a.e.~s\in [0, T].
\end{align*}
By the similar argument used in the above proof, we arrive at
\begin{align*}
\widetilde{Y}_2 (t)\leq\widetilde{Y}_1 (t),~a.s., a.e.~t\in[0,T+K].
\end{align*}
Clearly, there exists a sequence $\{(\widetilde{Y}_d (\cdot), \widetilde{Z}_d (\cdot,\cdot))\}_{d=1}^{\infty}$
in the Banach space $L_{\mathbb{F}}^{2} (0,T+K; \mathbb{R})\times L_{\mathbb{F}}^{2} (\Delta; \mathbb{R})$ such that
\begin{align}\label{19}
\left\{\begin{array}{lcl} \widetilde{Y}_d (t) =\overline{ \varphi} (t)+\int_{t}^{T} \overline{g} \left( t, s, \widetilde{Y}_d (s), \widetilde{Z}_d (t, s), \widetilde{Y}_d (s+\delta (s)),
 \int_{s}^{s+\delta (s)} e^{\lambda (s-\theta)} \widetilde{Y}_{d-1} (\theta)d\theta
 \right)ds\\
\ \ \ \ \ \ \  \  \ -\int_{t}^{T} \widetilde{Z}_d (t, s) dW(s),\ \ \
  t\in [0, T],\\
\widetilde{Y}_d (t) = \overline{\varphi} (t), \ \ t\in [T, T+K].
\end{array}
\right.
\end{align}
We claim that
\begin{equation*}
\begin{aligned}
Y_2 (t)=\widetilde{Y}_0 (t) \geq\widetilde{Y}_1 (t)\geq\widetilde{Y}_2 (t)\geq\cdots\geq\widetilde{Y}_d (t)\geq\cdots,~a.s., a.e.~t\in[0,T+K].
\end{aligned}
\end{equation*}

Now let us prove that $\{(\widetilde{Y}_d (\cdot), \widetilde{Z}_d (\cdot,\cdot))\}_{d=1}^{\infty}$ is a Cauchy sequence in $L_{\mathbb{F}}^{2} (0,T+K; \mathbb{R})\times L_{\mathbb{F}}^{2} (\Delta; \mathbb{R})$.
To this end, we need to introduce the following new equivalent norm in $L_{\mathbb{F}}^{2} (0,T+K; \mathbb{R})\times L_{\mathbb{F}}^{2} (\Delta; \mathbb{R})$:
\begin{equation*}
\begin{aligned}
\|(y(\cdot),z(\cdot,\cdot))\|_{L_{\mathbb{F}}^{2} (0,T+K; \mathbb{R})\times L_{\mathbb{F}}^{2} (\Delta; \mathbb{R})}^2:=
\mathbb{E}\left[\int_0^{T+K} e^{\beta t} |y(t)|^2 dt+\int_0^{T} e^{\beta t} \int_t^{T}  |z(t,s)|^2 dsdt\right],
\end{aligned}
\end{equation*}
where $(y(\cdot),z(\cdot,\cdot))\in L_{\mathbb{F}}^{2} (0,T+K; \mathbb{R})\times L_{\mathbb{F}}^{2} (\Delta; \mathbb{R})$, and $\beta$ is a undetermined positive constant.
One can easily derive from the stability estimate $(3.45)$ in \cite{Yong2008} that
\begin{equation*}
\begin{aligned}
&~~~\mathbb{E}\left[|\widetilde{Y}_d (t)-\widetilde{Y}_l (t)|^2 +\int_t^T |\widetilde{Z}_d (t,s)-\widetilde{Z}_l (t,s)|^2 ds\right]\\
&\leq \mathbb{E}\left[\left(\int_t^T \left|\overline{g} \left( t, s, \widetilde{Y}_d (s), \widetilde{Z}_d (t, s), \widetilde{Y}_d (s+\delta (s)),
 \int_{s}^{s+\delta (s)} e^{\lambda (s-\theta)} \widetilde{Y}_{d-1} (\theta)d\theta
 \right)\right.\right.\right.\\
&~~~\left.\left.\left.-\overline{g} \left( t, s, \widetilde{Y}_d (s), \widetilde{Z}_d (t, s), \widetilde{Y}_l (s+\delta (s)),
 \int_{s}^{s+\delta (s)} e^{\lambda (s-\theta)} \widetilde{Y}_{l-1} (\theta)d\theta
 \right)   \right| ds \right)^2 \right].
\end{aligned}
\end{equation*}
It is straightforward to check that
\begin{equation*}
\begin{aligned}
&~~~~\mathbb{E}\left[\int_0^T e^{\beta t} |\widetilde{Y}_d (t)-\widetilde{Y}_l (t)|^2 dt
+\int_0^T e^{\beta t}\left( \int_t^T  |\widetilde{Z}_d (t,s)-\widetilde{Z}_l (t,s)|^2 ds\right) dt\right]\\
&\leq \mathbb{E}\left[ \int_0^T e^{\beta t} \left(\int_t^T \left|\overline{g} \left( t, s, \widetilde{Y}_d (s), \widetilde{Z}_d (t, s), \widetilde{Y}_d (s+\delta (s)),
 \int_{s}^{s+\delta (s)} e^{\lambda (s-\theta)} \widetilde{Y}_{d-1} (\theta)d\theta
 \right)\right.\right.\right.\\
&~~~\left.\left.\left.-\overline{g} \left( t, s, \widetilde{Y}_d (s), \widetilde{Z}_d (t, s), \widetilde{Y}_l (s+\delta (s)),
 \int_{s}^{s+\delta (s)} e^{\lambda (s-\theta)} \widetilde{Y}_{l-1} (\theta)d\theta
 \right)   \right| ds \right)^2 dt \right]\\
&\leq \frac{Q}{\beta}\mathbb{E}\left[\int_0^{T+K} e^{\beta t} |\widetilde{Y}_d (t)-\widetilde{Y}_l (t)|^2 dt
+\int_0^T e^{\beta t}\left( \int_t^T  |\widetilde{Z}_d (t,s)-\widetilde{Z}_l (t,s)|^2 ds\right) dt\right. \\
&~~~\left.+\int_0^{T+K} e^{\beta t} |\widetilde{Y}_{d-1} (t)-\widetilde{Y}_{l-1} (t)|^2 dt  \right],
\end{aligned}
\end{equation*}
where $Q=CTM+\frac{CT^2}{2\lambda}\left(1-e^{-2\lambda (T+K)}\right)$. Since $\widetilde{Y}_d (t)-\widetilde{Y}_l (t)=0$ for any $t\in [T, T+K]$, one can obtain that
\begin{equation*}
\begin{aligned}
&~~~~\left(1-\frac{Q}{\beta}\right)\mathbb{E}\left[\int_0^{T+K} e^{\beta t} |\widetilde{Y}_d (t)-\widetilde{Y}_l (t)|^2 dt+\int_0^T e^{\beta t} \left(\int_t^T  |\widetilde{Z}_d (t,s)-\widetilde{Z}_l (t,s)|^2 ds\right)dt\right]\\
&\leq\frac{Q}{\beta}\mathbb{E}\left[\int_0^{T+K} e^{\beta t} |\widetilde{Y}_{d-1} (t)-\widetilde{Y}_{l-1} (t)|^2 dt\right].
\end{aligned}
\end{equation*}
If $\beta=3Q>0$, then the following estimate holds:
\begin{equation*}
\begin{aligned}
&~~~~\mathbb{E}\left[\int_0^{T+K} e^{\beta t} |\widetilde{Y}_d (t)-\widetilde{Y}_l (t)|^2 dt+\int_0^T e^{\beta t} \left(\int_t^T  |\widetilde{Z}_d (t,s)-\widetilde{Z}_l (t,s)|^2 ds\right)dt\right]\\
&\leq\frac{1}{2}\mathbb{E}\left[\int_0^{T+K} e^{\beta t} |\widetilde{Y}_{d-1} (t)-\widetilde{Y}_{l-1} (t)|^2 dt\right]\\
&\leq\frac{1}{2}\mathbb{E}\left[\int_0^{T+K} e^{\beta t} |\widetilde{Y}_{d-1} (t)-\widetilde{Y}_{l-1} (t)|^2 dt
+\int_0^T e^{\beta t} \left(\int_t^T  |\widetilde{Z}_{d-1} (t,s)-\widetilde{Z}_{l-1} (t,s)|^2 ds\right)dt \right].
\end{aligned}
\end{equation*}
This means that $\{(\widetilde{Y}_d (\cdot), \widetilde{Z}_d (\cdot,\cdot))\}_{d=1}^{\infty}$ is a Cauchy sequence in the Banach space $L_{\mathbb{F}}^{2} (0,T+K;\mathbb{R})\times L_{\mathbb{F}}^{2} (\Delta; \mathbb{R})$ and exists a unique limit $(\widetilde{Y} (\cdot), \widetilde{Z} (\cdot,\cdot))\in L_{\mathbb{F}}^{2} (0,T+K;\mathbb{R})\times L_{\mathbb{F}}^{2} (\Delta; \mathbb{R})$ such that
\begin{align*}
\lim_{d \rightarrow\infty} \mathbb{E}\left[\int_0^{T+K} e^{\beta t} |\widetilde{Y}_d (t)-\widetilde{Y} (t)|^2 dt+\int_0^T e^{\beta t} \left(\int_t^T  |\widetilde{Z}_d (t,s)-\widetilde{Z} (t,s)|^2 ds\right)dt\right]=0.
\end{align*}
Therefore, by letting $d\rightarrow\infty$ in (\ref{19}) yields
\begin{align*}
\left\{\begin{array}{lcl} \widetilde{Y} (t) =\overline{ \varphi} (t)+\int_{t}^{T} \overline{g} \left( t, s, \widetilde{Y} (s), \widetilde{Z} (t, s), \widetilde{Y} (s+\delta (s)),
 \int_{s}^{s+\delta (s)} e^{\lambda (s-\theta)} \widetilde{Y} (\theta)d\theta
 \right)ds\\
\ \ \ \ \ \ \  \  \ -\int_{t}^{T} \widetilde{Z} (t, s) dW(s),\ \ \
  t\in [0, T],\\
\widetilde{Y} (t) = \overline{\varphi} (t), \ \ t\in [T, T+K].
\end{array}
\right.
\end{align*}
By the uniqueness of Theorem \ref{th1}, we can show that
\begin{equation*}
\begin{aligned}
Y_2 (t)=\widetilde{Y}_0 (t)\geq\widetilde{Y}(t)= \overline{Y}(t),~a.s., a.e.~t\in[0,T+K].
\end{aligned}
\end{equation*}
Similar to the above discussion, we have the following:
\begin{equation*}
\begin{aligned}
\overline{Y} (t)\geq Y_1 (t),~a.s., a.e.~t\in[0,T+K].
\end{aligned}
\end{equation*}
Thus, it holds immediately
\begin{equation*}
\begin{aligned}
Y_2 (t)\geq Y_1 (t),~a.s., a.e.~t\in[0, T+K].
\end{aligned}
\end{equation*}
\end{proof}

\section{Regularity of adapted M-solutions to ABSVIEs}\label{sec5}

In this section, we would like to prove some regularity results
of the adapted M-solution to ABSVIE (\ref{1}) by virtue of Malliavin calculus.
For simplicity, let us first introduce some spaces with respect to Malliavin calculus.
The related theory on this subject can be found in \cite{Karoui1997,Nualart1995}.

Denote by $\Sigma$ the space of all $\mathcal{F}_{T+K}$-measurable random variables $F$ of the form
$$F=f(W(h_1),\ldots,W(h_{k})),$$
where $f\in C_b^\infty (\mathbb{R}^{k}, \mathbb{R})$, $h_1,\ldots,h_{k}\in L^2(0,T+K;\mathbb{R}^m)$, and
\begin{align*}
W(h_i):=\int_0^{T+K} h_i(t) dW(t).
\end{align*}

For each $F\in \Sigma$, we define
\begin{align*}
D_t F:=\sum\limits_{i=1}^{k} f_{x_i} (W(h_1),\ldots,W(h_{k}))h_i (t),~~0\leq t \leq T+K,
\end{align*}
where $f_{x_i}$ is the derivative of $f$ with respect to its $i$-th variable, and call $D_t F~(0\leq t\leq T+K)$ the Malliavin derivative of $F$ with respect to $W(\cdot)$. Denote the norm on $\Sigma$ by
\begin{align*}
\|F\|_{\mathbb{D}_{1,2}}^2:=\mathbb{E}\left[ |F|^2 +\int_0^{T+K} |D_t F|^2 dt\right]
:=\mathbb{E}\left[ |F|^2 +\int_0^{T+K} \sum\limits_{i=1}^{m} |D_t^i F|^2 dt\right],
\end{align*}
where $D_t^i F$ is the $i$-th component of $D_t F$. Let $\mathbb{D}_{1,2}$ be the closure of $\Sigma$ under the norm $\|\cdot\|_{\mathbb{D}_{1,2}}$, and then $\mathbb{D}_{1,2}$ is a Sobolev space.

Let $\mathbb{Y}^2 [0, T+K]$ be the space of all processes $y(\cdot)\in \hat{L}_{\mathcal{F}_{T+K}}^{\infty} (0,T+K; \mathbb{R}^n)$ such that
\begin{align*}
\|y(\cdot)\|_{\mathbb{Y}^2 [0, T+K]}^2
:=\sup\limits_{t\in [0, T+K]} \mathbb{E}\left[ |y(t)|^2 +\int_t^{T+K} \sum\limits_{i=1}^{m} |D_t^i y(s)|^2 ds\right]<\infty.
\end{align*}
Let $\mathbb{Z}^2 [0, T+K]$ be the space of all processes $z(\cdot,\cdot): \Omega\times [0, T+K]^2 \mapsto \mathbb{R}^{n\times m}$
such that for any $t\in [0, T+K]$, $z(t,\cdot)$ is $\mathbb{F}$-adapted, and
\begin{align*}
\|z(\cdot,\cdot)\|_{\mathbb{Z}^2 [0, T+K]}^2
&:=\sup\limits_{t\in [0, T+K]} \mathbb{E}\left[ \int_0^{T+K} |z(t,s)|^2 ds
+\int_t^{T+K} |z(s,t)|^2 ds\right.\\
&\left.~~~~+\int_t^{T+K}\int_t^{T+K} \sum\limits_{i=1}^{m} \left|D_t^i z(u,s)\right|^2 dsdu\right]<\infty.
\end{align*}
For all $t\in [0,T+K]$ and for each $i=1,2,\cdots,m$, we define $\mathbb{M}^2 [0,T+K]$ to be the space of all pairs $(y(\cdot),z(\cdot,\cdot))\in
\mathbb{Y}^2 [0, T+K]\times \mathbb{Z}^2 [0, T+K]$ such that both $(y(\cdot),z(\cdot,\cdot))$ and
$(D_t^i y(\cdot),D_t^i z(\cdot,\cdot))$ satisfy (\ref{2}).
For any $0\leq A<B \leq T+K$, the spaces $\mathbb{Y}^2 [A, B]$, $\mathbb{Z}^2 [A, B]$ and $\mathbb{M}^2 [A,B]$ can be defined similarly.

To facilitate the discussion of the regularity
to adapted M-solutions, we rewrite ABSVIE (\ref{1}) as follows:
\begin{align}\label{20}
\left\{\begin{array}{lcl} Y(t) = \varphi (t)+\int_{t}^{T} g\left( \Lambda (t,s) \right)ds
 -\int_{t}^{T} Z(t, s) dW(s),\ \ \
  t\in [0, T],\\
Y(t) = \varphi (t), \ \ t\in [T, T+K],\\
Z(t, s) = \eta(t, s), \ \ (t, s)\in [0, T+K]^2 \backslash [0, T]^2,
\end{array}
\right.
\end{align}
where
\begin{align*}
\Lambda (t,s)&:= \left( t, s, Y(s), Z(t, s), Z(s, t), Y(s+\delta (s)),
Z(t, s+\zeta(s)), Z(s+\zeta(s), t),\right.\\
&~~~ \left. \int_{s}^{s+\delta (s)} e^{\lambda (s-\theta)} Y(\theta)d\theta,
\int_{s}^{s+\zeta (s)} e^{\lambda (s-\theta)} Z(t, \theta)d\theta,  \int_{s}^{s+\zeta (s)} e^{\lambda (s-\theta)} Z(\theta, t)d\theta  \right),
\end{align*}
and suppose that the generator $g$ satisfies the following assumptions:
\begin{itemize}
\item[(A3)]
$g:\Omega\times\Delta\times (\mathbb{R}^n \times\mathbb{R}^{n\times m}\times\mathbb{R}^{n\times m})^3 \mapsto \mathbb{R}^n$
is $\mathcal{F}_T\otimes\mathcal{B}(\Delta\times (\mathbb{R}^n \times\mathbb{R}^{n\times m}\times\mathbb{R}^{n\times m})^3)$-measurable
such that $s\mapsto g(t,s,y,z,\xi,\alpha,\beta,\gamma,\mu,\nu, \psi)$ is $\mathbb{F}$-progressively measurable for all
$(t,y,z,\xi,\alpha,\beta,\gamma,\mu,\nu, \psi)\in [0,T]\times(\mathbb{R}^n \times\mathbb{R}^{n\times m}\times\mathbb{R}^{n\times m})^3$.	
For any $(t,s)\in \Delta$, $g(\cdot,t,s,\cdot,\cdot,\cdot,\cdot,\cdot,\cdot,\cdot,\cdot,\cdot,)$
is a map from $\Omega\times \mathbb{R}^n \times\mathbb{R}^{n\times m}\times\mathbb{R}^{n\times m}
\times L_{\mathcal{F}_{r_1}}^2 (\Omega; \mathbb{R}^n)\times L_{\mathcal{F}_{r_2}}^2 (\Omega; \mathbb{R}^{n\times m})
\times L_{\mathcal{F}_{r_3}}^2 (\Omega; \mathbb{R}^{n\times m})\times L_{\mathcal{F}_{r_4}}^2 (\Omega; \mathbb{R}^n)
\times L_{\mathcal{F}_{r_5}}^2 (\Omega; \mathbb{R}^{n\times m})\times L_{\mathcal{F}_{r_6}}^2 (\Omega; \mathbb{R}^{n\times m})$ to
$L_{\mathcal{F}_{s}}^2 (\Omega;\mathbb{R}^n)$ defined by
\begin{equation*}
\begin{aligned}
&~~~~g(\cdot,t,s,\cdot,\cdot,\cdot,\cdot,\cdot,\cdot,\cdot,\cdot,\cdot)(\omega,y,z,\xi,
\alpha,\beta,\gamma,\mu,\nu, \psi)\\
&:=g(\omega,t,s,y,z,\xi,\alpha (\omega),\beta(\omega),\gamma(\omega),\mu(\omega),\nu(\omega), \psi(\omega)),\\
&\forall (\omega,y,z,\xi,
\alpha,\beta,\gamma,\mu,\nu, \psi)\in \Omega\times \mathbb{R}^n \times\mathbb{R}^{n\times m}\times\mathbb{R}^{n\times m}
\times L_{\mathcal{F}_{r_1}}^2 (\Omega; \mathbb{R}^n)\times L_{\mathcal{F}_{r_2}}^2 (\Omega; \mathbb{R}^{n\times m})
\\
&\times L_{\mathcal{F}_{r_3}}^2 (\Omega; \mathbb{R}^{n\times m})\times L_{\mathcal{F}_{r_4}}^2 (\Omega; \mathbb{R}^n)
\times L_{\mathcal{F}_{r_5}}^2 (\Omega; \mathbb{R}^{n\times m})\times L_{\mathcal{F}_{r_6}}^2 (\Omega; \mathbb{R}^{n\times m}),
\end{aligned}
\end{equation*}
where $r_i \in [s, T+K]$, $i=1,\cdots,6$,
and (\ref{46}) holds.
Moreover, for almost all $(\omega,t,s)\in \Omega\times\Delta$,
the map $(y,z,\xi,\alpha,\beta,\gamma,\mu,\nu, \psi)\mapsto g(\omega,t,s,y,z,\xi,\alpha,\beta,\gamma,\mu,$ $\nu, \psi)$ is continuously differentiable, and
all partial derivatives of $g$ are uniformly bounded.
\item[(A4)]
For almost all $(\omega,t,s)\in \Omega\times\Delta$, the map $(y,z,\xi,\alpha,\beta,\gamma,\mu,\nu, \psi)\mapsto \left[D_r^i g\right](\omega,t,s,y,z,\xi,\alpha,$ $\beta,\gamma,\mu,\nu,\psi)$ is continuous for all $r\in [0,T]$ and $i=1,2,\cdots,m$.
There exists an $\mathcal{F}_{T}\otimes\mathcal{B}(\Delta)$-measurable process $L (t,s): \Omega\times\Delta\mapsto [0,+\infty)$ satisfying
\begin{align*}
\sup_{t\in [0,T]}\mathbb{E}\left[\left(\int_t^T |L (t,s)| ds\right)^2 \right]<\infty,
\end{align*}
such that
\begin{align*}
&\sum\limits_{i=1}^{m} \left|\left[D_r^i g\right] (\omega,t,s,y,z,\xi,\alpha,\beta,\gamma,\mu,\nu, \psi)\right|\leq L (\omega,t,s),\\
&\forall (\omega,r,t,s,y,z,\xi,\alpha,\beta,\gamma,\mu,\nu, \psi)
\in \Omega\times[0,T]\times\Delta\times\mathbb{R}^n \times\mathbb{R}^{n\times m}\times\mathbb{R}^{n\times m}
\times L_{\mathcal{F}_{r_1}}^2 (\Omega; \mathbb{R}^n)\\
&\times L_{\mathcal{F}_{r_2}}^2 (\Omega; \mathbb{R}^{n\times m})
\times L_{\mathcal{F}_{r_3}}^2 (\Omega; \mathbb{R}^{n\times m})\times L_{\mathcal{F}_{r_4}}^2 (\Omega; \mathbb{R}^n)
\times L_{\mathcal{F}_{r_5}}^2 (\Omega; \mathbb{R}^{n\times m})\times L_{\mathcal{F}_{r_6}}^2 (\Omega; \mathbb{R}^{n\times m}).
\end{align*}
\end{itemize}

Next, we state and prove the main conclusion of this section, which concerns
the differentiability of the adapted M-solution to ABSVIE (\ref{20}).
\begin{theorem}\label{th3}
Suppose that Assumptions $(A3)$-$(A4)$ hold. For any given $(\varphi (\cdot),\eta(\cdot, \cdot))\in \mathbb{M}^2 [0,T+K]$, let $(Y(\cdot), Z(\cdot,\cdot))\in \mathcal{M}^2 [0,T+K]$ be the corresponding unique adapted M-solution to ABSVIE (\ref{20}).
Then for any $0\leq r \leq T+K$ and $1\leq i\leq m$, $(D_r^i Y(\cdot), D_r^i Z(\cdot,\cdot))$ is the unique
adapted M-solution of the following ABSVIE:
\begin{align}\label{21}
\left\{\begin{array}{lcl}
D_r^i Y(t) =D_r^i \varphi (t)+\int_{t}^{T} \left\{\left[D_r^i g\right]\left(\Lambda (t,s)\right)
+g_y \left(\Lambda (t,s)\right)D_r^i Y(s)+g_\alpha \left(\Lambda (t,s)\right)D_r^i Y(s+\delta (s))\right.\\
~~~~~~~~~~~~+g_\mu \left(\Lambda (t,s)\right) \int_{s}^{s+\delta (s)} e^{\lambda (s-\theta)} D_r^i Y(\theta)d\theta
+\sum\limits_{j=1}^{m}\left( g_{z_j}\left(\Lambda (t,s)\right)D_r^i Z_j (t,s)\right.\\
~~~~~~~~~~~~+g_{\xi_j}\left(\Lambda (t,s)\right)D_r^i Z_j (s,t)
+g_{\beta_j}\left(\Lambda (t,s)\right)D_r^i Z_j (t,s+\zeta(s))
\\
~~~~~~~~~~~~+g_{\gamma_j}\left(\Lambda (t,s)\right)D_r^i Z_j (s+\zeta(s),t)
+g_{\nu_j}\left(\Lambda (t,s)\right) \int_{s}^{s+\zeta (s)} e^{\lambda (s-\theta)}D_r^i Z_j (t, \theta)d\theta\\
~~~~~~~~~~~~\left.\left.+g_{\psi_j}\left(\Lambda (t,s)\right) \int_{s}^{s+\zeta (s)} e^{\lambda (s-\theta)}D_r^i Z_j (\theta, t)d\theta  \right)\right\}ds
 -\int_{t}^{T} D_r^i Z(t, s) dW(s),\\
~~~~~~~~~~~~~  0\leq r< t\leq T,\\
D_r^i Y(t) =D_r^i \varphi (t), \ \ T\leq r< t \leq T+K,\\
D_r^i Z(t, s) =D_r^i \eta(t, s), \ \ (t, s)\in [0, T+K]^2 \backslash [0, T]^2,~~ s\in (r, T+K],
\end{array}
\right.
\end{align}
where $\{Z_j (\cdot,\cdot)\}_{1\leq j \leq m}$ represents the $j$-th column of the matrix $Z(\cdot,\cdot)$.\\
Moreover,
\begin{equation}\label{22}
\begin{aligned}
D_r^i Y(t)=Z_i (t,r)+\int_r^t D_r^i Z (t,s) dW(s),~~~0\leq r< t \leq T+K.
\end{aligned}
\end{equation}
It is easy to observe that
\begin{equation}\label{23}
\begin{aligned}
Z_i (t,r)=\mathbb{E}\left[D_r^i Y(t)~|~\mathcal{F}_r \right],~~~0\leq r< t \leq T+K,~a.s.
\end{aligned}
\end{equation}
On the other hand, this implies that
\begin{align}\label{24}
\left\{\begin{array}{lcl}
Z_i (t,r) =D_r^i \varphi (t)+\int_{r}^{T} \left\{\left[D_r^i g\right]\left(\Lambda (t,s)\right)
+g_y \left(\Lambda (t,s)\right)D_r^i Y(s)+g_\alpha \left(\Lambda (t,s)\right)D_r^i Y(s+\delta (s))\right.\\
~~~~~~~~~~~~+g_\mu \left(\Lambda (t,s)\right) \int_{s}^{s+\delta (s)} e^{\lambda (s-\theta)} D_r^i Y(\theta)d\theta
+\sum\limits_{j=1}^{m}\left( g_{z_j}\left(\Lambda (t,s)\right)D_r^i Z_j (t,s)\right.\\
~~~~~~~~~~~~+g_{\xi_j}\left(\Lambda (t,s)\right)D_r^i Z_j (s,t)
+g_{\beta_j}\left(\Lambda (t,s)\right)D_r^i Z_j (t,s+\zeta(s))
\\
~~~~~~~~~~~~+g_{\gamma_j}\left(\Lambda (t,s)\right)D_r^i Z_j (s+\zeta(s),t)
+g_{\nu_j}\left(\Lambda (t,s)\right) \int_{s}^{s+\zeta (s)} e^{\lambda (s-\theta)}D_r^i Z_j (t, \theta)d\theta\\
~~~~~~~~~~~~\left.\left.+g_{\psi_j}\left(\Lambda (t,s)\right) \int_{s}^{s+\zeta (s)} e^{\lambda (s-\theta)}D_r^i Z_j (\theta, t)d\theta  \right)\right\}ds
 -\int_{r}^{T} D_r^i Z(t, s) dW(s),\\
~~~~~~~~~~~~~  0\leq t\leq r\leq T,\\
D_r^i Y(t) =0, \ \ T\leq t \leq r \leq T+K,\\
D_r^i Z(t, s) =0, \ \ (t, s)\in [0, T+K]^2 \backslash [0, T]^2, ~~s\in [0,r].
\end{array}
\right.
\end{align}
\end{theorem}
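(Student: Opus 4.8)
The plan is to combine a Picard-type approximation for ABSVIE (\ref{20}) with the standard rules of Malliavin calculus for the operators $D_r^i$ --- closedness, the chain rule for composition with $C^1$ maps having uniformly bounded derivatives, and the commutation of $D_r^i$ with conditional expectations, Lebesgue integrals and It\^{o} integrals --- and then to identify the limiting derivative processes with the unique adapted M-solution of the linear ABSVIE (\ref{21}) supplied by Theorem \ref{th1}.

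First I would introduce the approximating sequence $(Y^{(k)}(\cdot),Z^{(k)}(\cdot,\cdot))$ generated by the contraction map $\Pi$ from the proof of Theorem \ref{th1} (equivalently, by freezing all time-advanced arguments at the previous iterate so that each step is the special equation handled in Lemma \ref{lem1}), started from the Malliavin-differentiable pair $(\varphi(\cdot),\eta(\cdot,\cdot))\in\mathbb{M}^2[0,T+K]$. By induction, using Lemma \ref{lem1} together with the Malliavin calculus rules, each $(Y^{(k)},Z^{(k)})$ lies in $\mathbb{M}^2[0,T+K]$; Assumptions (A3)--(A4) are exactly what is needed here, since the uniform bound on all partial derivatives of $g$ and the bound $\sum_{i=1}^m|[D_r^i g]|\le L(t,s)$ with $\sup_t\mathbb{E}[(\int_t^T L(t,s)\,ds)^2]<\infty$ guarantee that the equation solved by $(D_r^i Y^{(k)},D_r^i Z^{(k)})$ is of the form (\ref{1}) with a free term satisfying (\ref{46}) and a linear-in-the-unknowns part satisfying (A1)--(A2), uniformly in $(k,r,i)$. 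Applying the a priori estimates (\ref{7})--(\ref{8}) to the differences of consecutive iterates and to the differences of their Malliavin derivatives then shows that $\{(Y^{(k)},Z^{(k)})\}$ and $\{(D_r^i Y^{(k)},D_r^i Z^{(k)})\}$ are Cauchy in the relevant $L^2$-norms, uniformly in $r$. Since $D_r^i$ is closed, the limit $(Y,Z)$ belongs to $\mathbb{M}^2[0,T+K]$ and its Malliavin derivatives are the $L^2$-limits of $(D_r^i Y^{(k)},D_r^i Z^{(k)})$.

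Next I would apply $D_r^i$ termwise to (\ref{20}): the chain rule on $g(\Lambda(t,s))$ produces $[D_r^i g](\Lambda(t,s))$ together with the sum of $g_y,g_\alpha,g_\mu,g_{z_j},g_{\xi_j},g_{\beta_j},g_{\gamma_j},g_{\nu_j},g_{\psi_j}$ evaluated at $\Lambda(t,s)$ against the corresponding Malliavin derivatives of $Y$ and of the columns $Z_j$, while $D_r^i\int_t^T Z(t,s)\,dW(s)=\int_t^T D_r^i Z(t,s)\,dW(s)$ for $r<t$. Passing to the limit along the approximating sequence shows that $(D_r^i Y,D_r^i Z)$ solves (\ref{21}), an ABSVIE of the form (\ref{1}) on $[r,T+K]$ whose generator satisfies (A1)--(A2); by (the interval version of) Theorem \ref{th1} this equation has a unique adapted M-solution, which is therefore $(D_r^i Y,D_r^i Z)$. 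Relation (\ref{22}) follows by differentiating the M-solution relation (\ref{2}), using $D_r^i\mathbb{E}[Y(t)]=0$ and $D_r^i Z(t,s)=0$ for $s<r$; taking $\mathbb{E}[\,\cdot\,|\,\mathcal{F}_r]$ in (\ref{22}) and using that $Z_i(t,r)$ is $\mathcal{F}_r$-measurable while the remaining It\^{o} integral has zero $\mathcal{F}_r$-conditional mean gives (\ref{23}). Finally, for $0\le t\le r\le T$ the value $Y(t)$ is $\mathcal{F}_t$-measurable, hence $D_r^i Y(t)=0$; substituting this into the $D_r^i$-differentiated first line of (\ref{20}) --- where now $r\in[t,T]$, so $D_r^i\int_t^T Z(t,s)\,dW(s)=Z_i(t,r)+\int_r^T D_r^i Z(t,s)\,dW(s)$ --- and isolating $Z_i(t,r)$ yields (\ref{24}), the boundary relations $D_r^i Y(t)=0$ for $T\le t\le r$ and $D_r^i Z(t,s)=0$ for $(t,s)\notin[0,T]^2$, $s\le r$, being inherited from $(\varphi,\eta)\in\mathbb{M}^2[0,T+K]$ and the measurability of the data.

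I expect the main obstacle to be the first limiting step, namely showing that the Picard iterates stay bounded in and converge with respect to the graph norm of $D_r^i$ uniformly in $r$. This requires re-running the fixed-point estimates of Step 1 in the proof of Theorem \ref{th1} for the derivative equations while controlling the extra contributions from the time-advanced $Z$-terms $g_{\beta_j}D_r^i Z_j(t,s+\zeta(s))$, $g_{\gamma_j}D_r^i Z_j(s+\zeta(s),t)$ and the two averaged terms $\int_s^{s+\zeta(s)}e^{\lambda(s-\theta)}D_r^i Z_j(t,\theta)\,d\theta$, $\int_s^{s+\zeta(s)}e^{\lambda(s-\theta)}D_r^i Z_j(\theta,t)\,d\theta$; Assumption (b) on $\delta(\cdot),\zeta(\cdot)$ together with the Cauchy--Schwarz bound on the averaging kernel (exactly as in that Step 1) is what makes this close. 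Once differentiability is secured, identifying the limit with the adapted M-solution of (\ref{21}) and deriving (\ref{22})--(\ref{24}) is routine bookkeeping.
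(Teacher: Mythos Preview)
Your proposal is correct and follows essentially the same route as the paper: Picard iteration via the contraction map $\Pi$ on a small interval, Malliavin differentiability of each iterate (the paper invokes Proposition~5.3 of \cite{Karoui1997}), the stability estimate (\ref{8}) to control the derivative iterates, and closedness of $D_r$ to pass to the limit. The only structural difference is that the paper first writes down the adapted M-solution $(\tilde Y^r,\tilde Z^r)$ of the linear ABSVIE (\ref{21}) and compares $(D_r Y_{p+1},D_r Z_{p+1})$ directly to this target, obtaining a recursion of the form $\|\,\cdot\,\|\le \kappa_p + C(T+K-H)^2\|\,\cdot\,\|_{\text{prev}}$ with $\kappa_p\to 0$ by dominated convergence, whereas you plan to compare consecutive iterates; both arguments close under the same smallness condition on $T+K-H$ and then extend by the induction procedure of Theorem~\ref{th1}.
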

\begin{proof}
For the sake of convenience, we only need to prove the case of $n=m=1$.

It should be pointed out that Assumptions $(A3)$-$(A4)$ are stronger than Assumptions $(A1)$-$(A2)$,
for any given $(\varphi (\cdot),\eta(\cdot, \cdot))\in \mathbb{M}^2 [0,T+K]$,
ABSVIE (\ref{20}) has a unique adapted M-solution $(Y (\cdot), Z (\cdot,\cdot))\in\mathcal{M}^2 [0,T+K]$
according to Theorem \ref{th1}.
Thus, there exists a unique adapted M-solution $(\tilde{Y}^{r} (\cdot), \tilde{Z}^{r} (\cdot,\cdot))\in\mathcal{M}^2 [0,T+K]$ satisfying the linear ABSVIE:
\begin{align}\label{25}
\left\{\begin{array}{lcl}
\tilde{Y}^r (t) =D_r \varphi (t)+\int_{t}^{T} \left\{\left[D_r g\right]\left(\Lambda (t,s)\right)
+g_y \left(\Lambda (t,s)\right)\tilde{Y}^r (s)+g_\alpha \left(\Lambda (t,s)\right)\tilde{Y}^r (s+\delta (s))\right.\\
~~~~~~~~~~+g_\mu \left(\Lambda (t,s)\right) \int_{s}^{s+\delta (s)} e^{\lambda (s-\theta)} \tilde{Y}^r (\theta)d\theta
+ g_{z}\left(\Lambda (t,s)\right)\tilde{Z}^r (t,s)
+g_{\xi}\left(\Lambda (t,s)\right)\tilde{Z}^r (s,t)\\
~~~~~~~~~~
+g_{\beta}\left(\Lambda (t,s)\right)\tilde{Z}^r (t,s+\zeta(s))
+g_{\gamma}\left(\Lambda (t,s)\right)\tilde{Z}^r (s+\zeta(s),t)
\\
~~~~~~~~~~
\left.+g_{\nu}\left(\Lambda (t,s)\right) \int_{s}^{s+\zeta (s)} e^{\lambda (s-\theta)}\tilde{Z}^r (t, \theta)d\theta
+g_{\psi}\left(\Lambda (t,s)\right) \int_{s}^{s+\zeta (s)} e^{\lambda (s-\theta)}\tilde{Z}^r (\theta, t)d\theta  \right\}ds\\
~~~~~~~~~~ -\int_{t}^{T} \tilde{Z}^r (t, s) dW(s),~  0\leq r< t\leq T,\\
\tilde{Y}^r (t) =D_r \varphi (t), \ \ T\leq r< t \leq T+K,\\
\tilde{Z}^r (t, s) =D_r \eta(t, s), \ \ (t, s)\in [0, T+K]^2 \backslash [0, T]^2, ~~ s\in (r, T+K].
\end{array}
\right.
\end{align}

Next, by Theorem \ref{th1}, we know that if $T-H >0$ is small enough,
then $\Pi$ defined by (\ref{47}) is a contractive map on the Banach space $\mathcal{M}^2 [H,T+K]$.
Letting $(Y_0 (\cdot), Z_0 (\cdot,\cdot))=0$, we can construct a Picard iteration sequence in $\mathcal{M}^2 [H,T+K]$
as follows:
\begin{align*}
(Y_{p+1} (\cdot), Z_{p+1} (\cdot,\cdot))=\Pi (Y_{p} (\cdot), Z_{p} (\cdot,\cdot)),~ p=0,1,2,\cdots.
\end{align*}
Then $\{(Y_{p} (\cdot), Z_{p} (\cdot,\cdot))\}_{p=0}^{\infty}$ converges to the
limit $(Y (\cdot), Z (\cdot,\cdot))$, which is the unique adapted M-solution
to ABSVIE (\ref{20}) such that
\begin{equation}\label{28}
\begin{aligned}
\lim\limits_{p\rightarrow \infty}\|(Y_p (\cdot),Z_p (\cdot,\cdot))-(Y(\cdot),Z(\cdot,\cdot))\|_{\mathcal{M}^2 [H,T+K]}=0.
\end{aligned}
\end{equation}
If $\{(Y_{p} (\cdot), Z_{p} (\cdot,\cdot))\}_{p=0}^{\infty}$ satisfies the following ABSVIE:
\begin{align}\label{57}
\left\{\begin{array}{lcl} Y_{p+1} (t) = \varphi (t)+\int_{t}^{T} g\left( \Lambda_p (t,s) \right)ds-\int_{t}^{T} Z_{p+1}(t, s) dW(s),\ \ \
  t\in [H, T],\\
Y_{p+1}(t) = \varphi (t), \ \ t\in [T, T+K],\\
Z_{p+1}(t, s) = \eta(t, s), \ \ (t, s)\in [H, T+K]^2 \backslash [H, T]^2,
\end{array}
\right.
\end{align}
where
\begin{align*}
\Lambda_p (t,s)&:= \left( t, s, Y_p (s), Z_{p}(t, s), Z_p (s, t), Y_p (s+\delta (s)),
Z_{p}(t, s+\zeta(s)), Z_p (s+\zeta(s), t),\right.\\
&~~~ \left. \int_{s}^{s+\delta (s)} e^{\lambda (s-\theta)} Y_p (\theta)d\theta,
\int_{s}^{s+\zeta (s)} e^{\lambda (s-\theta)} Z_{p}(t, \theta)d\theta,  \int_{s}^{s+\zeta (s)} e^{\lambda (s-\theta)} Z_p (\theta, t)d\theta  \right),
\end{align*}
then by induction, we shall prove that
\begin{align*}
(Y_{p} (\cdot),Z_{p}(\cdot, \cdot))\in
\mathbb{Y}^2 [H, T+K]\times \mathbb{Z}^2 [H, T+K],~~p=0,1,2,\cdots.
\end{align*}
Indeed, suppose that $(Y_{p} (\cdot),Z_{p}(\cdot, \cdot))\in
\mathbb{Y}^2 [H, T+K]\times \mathbb{Z}^2 [H, T+K]$,
and then we shall verify that $(Y_{p+1} (\cdot),Z_{p+1}(\cdot, \cdot))\in
\mathbb{Y}^2 [H, T+K]\times \mathbb{Z}^2 [H, T+K]$.
Since $(\varphi (\cdot),\eta(\cdot, \cdot))\in \mathbb{M}^2 [H,T+K]$ and $(Y_{p} (\cdot),Z_{p}(\cdot, \cdot))\in
\mathbb{Y}^2 [H, T+K]\times \mathbb{Z}^2 [H, T+K]$, it follows that
\begin{align*}
\varphi (t)+\int_{t}^{T} g\left( \Lambda_p (t,s) \right)ds\in \mathbb{D}_{1,2},~~t\in [H, T].
\end{align*}
Thus,
\begin{align*}
Y_{p+1} (t) =\mathbb{E}\left[\varphi (t)+\int_{t}^{T} g\left( \Lambda_p (t,s) \right)ds ~\bigg|~ \mathcal{F}_t \right]\in \mathbb{D}_{1,2},~~t\in [H, T].
\end{align*}
From (\ref{57}) we have the following:
\begin{align*}
\int_{t}^{T} Z_{p+1}(t, s) dW(s) =\varphi (t)+\int_{t}^{T} g\left( \Lambda_p (t,s) \right)ds-Y_{p+1} (t)\in \mathbb{D}_{1,2},~~t\in [H, T].
\end{align*}
It then follows from Lemma 5.1 in \cite{Karoui1997} that $Z_{p+1}(\cdot,\cdot)\in \mathbb{Z}^2 [H, T]$.
Note that $Y_{p+1}(t) = \varphi (t)$ for any $t\in [T, T+K]$, and
$Z_{p+1}(t, s) = \eta(t, s)$ for any $(t, s)\in [H, T+K]^2 \backslash [H, T]^2$, where
$(\varphi (\cdot),\eta(\cdot, \cdot))\in \mathbb{M}^2 [H,T+K]$.
This implies that $Z_{p+1}(\cdot,\cdot)\in \mathbb{Z}^2 [H, T+K]$,
and by Lemma 5.1 in \cite{Karoui1997}, we can deduce that
\begin{align}\label{26}
\left\{\begin{array}{lcl}
D_r Y_{p+1}(t) =D_r \varphi (t)+\int_{t}^{T} \left\{\left[D_r g\right]\left(\Lambda_p (t,s)\right)
+g_y \left(\Lambda_p (t,s)\right)D_r Y_p (s)+g_\alpha \left(\Lambda_p (t,s)\right)D_r Y_p (s+\delta (s))\right.\\
~~~~~~~~~~~~~~~+g_\mu \left(\Lambda_p (t,s)\right) \int_{s}^{s+\delta (s)} e^{\lambda (s-\theta)} D_r Y_p (\theta)d\theta
+ g_{z}\left(\Lambda_p (t,s)\right)D_r Z_{p} (t,s)\\
~~~~~~~~~~~~~~~+g_{\xi}\left(\Lambda_p (t,s)\right)D_r Z_p (s,t)
+g_{\beta}\left(\Lambda_p (t,s)\right)D_r Z_{p} (t,s+\zeta(s))
\\
~~~~~~~~~~~~~~~+g_{\gamma}\left(\Lambda_p (t,s)\right)D_r Z_p (s+\zeta(s),t)
+g_{\nu}\left(\Lambda_p (t,s)\right) \int_{s}^{s+\zeta (s)} e^{\lambda (s-\theta)}D_r Z_{p} (t, \theta)d\theta\\
~~~~~~~~~~~~~~~\left.+g_{\psi}\left(\Lambda_p (t,s)\right) \int_{s}^{s+\zeta (s)} e^{\lambda (s-\theta)}D_r Z_p (\theta, t)d\theta  \right\}ds
 -\int_{t}^{T} D_r Z_{p+1}(t, s) dW(s),\\
~~~~~~~~~~~~~~~~  H\leq r< t\leq T,\\
D_r Y_{p+1}(t) =D_r \varphi (t), \ \ T\leq r< t \leq T+K,\\
D_r Z_{p+1}(t, s) =D_r \eta(t, s), \ \ (t, s)\in [H, T+K]^2 \backslash [H, T]^2, ~~ s\in (r, T+K].
\end{array}
\right.
\end{align}
Under Assumptions $(A3)$-$(A4)$, from Theorem \ref{th1}, it is easy to see that if $T-H >0$ is small enough,
then (\ref{26}) admits a unique adapted M-solution $(D_r Y_{p+1} (\cdot), D_r Z_{p+1} (\cdot,\cdot))\in\mathcal{M}^2 [H,T+K]$.
Hence, $(Y_{p} (\cdot),Z_{p}(\cdot, \cdot))\in
\mathbb{Y}^2 [H, T+K]\times \mathbb{Z}^2 [H, T+K],~p=0,1,2,\cdots$.

Now, let us show that $\{(Y_{p} (\cdot), Z_{p} (\cdot,\cdot))\}_{p=0}^{\infty}$ converges to $(\tilde{Y}^{r} (\cdot), \tilde{Z}^{r} (\cdot,\cdot))$ in $\mathcal{M}^2 [H,T+K]$.
Notice that $D_r Y_{p+1} (t)- \tilde{Y}^{r}(t)=0$ for any $t\in [T, T+K]$, and $D_r Z_{p+1} (t,s)- \tilde{Z}^{r} (t,s)=0$ for any $(t,s)\in [H, T+K]^2 \backslash [H, T]^2$.
From the stability estimate (\ref{8}), by combing (\ref{25}) with (\ref{26}) we have the following:
\begin{equation*}
\begin{aligned}
&~~~\|(D_r Y_{p+1} (\cdot), D_r Z_{p+1} (\cdot,\cdot))-(\tilde{Y}^{r} (\cdot), \tilde{Z}^{r} (\cdot,\cdot))\|_{\mathcal{M}^2 [H,T+K]}^2\\
&:=\mathbb{E}\left[\int_H^{T+K}  |D_r Y_{p+1} (t)-\tilde{Y}^r (t)|^2 dt+\int_H^{T+K} \int_t^{T+K} |D_r Z_{p+1} (t,s)-\tilde{Z}^r (t,s)|^2 dsdt\right]\\
&\leq C \mathbb{E}\left\{
\int_H^T \left[\int_t^T  \left( \left|\left[D_r g\right]\left(\Lambda_p (t,s)\right)-\left[D_r g\right]\left(\Lambda (t,s)\right) \right|
+\left|g_y \left(\Lambda_p (t,s)\right)-g_y \left(\Lambda (t,s)\right)\right| \left|\tilde{Y}^r (s)\right|\right.\right.\right.\\
&~~~+\left|g_z \left(\Lambda_p (t,s)\right)-g_z \left(\Lambda (t,s)\right)\right| \left|\tilde{Z}^r (t,s)\right|
+\left|g_\xi \left(\Lambda_p (t,s)\right)-g_\xi \left(\Lambda (t,s)\right)\right| \left|\tilde{Z}^r (s,t)\right|\\
&~~~+\left|g_\alpha \left(\Lambda_p (t,s)\right)-g_\alpha \left(\Lambda (t,s)\right)\right| \left|\tilde{Y}^r (s+\delta(s))\right|
+\left|g_\beta \left(\Lambda_p (t,s)\right)-g_\beta \left(\Lambda (t,s)\right)\right| \left|\tilde{Z}^r (t,s+\zeta(s))\right|\\
&~~~+\left|g_\gamma \left(\Lambda_p (t,s)\right)-g_\gamma \left(\Lambda (t,s)\right)\right| \left|\tilde{Z}^r (s+\zeta(s),t)\right|
+\left|g_\mu \left(\Lambda_p (t,s)\right)-g_\mu \left(\Lambda (t,s)\right)\right| \\
&~~~\cdot \int_{s}^{s+\delta (s)} e^{\lambda (s-\theta)}\left|\tilde{Y}^r (\theta)\right|d\theta
+\left|g_\nu \left(\Lambda_p (t,s)\right)-g_\nu \left(\Lambda (t,s)\right)\right|
\int_{s}^{s+\zeta (s)} e^{\lambda (s-\theta)}\left|\tilde{Z}^r (t,\theta)\right|d\theta
\end{aligned}
\end{equation*}
\begin{equation}\label{27}
\begin{aligned}
&~~~\left.\left.\left.+\left|g_\psi \left(\Lambda_p (t,s)\right)-g_\psi \left(\Lambda (t,s)\right)\right|
\int_{s}^{s+\zeta (s)} e^{\lambda (s-\theta)}\left|\tilde{Z}^r (\theta,t)\right|d\theta\right)
ds\right]^2dt\right\}\\
&~~~+C(T-H)^2 \|(D_r Y_{p} (\cdot), D_r Z_{p} (\cdot,\cdot))-(\tilde{Y}^{r} (\cdot), \tilde{Z}^{r} (\cdot,\cdot))\|_{\mathcal{M}^2 [H,T+K]}^2 \\
&:=\kappa_p+C(T-H)^2 \|(D_r Y_{p} (\cdot), D_r Z_{p} (\cdot,\cdot))-(\tilde{Y}^{r} (\cdot), \tilde{Z}^{r} (\cdot,\cdot))\|_{\mathcal{M}^2 [H,T+K]}^2.
\end{aligned}
\end{equation}
Let $T-H>0$ be small enough such that
\begin{equation*}
\begin{aligned}
C(T-H)^2<1.
\end{aligned}
\end{equation*}
Thus, under Assumptions $(A3)$-$(A4)$, by the dominated convergence theorem and (\ref{28})
one can show that
\begin{equation*}
\begin{aligned}
\lim\limits_{p\rightarrow\infty}\kappa_p=0.
\end{aligned}
\end{equation*}
By (\ref{27}) it is directly concluded that
\begin{equation*}
\begin{aligned}
\lim\limits_{p\rightarrow\infty}\|(D_r Y_{p} (\cdot), D_r Z_{p} (\cdot,\cdot))-(\tilde{Y}^{r} (\cdot), \tilde{Z}^{r} (\cdot,\cdot))\|_{\mathcal{M}^2 [H,T+K]}=0.
\end{aligned}
\end{equation*}
By the way, $D_r$ is a closed operator, which means that
\begin{equation*}
\begin{aligned}
\tilde{Y}^r (t)=D_r Y (t),~~\tilde{Z}^r (t,s)=D_r Z(t,s),~~~t,s\in [H,T+K], a.s.,
\end{aligned}
\end{equation*}
and (\ref{21})-(\ref{24}) hold when $T-H>0$ is small enough.
By induction, the proof
of (\ref{21})-(\ref{24}) follows immediately from Theorem \ref{th1}.
\end{proof}

%

\section{Nonzero-sum stochastic differential games}\label{sec6}
In this section, as an application of ABSVIEs, we study a nonzero-sum
differential game system of SDVIEs with the aim of
establishing Pontryagin's type maximum principle.

The state of game systems is modelled by the following
SDVIE:
\begin{align}\label{29}
\left\{\begin{array}{lcl} X(t) = \phi (t)+\int_{0}^{t} b\left( t, s, X(s), X(s-\delta),
u_1 (s), u_1 (s-\delta_1), u_2 (s), u_2 (s-\delta_2)\right)ds\\
\ \ \ \ \ \ \  \ \ \ +\int_{0}^{t} \sigma \left( t, s, X(s),
u_1 (s),  u_2 (s)\right) dW(s),\ \
  t\in [0, T],\\
X(t) = \phi (t), \ \ t\in [-\delta, 0],\\
u_1(t) = \vartheta_1 (t), \ \ t\in [-\delta_1, 0],\\
u_2(t) = \vartheta_2 (t), \ \ t\in [-\delta_2, 0],
\end{array}
\right.
\end{align}
where $\phi(\cdot)\in L_{\mathbb{F}}^{2} (-\delta,T; \mathbb{R})$, and
$\vartheta_i (\cdot)\in L_{\mathbb{F}}^{2} (-\delta_i,0; \mathbb{R})$, $i=1,2$.
$b: \Omega\times \Delta^c \times \mathbb{R} \times\mathbb{R}\times\mathbf{U}_1
\times\mathbf{U}_1\times\mathbf{U}_2\times\mathbf{U}_2
\mapsto\mathbb{R}$ is $\mathcal{F}_T\otimes\mathcal{B}(\Delta^c\times \mathbb{R} \times\mathbb{R}\times\mathbf{U}_1
\times\mathbf{U}_1\times\mathbf{U}_2\times\mathbf{U}_2)$-measurable
such that $s\mapsto b(t,s, x, x_\delta, u_1, u_{1\delta_1}, u_2, u_{2\delta_2})
$ is $\mathbb{F}$-progressively measurable
for all $(t,x, x_\delta, u_1, u_{1\delta_1}, u_2, u_{2\delta_2})\in [0,T]\times \mathbb{R}\times\mathbb{R}\times\mathbf{U}_1
\times\mathbf{U}_1\times\mathbf{U}_2\times\mathbf{U}_2$. $\sigma: \Omega\times \Delta^c \times \mathbb{R} \times\mathbf{U}_1
\times\mathbf{U}_2
\mapsto\mathbb{R}$ is $\mathcal{F}_T\otimes\mathcal{B}(\Delta^c\times \mathbb{R} \times\mathbf{U}_1
\times\mathbf{U}_2)$-measurable
such that $s\mapsto
\sigma (t,s,x, u_1, u_2)$ is $\mathbb{F}$-progressively measurable
for all $(t,x,u_1,  u_2)\in [0,T]\times \mathbb{R}\times\mathbf{U}_1
\times\mathbf{U}_2$.	
Here, $\mathbf{U}_i$ is a non-empty convex subset of $\mathbb{R}$
and $u_i(\cdot)$ is the control process
of Player $i$, $i=1,2$.
Moreover, the constants $\delta,\delta_1,\delta_2>0$ are given finite time delays
for processes $X(\cdot),u_1(\cdot),u_2(\cdot)$, respectively,
and there exists a constant $K\geq0$ such that $0<\delta,\delta_1,\delta_2\leq K$.

For $i=1,2$, we define the admissible control set for Player $i$ by
\begin{align*}
\mathcal{A}_i =\left\{u_i: \Omega\times[0,T]\mapsto \mathbf{U}_i ~\Big|~ u_i(\cdot)~\mbox{is}~\mathbb{F}\mbox{-progressively measurable,}~\mathbb{E}\left[\int_0^T |u_i(t)|^2 dt \right]<\infty\right\}.
\end{align*}
Each element of $\mathcal{A}_i$ is said to be an admissible control for Player $i$, and
$\mathcal{A}_1 \times\mathcal{A}_2$ the set of admissible controls for the players.

The performance functional is defined by
\begin{align}\label{30}
J_i(u_1(\cdot),u_2(\cdot))=\mathbb{E}\left[\int_{0}^{T}
h_i(t,X(t),X(t-\delta),
u_1 (t), u_1 (t-\delta_1), u_2 (t), u_2 (t-\delta_2))dt\right],~i=1,2,
\end{align}
where
$
h_i: \Omega\times [0, T] \times \mathbb{R} \times\mathbb{R}\times\mathbf{U}_1
\times\mathbf{U}_1\times\mathbf{U}_2\times\mathbf{U}_2
\mapsto\mathbb{R}
$
is an $\mathcal{F}_T\otimes\mathcal{B}([0,T]\times \mathbb{R} \times\mathbb{R}\times\mathbf{U}_1
\times\mathbf{U}_1\times\mathbf{U}_2\times\mathbf{U}_2)$-measurable map such that
\begin{equation*}
\begin{aligned}
&\mathbb{E}\left[\int_{0}^{T}
\left|h_i (t,X(t),X(t-\delta),
u_1 (t), u_1 (t-\delta_1), u_2 (t), u_2 (t-\delta_2))\right|dt\right]
<\infty,\\
&\forall~ (u_1 (\cdot),u_2 (\cdot))\in \mathcal{A}_1 \times\mathcal{A}_2.
\end{aligned}
\end{equation*}
In addition, let $b$, $\sigma$ and $h_i~(i=1,2)$ satisfy the following assumptions:
\begin{itemize}
\item[(A5)] $b(\cdot, \cdot, 0,0,0,0,0,0), \sigma(\cdot, \cdot, 0,
0,0) \in L_\mathbb{F}^2 (\Delta^c; \mathbb{R})$.
For almost all $(\omega,t,s)\in \Omega\times\Delta^c$, the maps
$(x, x_\delta, u_1,$  $u_{1\delta_1}, u_2, u_{2\delta_2})\mapsto
b(\omega,t,s,x, x_\delta, u_1, u_{1\delta_1}, u_2, u_{2\delta_2})$ and
$(x, u_1, u_2)\mapsto
\sigma (\omega,t,s,x,$ $u_1, u_2)$ are continuously differentiable,
and their partial derivatives $b_x$, $b_{x_\delta}$, $b_{u_1}$, $b_{u_2}$, $b_{u_{1\delta_1}}$,
$b_{u_{2\delta_2}}$, $\sigma_x$, $\sigma_{u_1}$ and $\sigma_{u_2}$ are uniformly bounded.
\item[(A6)]
For almost all $(\omega,t)\in \Omega\times[0,T]$, the map
$(x, x_\delta, u_1, u_{1\delta_1}, u_2, u_{2\delta_2}) \mapsto
h_i(\omega,t, x, x_\delta, u_1, u_{1\delta_1},$ $u_2,u_{2\delta_2})$
is continuously differentiable,
and there exists a constant $C>0$ such that
\begin{equation*}
\begin{aligned}
&|h_{ia}(\omega,t, x, x_\delta, u_1, u_{1\delta_1},u_2,u_{2\delta_2})|\leq
C(1+|x|+|x_\delta|+|u_1|+|u_{1\delta_1}|+|u_2|+|u_{2\delta_2}|),\\
&\forall~(\omega,t, x, x_\delta, u_1, u_{1\delta_1}, u_2, u_{2\delta_2})\in \Omega\times[0,T]\times\mathbb{R} \times\mathbb{R}\times\mathbf{U}_1
\times\mathbf{U}_1\times\mathbf{U}_2\times\mathbf{U}_2,
\end{aligned}
\end{equation*}
where $a=x, x_\delta, u_1, u_{1\delta_1},u_2,u_{2\delta_2}$.
\end{itemize}

Similar to the proof of Theorem 3.10 in \cite{Wang2024b} (or the proof of Theorem 2.3 in \cite{Shi2013}), and from the standard Banach contraction mapping principle, we can obtain that under Assumption $(A5)$, for any given
$\phi(\cdot)\in L_{\mathbb{F}}^{2} (-\delta,T; \mathbb{R})$,
$\vartheta_i(\cdot)\in L_{\mathbb{F}}^{2} (-\delta_i,0; \mathbb{R})~(i=1,2)$
and $(u_1(\cdot),u_2(\cdot))\in\mathcal{A}_1\times\mathcal{A}_2$, SDVIE (\ref{29})
admits a unique adapted solution $X(\cdot)\in L_\mathbb{F}^2 (-\delta,T; \mathbb{R})$.
Now, we can establish the following nonzero-sum differential game
problem driven by SDVIE (\ref{29}).
\begin{problem}\label{prob1}
Find a pair of admissible controls
$(u_1^* (\cdot), u_2^* (\cdot))\in \mathcal{A}_1\times\mathcal{A}_2$ such that
\begin{align}\label{31}
\left\{\begin{array}{lcl} J_1(u_1^* (\cdot), u_2^*(\cdot)) = \inf\limits_{u_1(\cdot)\in\mathcal{A}_1}
J_1(u_1(\cdot), u_2^* (\cdot)),\\
J_2(u_1^*(\cdot), u_2^* (\cdot)) = \inf\limits_{u_2(\cdot)\in\mathcal{A}_2} J_2(u_1^* (\cdot), u_2(\cdot)).
\end{array}
\right.
\end{align}
\end{problem}
\noindent If the pair of admissible controls $(u_1^* (\cdot), u_2^* (\cdot))$ satisfies (\ref{31}), it is then called
a Nash equilibrium point of Problem \ref{prob1},
and the corresponding $X^* (\cdot)$ is said to be an optimal
state process. As usual, $(X^* (\cdot), u^*_1 (\cdot), u^*_2 (\cdot))$ is called an optimal triple of Problem \ref{prob1}.

To prove the maximum principle for Problem \ref{prob1},
we first discuss the duality principle between linear SDVIEs and linear ABSVIEs.
In the following Theorem \ref{th4}, for $i=1,2,3$, it should be mentioned that $A_i (\cdot,\cdot)$ is defined on $[0, T]^2$, then $A_i (t,s)=0$ for any $(t,s)\notin [0, T]^2$.
\begin{theorem}\label{th4}
For $i=1,2,3$, let $A_i (\cdot,\cdot)\in L^\infty(0,T; L_{\mathbb{F}}^{\infty} (0,T; \mathbb{R}))$, $\phi(\cdot)\in L_{\mathbb{F}}^{2} (0,T; \mathbb{R})$ and $\varphi(\cdot)\in L_{\mathcal{F}_T}^{2} (0,T; \mathbb{R})$.
Assume that $X(\cdot)\in L_\mathbb{F}^2 (-\delta,T;\mathbb{R})$ is the adapted solution
to the linear SDVIE:
\begin{align}\label{32}
\left\{\begin{array}{lcl} X(t) = \phi (t)+\int_{0}^{t}\left( A_1(t,s)X(s)+A_2(t,s)X(s-\delta)\right)ds+\int_{0}^{t}  A_3(t,s)X(s)  dW(s),\
  t\in [0, T],\\
X(t) = 0, \ \ t\in [-\delta, 0],
\end{array}
\right.
\end{align}
and $(Y(\cdot), Z(\cdot,\cdot))\in \mathcal{M}^2 [0,T+\delta]$ is the adapted M-solution
to the linear ABSVIE:
\begin{align}\label{33}
\left\{\begin{array}{lcl} Y(t) = \varphi (t)+\int_{t}^{T}\left( A_1(s,t)Y(s)+\mathbb{E}\left[A_2(s+\delta,t+\delta)Y(s+\delta)1_{[0,T-\delta]^2}(s,t)\big|\mathcal{F}_s\right]\right.\\
\ \ \ \ \ \ \  \  \
 \left.+A_3(s,t)Z(s,t)\right)ds-\int_{t}^{T}  Z(t,s) dW(s),\ \
  t\in [0, T],\\
Y(t) = 0, \ \ t\in [T, T+\delta],\\
Z(t,s)=0, \ \ (t, s)\in [0, T+\delta]^2 \backslash [0, T]^2.
\end{array}
\right.
\end{align}
Then
\begin{equation}\label{34}
\begin{aligned}
\mathbb{E}\left[\int_{0}^{T}\varphi(t)X(t)dt \right]=\mathbb{E}\left[\int_{0}^{T}\phi(t)Y(t)dt \right].
\end{aligned}
\end{equation}
\end{theorem}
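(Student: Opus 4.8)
The plan is to start from the right-hand side of (\ref{34}), that is $\mathbb{E}\int_0^T\phi(t)Y(t)\,dt$, substitute for $\phi(t)$ by means of the forward equation (\ref{32}), and then repeatedly apply Fubini's theorem, the delay changes of variables, the adjoint equation (\ref{33}) and the M-solution property (\ref{2}) until the right-hand side of (\ref{34}) has been turned into its left-hand side $\mathbb{E}\int_0^T\varphi(t)X(t)\,dt$. From (\ref{32}) one writes $\phi(t)=X(t)-\int_0^t\bigl(A_1(t,s)X(s)+A_2(t,s)X(s-\delta)\bigr)ds-\int_0^t A_3(t,s)X(s)\,dW(s)$, multiplies by $Y(t)$, takes expectation and integrates in $t$ over $[0,T]$.

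First I would dispose of the It\^o-integral contribution. By (\ref{2}) the random variable $Y(t)$ is $\mathcal{F}_t$-measurable and satisfies $Y(t)=\mathbb{E}[Y(t)]+\int_0^t Z(t,s)\,dW(s)$ for a.e.\ $t$, so the It\^o isometry yields $\mathbb{E}\bigl[Y(t)\int_0^t A_3(t,s)X(s)\,dW(s)\bigr]=\mathbb{E}\int_0^t A_3(t,s)X(s)Z(t,s)\,ds$, the deterministic part $\mathbb{E}[Y(t)]$ contributing nothing. This is the only place where $Z$ enters, and it produces the cross term $-\mathbb{E}\int_0^T\!\int_0^t A_3(t,s)X(s)Z(t,s)\,ds\,dt$. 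For the two remaining Lebesgue double integrals I would interchange the order of integration; in the $A_2$-term I would additionally substitute $s\mapsto s+\delta$ in the outer integral and $t\mapsto t+\delta$ in the inner one, and then use $X\equiv 0$ on $[-\delta,0]$, $Y\equiv 0$ on $[T,T+\delta]$ and $Z\equiv 0$ off $[0,T]^2$ to reinstate the original integration limits. The effect is that these two terms combine into $\mathbb{E}\int_0^T X(t)\Bigl[\int_t^T\bigl(A_1(s,t)Y(s)+A_2(s+\delta,t+\delta)Y(s+\delta)\bigr)ds\Bigr]dt$, which is precisely why the adjoint equation (\ref{33}) carries the transposed coefficient $A_1(s,t)$ and the doubly shifted coefficient $A_2(s+\delta,t+\delta)$.

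At this stage I would invoke (\ref{33}) itself: the bracket above equals $Y(t)-\varphi(t)-\int_t^T A_3(s,t)Z(s,t)\,ds+\int_t^T Z(t,s)\,dW(s)$. Substituting this back, the two occurrences of $\mathbb{E}\int_0^T X(t)Y(t)\,dt$ cancel; the term $\mathbb{E}\int_0^T X(t)\int_t^T Z(t,s)\,dW(s)\,dt$ vanishes because $X(t)$ is $\mathcal{F}_t$-measurable and the inner integral has zero conditional expectation given $\mathcal{F}_t$; and one is left with $\mathbb{E}\int_0^T\varphi(t)X(t)\,dt$ together with the two $A_3$-cross terms $\mathbb{E}\int_0^T X(t)\int_t^T A_3(s,t)Z(s,t)\,ds\,dt$ and $-\mathbb{E}\int_0^T\!\int_0^t A_3(t,s)X(s)Z(t,s)\,ds\,dt$. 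A final application of Fubini together with a relabelling of the pair of variables shows $\mathbb{E}\int_0^T X(t)\int_t^T A_3(s,t)Z(s,t)\,ds\,dt=\mathbb{E}\int_0^T\!\int_0^t A_3(t,s)X(s)Z(t,s)\,ds\,dt$, so these cancel and (\ref{34}) follows.

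The main obstacle I anticipate is the bookkeeping for the delayed terms: one must check that after the two shifts the coefficient of $X(\cdot)$ in the reorganized integral is exactly $A_2(s+\delta,t+\delta)$ evaluated inside $[0,T]^2$ --- legitimate precisely because $Y(s+\delta)$ already vanishes once $s>T-\delta$ --- and that each Fubini and change-of-variables step is justified, which follows routinely from $A_i\in L^\infty$, $X\in L_{\mathbb{F}}^2(-\delta,T;\mathbb{R})$, $(Y,Z)\in\mathcal{M}^2[0,T+\delta]$ and the Cauchy--Schwarz inequality. The cancellation of the two $A_3$-terms is the ``diagonal'' identity characteristic of M-solution duality and is really the conceptual heart of the argument; once the indices are lined up it is immediate.
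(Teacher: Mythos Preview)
Your proposal is correct and follows essentially the same route as the paper's proof: substitute $\phi(t)$ via (\ref{32}), use the M-solution identity $Y(t)=\mathbb{E}[Y(t)]+\int_0^t Z(t,s)\,dW(s)$ together with the It\^o isometry for the $A_3$-term, apply Fubini plus the double shift $s\mapsto s+\delta$, $t\mapsto t+\delta$ (with the boundary conditions $X\equiv 0$ on $[-\delta,0]$ and $Y\equiv 0$ on $[T,T+\delta]$) for the $A_2$-term, and then invoke (\ref{33}) to collapse everything to $\mathbb{E}\int_0^T\varphi(t)X(t)\,dt$. The only cosmetic difference is that the paper rewrites the $A_3$-cross term as $\mathbb{E}\int_0^T X(t)\int_t^T A_3(s,t)Z(s,t)\,ds\,dt$ \emph{before} invoking (\ref{33}), so that all three pieces match the generator of (\ref{33}) at once, whereas you keep it in its original form and cancel it against the $A_3$-term emerging from (\ref{33}) at the end; the underlying Fubini/relabelling identity is identical.
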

\begin{proof}
By (\ref{32}), we can show that
\begin{equation}\label{35}
\begin{aligned}
&~~~\mathbb{E}\left[\int_{0}^{T}\phi(t)Y(t)dt \right]\\
&=\mathbb{E}\left[\int_{0}^{T}X(t)Y(t)dt \right]
-\mathbb{E}\left\{\int_{0}^{T}Y(t)\left[\int_{0}^{t}\left(A_1(t,s)X(s)+A_2(t,s)X(s-\delta)
  \right)ds\right]   dt\right\}\\
&~~~-\mathbb{E}\left[\int_{0}^{T} Y(t)\left( \int_0^t A_3(t,s)X(s)dW(s) \right) dt\right]\\
&=\mathbb{E}\left[\int_{0}^{T}X(t)Y(t)dt \right]
-\mathbb{E}\left[\int_{0}^{T}\int_{t}^{T}Y(s)\left(A_1(s,t)X(t)+A_2(s,t)X(t-\delta)
  \right)dsdt\right]\\
&~~-\mathbb{E}\left[\int_{0}^{T} \left(\mathbb{E}[Y(t)]+\int_0^t Z(t,s) dW(s)\right) \left( \int_0^t A_3(t,s)X(s)
 dW(s) \right) dt\right].
\end{aligned}
\end{equation}
With the help of Fubini's theorem, we derive
\begin{equation}\label{36}
\begin{aligned}
&~~~\mathbb{E}\left[\int_{0}^{T}\int_{t}^{T}Y(s)\left(A_1(s,t)X(t)+A_2(s,t)X(t-\delta)
  \right)dsdt\right]\\
&=\mathbb{E}\left[\int_{0}^{T} X(t)\left(\int_{t}^{T} A_1(s,t)Y(s)ds\right)dt\right]
+\mathbb{E}\left[\int_{0}^{T}X(t-\delta)\left(\int_{t}^{T} A_2(s,t) Y(s)ds  \right) dt\right]\\
&=\mathbb{E}\left[\int_{0}^{T} X(t)\left(\int_{t}^{T} A_1(s,t)Y(s)ds\right)dt\right]\\
&~~~+\mathbb{E}\left[\int_{-\delta}^{T-\delta}X(t)\left(\int_{t}^{T-\delta}A_2(s+\delta,t+\delta) Y(s+\delta)1_{[0,T-\delta]^2}(s,t)ds\right)dt \right]\\
&=\mathbb{E}\left[\int_{0}^{T} X(t)\left(\int_{t}^{T} A_1(s,t)Y(s)ds\right)dt\right]\\
&~~~+\mathbb{E}\left[\int_{0}^{T}X(t)\left(\int_{t}^{T}\mathbb{E}\left[A_2(s+\delta,t+\delta) Y(s+\delta)1_{[0,T-\delta]^2}(s,t)\big|\mathcal{F}_s\right]ds\right)dt \right].
\end{aligned}
\end{equation}
From the property of the It\^{o} integral, we have the following:
\begin{equation}\label{37}
\begin{aligned}
&~~~\mathbb{E}\left[\int_{0}^{T} \left(\mathbb{E}[Y(t)]+\int_0^t Z(t,s) dW(s)\right) \left( \int_0^t A_3(t,s)X(s)
 dW(s) \right) dt\right]\\
&=\mathbb{E}\left[\int_{0}^{T} \int_0^t  A_3(t,s)X(s)Z(t,s)dsdt\right]
=\mathbb{E}\left[\int_{0}^{T} \int_t^T  A_3(s,t)X(t)Z(s,t) dsdt\right]\\
&=\mathbb{E}\left[\int_{0}^{T} X(t) \left(\int_t^T A_3(s,t)Z(s,t) ds\right) dt\right]
\end{aligned}
\end{equation}
Substituting (\ref{36})-(\ref{37}) into (\ref{35}), it then follows from (\ref{33}) that
\begin{equation*}
\begin{aligned}
&~~~\mathbb{E}\left[\int_{0}^{T}\phi(t)Y(t)dt \right]\\
&=\mathbb{E}\left\{\int_{0}^{T}X(t)\left[Y(t)-\int_{t}^{T}\left( A_1(s,t)Y(s)+\mathbb{E}\left[A_2(s+\delta,t+\delta)Y(s+\delta)1_{[0,T-\delta]^2}(s,t)\big|\mathcal{F}_s\right]\right.\right.\right.\\
&~~~\left.\left.\left. +A_3(s,t)Z(s,t)\right) ds \right]dt\right\}\\
&=\mathbb{E}\left[\int_{0}^{T}X(t)\left(\varphi (t)- \int_{t}^{T} Z(t,s) dW(s) \right)dt\right]\\
&=\mathbb{E}\left[\int_{0}^{T}\varphi(t)X(t)dt \right],
\end{aligned}
\end{equation*}
which is the required (\ref{34}).
\end{proof}

For each optimal triple $(X^* (\cdot), u_1^* (\cdot), u_2^* (\cdot))$, we then investigate the Pontryagin type maximum principle for Problem \ref{prob1}. To simplify, we denote the partial derivatives of $b(\cdot)$, $\sigma(\cdot)$ and $h_i(\cdot)~(i=1,2)$ by the following notations:
\begin{align*}
&\sigma^*_{\tilde{l}} (t,s)=\sigma_{\tilde{l}} (t, s, X^*(s),
u_1^* (s),  u_2^* (s)),~~
\sigma^*_{\tilde{l}} (s,t)=\sigma_{\tilde{l}} (s, t, X^*(t),
u_1^* (t),  u_2^* (t)),\\
&b^*_l (t,s)=b_l (t, s, X^*(s), X^*(s-\delta),
u_1^* (s), u_1^* (s-\delta_1), u_2^* (s), u_2^* (s-\delta_2)),\\
&b^*_l (s,t)=b_l (s, t, X^*(t), X^*(t-\delta),
u_1^* (t), u_1^* (t-\delta_1), u_2^* (t), u_2^* (t-\delta_2)),\\
&h^*_{il} (t)=h_{il} (t, X^*(t),X^*(t-\delta),
u_1^* (t), u_1^* (t-\delta_1), u_2^* (t), u_2^* (t-\delta_2)),
\end{align*}
where $\tilde{l}=x,u_1,u_2$, $l=x,x_\delta,u_1,u_{1\delta_1},u_2,u_{2\delta_2}$,
and $t,s\in [0,T]$. Since $h_i(\cdot)$ is defined on $[0, T]$, the values of $h_i (\cdot)$ are equal to zero on $[-\delta,0)$ and $(T,T+\delta]$, $i=1,2$. Similarly, the coefficient $b(\cdot,\cdot)$ is defined on $[0, T]^2$, and then $b(t,s)=0$ for any $(t,s)\notin[0, T]^2$.

\begin{theorem}\label{th5}
If Assumptions $(A5)$-$(A6)$ hold and $(X^* (\cdot), u_1^* (\cdot), u_2^* (\cdot))$ is an optimal triple of Problem \ref{prob1}, then there exists a unique adapted M-solution $(Y_i(\cdot),Y_{0_i} (\cdot); Z_i(\cdot,\cdot),Z_{0_i}(\cdot,\cdot))~(i=1,2)$ satisfying the following ABSVIE:
\begin{align}\label{38}
\left\{\begin{array}{lcl} Y_i (t) = h_{ix}^* (t) +\mathbb{E}\left[h_{ix_{\delta}}^* (t+\delta)1_{[0,T-\delta]}(t)\big| \mathcal{F}_t\right]
+\int_t^T \left( b_x^* (s,t)Y_i(s)+\mathbb{E}\left[b_{x_\delta}^* (s+\delta,t+\delta)\right.
\right.\\
~~~~~~~~\left.\left.\cdot Y_i(s+\delta)1_{[0,T-\delta]^2}(s,t)\big| \mathcal{F}_s\right]+\sigma_x^* (s,t)Z_i (s,t) \right) ds
-\int_t^T Z_i(t,s) dW(s),~~t\in [0, T],\\
Y_i (t)=0,~~t\in [T, T+K],\\
Z_i (t,s)=0,~~(t,s)\in [0, T+K]^2 \backslash [0, T]^2,\\
Y_{0_i} (t) = \int_t^T \left(b_{u_i}^* (s,t)Y_i(s)+\mathbb{E}\left[b_{u_{i\delta_i}}^* (s+\delta_i,t+\delta_i)Y_i(s+\delta_i)1_{[0,T-\delta_i]^2}(s,t)\big| \mathcal{F}_s\right]\right.
\\
~~~~~~~~~\left.+\sigma_{u_i}^* (s,t)Z_i (s,t)\right) ds
-\int_t^T Z_{0_i} (t,s) dW(s),~~t\in [0, T],
\end{array}
\right.
\end{align}
such that
\begin{equation}\label{39}
\begin{aligned}
&\left(Y_{0_1} (t)+h_{1u_1}^* (t)+\mathbb{E}\left[ h_{1u_{1\delta_1}}^* (t+\delta_1)1_{[0,T-\delta_1]}(t)\big| \mathcal{F}_t\right]\right)
(u_1-u_1^* (t))\geq 0,\\
&\left(Y_{0_2} (t)+h_{2u_2}^* (t)+\mathbb{E}\left[ h_{2u_{2\delta_2}}^* (t+\delta_2)1_{[0,T-\delta_2]}(t)\big| \mathcal{F}_t\right]\right)
(u_2-u_2^* (t))\geq 0,
\end{aligned}
\end{equation}
for any $(u_1,u_2)\in \mathbf{U}_1\times\mathbf{U}_2$,~$t\in [0,T]$,~a.s.
\end{theorem}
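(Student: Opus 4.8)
The plan is to follow the classical convex-variation route for a Pontryagin-type maximum principle, now adapted to the Volterra and delay structure through the duality principle of Theorem \ref{th4}. \emph{Well-posedness of the adjoint system.} Under (A5) the coefficients $b_x^*,b_{x_\delta}^*,\sigma_x^*$ are bounded and $\mathbb{F}$-adapted, hence lie in $L^\infty(0,T;L_{\mathbb{F}}^\infty(0,T;\mathbb{R}))$; under (A6) together with $X^*(\cdot)\in L_{\mathbb{F}}^2(-\delta,T;\mathbb{R})$ and $u_i^*(\cdot)\in\mathcal{A}_i$, the free term $h_{ix}^*(t)+\mathbb{E}[h_{ix_\delta}^*(t+\delta)\,|\,\mathcal{F}_t]$ (with the convention $h_{ix_\delta}^*\equiv 0$ on $(T,T+K]$) lies in $L_{\mathcal{F}_T}^2(0,T;\mathbb{R})$. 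Thus the $(Y_i,Z_i)$-equation of (\ref{38}) is a linear ABSVIE of the form (\ref{33}) satisfying (A1)--(A2), so Theorem \ref{th1} yields a unique adapted M-solution $(Y_i,Z_i)$. Since the generator of the $Y_{0_i}$-equation does not involve $(Y_{0_i},Z_{0_i})$, once $(Y_i,Z_i)$ is fixed one has $Y_{0_i}(t)=\mathbb{E}[\int_t^T\Phi_i(s,t)\,ds\,|\,\mathcal{F}_t]$ with $\Phi_i(s,t):=b_{u_i}^*(s,t)Y_i(s)+b_{u_{i\delta_i}}^*(s+\delta_i,t+\delta_i)Y_i(s+\delta_i)+\sigma_{u_i}^*(s,t)Z_i(s,t)$, and $Z_{0_i}(t,\cdot)$ is read off from the martingale representation theorem; this fixes the adjoint uniquely.

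\emph{Convex perturbation and first variation.} Freeze $u_2^*(\cdot)$, pick any $u_1(\cdot)\in\mathcal{A}_1$, put $\Delta u_1:=u_1-u_1^*$ and $u_1^\varepsilon:=u_1^*+\varepsilon\Delta u_1\in\mathcal{A}_1$ for $\varepsilon\in[0,1]$ (admissible by convexity of $\mathbf{U}_1$). A standard Volterra--Gronwall estimate under (A5) gives $\varepsilon^{-1}(X^{u_1^\varepsilon,u_2^*}-X^*)\to X_1$ in $L_{\mathbb{F}}^2(-\delta,T;\mathbb{R})$ as $\varepsilon\downarrow0$, where $X_1$ solves the linear SDVIE (\ref{32}) with $A_1=b_x^*$, $A_2=b_{x_\delta}^*$, $A_3=\sigma_x^*$ and free term $\phi_1(t):=\int_0^t\big(b_{u_1}^*(t,s)\Delta u_1(s)+b_{u_{1\delta_1}}^*(t,s)\Delta u_1(s-\delta_1)\big)ds+\int_0^t\sigma_{u_1}^*(t,s)\Delta u_1(s)\,dW(s)\in L_{\mathbb{F}}^2(0,T;\mathbb{R})$. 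Because $\varepsilon\mapsto J_1(u_1^\varepsilon,u_2^*)$ has a minimum at $\varepsilon=0$, differentiating at $\varepsilon=0^+$ (using (A6) and dominated convergence) yields
\[
0\le\mathbb{E}\!\int_0^T\!\big(h_{1x}^*(t)X_1(t)+h_{1x_\delta}^*(t)X_1(t-\delta)+h_{1u_1}^*(t)\Delta u_1(t)+h_{1u_{1\delta_1}}^*(t)\Delta u_1(t-\delta_1)\big)\,dt.
\]
Shifting the two delay terms ($t\mapsto t+\delta$, resp.\ $t\mapsto t+\delta_1$; the negative-time parts drop out since $X_1$ and $\Delta u_1$ vanish on $[-\delta,0]$, resp.\ $[-\delta_1,0]$) and invoking the tower property with the $\mathcal{F}_t$-measurability of $X_1(t)$ and $\Delta u_1(t)$ turns this into $0\le\mathbb{E}\int_0^T\varphi(t)X_1(t)\,dt+\mathbb{E}\int_0^T\bar h_{1u_1}(t)\Delta u_1(t)\,dt$, where $\varphi(t):=h_{1x}^*(t)+\mathbb{E}[h_{1x_\delta}^*(t+\delta)|\mathcal{F}_t]$ and $\bar h_{1u_1}(t):=h_{1u_1}^*(t)+\mathbb{E}[h_{1u_{1\delta_1}}^*(t+\delta_1)|\mathcal{F}_t]$.

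\emph{Duality.} Apply Theorem \ref{th4} to the SDVIE (\ref{32}) with free term $\phi_1$ paired with the linear ABSVIE (\ref{33}) with free term $\varphi$ --- which is precisely the $(Y_1,Z_1)$-equation of (\ref{38}). This gives $\mathbb{E}\int_0^T\varphi(t)X_1(t)\,dt=\mathbb{E}\int_0^T\phi_1(t)Y_1(t)\,dt$. Expanding the right-hand side: the stochastic-integral part of $\phi_1$ is handled by the M-solution identity $Y_1(t)=\mathbb{E}[Y_1(t)]+\int_0^tZ_1(t,s)\,dW(s)$ and It\^o's isometry, producing the term $\mathbb{E}\int_0^T\!\int_0^t\sigma_{u_1}^*(t,s)Z_1(t,s)\Delta u_1(s)\,ds\,dt$; the two drift contributions are recast by Fubini ($\int_0^Tdt\int_0^tds=\int_0^Tds\int_s^Tdt$), and in the $b_{u_{1\delta_1}}^*$-term a further shift $s\mapsto s+\delta_1$ followed by $t\mapsto t+\delta_1$ (using $Y_1\equiv0$ on $[T,T+K]$ to restore the range) assembles everything into $\mathbb{E}\int_0^T\Delta u_1(s)\big(\int_s^T\Phi_1(t,s)\,dt\big)ds$. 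Since $\Delta u_1(s)$ is $\mathcal{F}_s$-measurable and, by the $Y_{0_1}$-equation of (\ref{38}), $\mathbb{E}[\int_s^T\Phi_1(t,s)\,dt\,|\,\mathcal{F}_s]=Y_{0_1}(s)$, this equals $\mathbb{E}\int_0^TY_{0_1}(t)\Delta u_1(t)\,dt$. Combining with the previous step,
\[
0\le\mathbb{E}\!\int_0^T\!\big(Y_{0_1}(t)+h_{1u_1}^*(t)+\mathbb{E}[h_{1u_{1\delta_1}}^*(t+\delta_1)\,|\,\mathcal{F}_t]\big)\big(u_1(t)-u_1^*(t)\big)\,dt\qquad\mbox{for every }u_1(\cdot)\in\mathcal{A}_1.
\]

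\emph{Localization, symmetry, and the main difficulty.} To descend to the pointwise statement I test with $u_1=u_1^*+\mathbf{1}_B(v-u_1^*)$ for fixed $v\in\mathbf{U}_1$ and an arbitrary progressively measurable set $B$ (admissible by convexity); choosing $B$ as the set where the integrand above is strictly negative forces it to vanish $dt\otimes d\mathbb{P}$-a.e., and letting $v$ run over a countable dense subset of $\mathbf{U}_1$ together with continuity in $v$ gives the first inequality in (\ref{39}). The second follows from the symmetric convex perturbation of $u_2$ with $u_1^*$ frozen, using the $(Y_2,Y_{0_2};Z_2,Z_{0_2})$-component of (\ref{38}). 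I expect the main obstacle to be the bookkeeping in the duality step: keeping the Volterra double integrals, the distinct delays $\delta$ and $\delta_i$, the successive time-shifts, and the ``extension by zero beyond $T$'' conventions mutually consistent, and in particular recognizing the reassembled kernel as exactly the generator of the $Y_{0_i}$-equation so that the second adjoint $Y_{0_i}$ genuinely emerges. The convergence of the difference quotient to $X_1$ is also somewhat delicate because SDVIEs carry no martingale structure, but it is covered by the usual Volterra-type Gronwall argument under (A5).
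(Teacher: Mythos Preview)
Your proposal is correct and follows essentially the same route as the paper: convex perturbation of one control with the other frozen, identification of the first variation $X_1$ with the linear SDVIE having free term $\phi_1$, the time-shift of the delay terms in the differentiated cost, the duality identity of Theorem~\ref{th4} to convert $\mathbb{E}\int_0^T\varphi(t)X_1(t)\,dt$ into $\mathbb{E}\int_0^T\phi_1(t)Y_1(t)\,dt$, and then the Fubini/M-solution/time-shift bookkeeping that reassembles the latter into $\mathbb{E}\int_0^T Y_{0_1}(t)\Delta u_1(t)\,dt$. Your write-up is in fact more explicit than the paper's on the well-posedness of the adjoint system and on the localization step from the integrated inequality to the pointwise condition~(\ref{39}).
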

\begin{proof}
Let $(u_1(\cdot), u_2(\cdot))\in \mathcal{A}_1\times\mathcal{A}_2$.
For all $\varepsilon\in [0,1]$, we define
\begin{equation*}
\begin{aligned}
u_1^\varepsilon (\cdot):=u_1^* (\cdot)+\varepsilon [u_1(\cdot)-u_1^* (\cdot)],~~~
u_2^\varepsilon (\cdot):=u_2^* (\cdot)+\varepsilon [u_2(\cdot)-u_2^* (\cdot)].
\end{aligned}
\end{equation*}
It should be noticed that both $\mathbf{U}_1$ and $\mathbf{U}_2$ are convex sets,
then $(u_1^\varepsilon (\cdot), u_2^\varepsilon (\cdot))$ is still in $\mathcal{A}_1\times\mathcal{A}_2$.
Suppose that $X_1^\varepsilon (\cdot)\in L_\mathbb{F}^2 (-\delta,T; \mathbb{R}) \left( \mbox{resp.}~ X_2^\varepsilon (\cdot)\in L_\mathbb{F}^2 (-\delta,T; \mathbb{R})\right)$ is the adapted
solution to SDVIE (\ref{29}) along with the controls
$(u_1^\varepsilon (\cdot), u_2^* (\cdot)) \left( \mbox{resp.}~ (u_1^*(\cdot), u_2^\varepsilon (\cdot)) \right)$.
Denote by
\begin{align}\label{40}
\xi_i^\varepsilon (t):=\frac{X_i^\varepsilon (t)-X^* (t)}{\varepsilon},~~t\in [-\delta,T],~~i=1,2,
\end{align}
and we can let $\varepsilon\mapsto 0$ in (\ref{40}) to obtain
$\xi_i^\varepsilon (\cdot)\mapsto \xi_i (\cdot)\in L_{\mathbb{F}}^{2} (-\delta,T; \mathbb{R})$ such that
\begin{align*}
\left\{\begin{array}{lcl}
\xi_i(t)=\int_{0}^{t} \left[ b_x^* (t, s)\xi_i(s)+b_{x_\delta}^* (t, s)\xi_i(s-\delta)+b_{u_i}^* (t, s)(u_i(s)-u_i^* (s))
\right.\\
~~~~~~~~~\left.+b_{u_{i\delta_i}}^* (t, s)(u_i(s-\delta_i)-u_i^*(s-\delta_i))\right] ds\\
~~~~~~~~~+\int_{0}^{t}\left[\sigma_x^* (t, s)\xi_i(s)+\sigma_{u_i}^* (t, s)(u_i(s)-u_i^* (s))
\right]   dW(s)
\\
~~~~=\psi_i^* (t)
+\int_{0}^{t} \left[ b_x^* (t, s)\xi_i(s)+b_{x_\delta}^* (t, s)\xi_i(s-\delta)\right] ds
+\int_{0}^{t} \left[\sigma_x^* (t, s)\xi_i(s) \right]  dW(s),
~~t\in [0, T],\\
\xi_i (t)=0,~~t\in [-\delta, 0],\\
u_i (t)-u_i^* (t)=0,~~t\in [-\delta_i, 0],~~i=1,2,
\end{array}
\right.
\end{align*}
where
\begin{equation*}
\begin{aligned}
\psi_i^* (t)&=\int_{0}^{t} \left[b_{u_i}^* (t, s)(u_i(s)-u_i^* (s))+b_{u_{i\delta_i}}^* (t, s)(u_i(s-\delta_i)-u_i^*(s-\delta_i)) \right] ds\\
&~~~+\int_{0}^{t}\left[\sigma_{u_i}^* (t, s)(u_i(s)-u_i^* (s))  \right]  dW(s),~~t\in [0, T].
\end{aligned}
\end{equation*}
Based on Theorem \ref{th1}, let us assume that $(Y_i(\cdot),Y_{0_i} (\cdot); Z_i(\cdot,\cdot),Z_{0_i}(\cdot,\cdot))~(i=1,2)$ is the unique adapted M-solution to ABSVIE (\ref{38}). For $i=1$, according to Theorem \ref{th4}, and by (\ref{2}) and (\ref{30}) we have that
\begin{equation*}
\begin{aligned}
&0\leq \frac{J_1(u_1^\varepsilon (\cdot),u_2^* (\cdot))-J_1(u_1^* (\cdot),u_2^* (\cdot))}{\varepsilon}\\
&~\rightarrow \mathbb{E}\left\{\int_0^T \left[h_{1x}^* (t)\xi_1(t)+h_{1x_\delta}^* (t)\xi_1(t-\delta)
+h_{1u_1}^* (t)(u_1(t)-u_1^* (t))\right.\right.\\
&~~~~\left.\left.+h_{1u_{1\delta_1}}^* (t)(u_1(t-\delta_1)-u_1^* (t-\delta_1)) \right]dt\right\}\\
&~=\mathbb{E}\left[\int_0^T h_{1x}^* (t)\xi_1 (t)dt+\int_{-\delta}^{T-\delta} h_{1x_\delta}^* (t+\delta)\xi_1(t)1_{[0,T-\delta]}(t)dt
+\int_0^T h_{1u_1}^* (t)(u_1(t)-u_1^* (t)) dt\right.\\
&~~~~\left.+\int_{-\delta_1}^{T-\delta_1} h_{1u_{1\delta_1}}^* (t+\delta_1)(u_1(t)-u_1^* (t))1_{[0,T-\delta_1]}(t) dt \right]\\
&~=\mathbb{E}\left[\int_0^T \left( h_{1x}^* (t)+\mathbb{E}\left[h_{ix_{\delta}}^* (t+\delta)1_{[0,T-\delta]}(t)\big| \mathcal{F}_t\right]  \right)\xi_1 (t)dt\right]\\
&~~~~
+\mathbb{E}\left[\int_0^T \left( h_{1u_1}^* (t)+\mathbb{E}\left[ h_{1u_{1\delta_1}}^* (t+\delta_1)1_{[0,T-\delta_1]}(t)\big| \mathcal{F}_t\right]  \right)(u_1(t)-u_1^* (t)) dt\right]\\
&~=\mathbb{E}\left[\int_0^T \psi_1^* (t)Y_1(t)dt\right]
+\mathbb{E}\left[\int_0^T \left( h_{1u_1}^* (t)+ \mathbb{E}\left[h_{1u_{1\delta_1}}^* (t+\delta_1)1_{[0,T-\delta_1]}(t)\big| \mathcal{F}_t\right] \right)(u_1(t)-u_1^* (t)) dt\right]\\
&~=\mathbb{E}\left\{\int_0^T \left[\int_{0}^{t} \left(b_{u_1}^* (t, s)(u_1(s)-u_1^* (s))
+b_{u_{1\delta_1}}^* (t, s)(u_1 (s-\delta_1)-u_1^* (s-\delta_1)) \right) ds\right] Y_1(t) dt\right\}\\
&~~~~+\mathbb{E}\left\{\int_0^T \left[\int_{0}^{t} \left(\sigma_{u_1}^* (t, s)(u_1(s)-u_1^* (s))
 \right) dW(s)\right]Y_1(t) dt\right\}\\
&~~~~ +\mathbb{E}\left[\int_0^T \left( h_{1u_1}^* (t)+\mathbb{E}\left[ h_{1u_{1\delta_1}}^* (t+\delta_1)1_{[0,T-\delta_1]}(t)\big| \mathcal{F}_t\right]  \right)(u_1(t)-u_1^* (t)) dt\right]\\
&~=\mathbb{E}\left[\int_0^T \left(\int_{t}^{T} b_{u_1}^* (s, t)Y_1(s)ds\right)(u_1(t)-u_1^* (t))dt\right]
+\mathbb{E}\left[\int_0^T \left(\int_{t}^{T}b_{u_{1\delta_1}}^* (s, t)Y_1(s)ds\right)\right.\\
&~~~~\left.\cdot(u_1 (t-\delta_1)-u_1^* (t-\delta_1))  dt\right]\\
&~~~~+\mathbb{E}\left\{\int_0^T \left[\int_{0}^{t} \left(\sigma_{u_1}^* (t, s)(u_1(s)-u_1^* (s))
 \right) dW(s)\right] \left(\mathbb{E}[Y_1(t)]+\int_0^t Z_1 (t,s) dW(s)\right)dt\right\}\\
&~~~~ +\mathbb{E}\left[\int_0^T \left( h_{1u_1}^* (t)+\mathbb{E}\left[ h_{1u_{1\delta_1}}^* (t+\delta_1)1_{[0,T-\delta_1]}(t)\big| \mathcal{F}_t\right]  \right)(u_1(t)-u_1^* (t)) dt\right]
\end{aligned}
\end{equation*}
\begin{equation*}
\begin{aligned}
&~=\mathbb{E}\left\{\int_0^T \left[\int_{t}^{T} \left(b_{u_1}^* (s, t)Y_1(s)+\sigma_{u_1}^* (s,t)Z_1 (s,t)\right) ds\right](u_1(t)-u_1^* (t))dt\right\}\\
&~~~~+\mathbb{E}\left\{\int_0^T \left[\int_{t}^{T}\left(b_{u_{1\delta_1}}^* (s, t)Y_1(s)
\right) ds\right](u_1 (t-\delta_1)-u_1^* (t-\delta_1))  dt\right\}\\
&~~~~+\mathbb{E}\left[\int_0^T \left( h_{1u_1}^* (t)+\mathbb{E}\left[ h_{1u_{1\delta_1}}^* (t+\delta_1)1_{[0,T-\delta_1]}(t)\big| \mathcal{F}_t\right] \right)(u_1(t)-u_1^* (t)) dt\right] \\
&~=\mathbb{E}\left\{\int_0^T \left[\int_{t}^{T} \left(b_{u_1}^* (s, t)Y_1(s)+\mathbb{E}\left[b_{u_{1\delta_1}}^* (s+\delta_1, t+\delta_1)Y_1(s+\delta_1)1_{[0,T-\delta_1]^2}(s,t)\big| \mathcal{F}_s\right]
\right.\right.\right.\\
&~~~\left.\left.\left. +\sigma_{u_1}^* (s,t)Z_1 (s,t)\right) ds\right](u_1(t)-u_1^* (t))dt\right\}
+\mathbb{E}\left[\int_0^T \left( h_{1u_1}^* (t)\right.\right.\\
&~~~\left.\left.+ \mathbb{E}\left[h_{1u_{1\delta_1}}^* (t+\delta_1)1_{[0,T-\delta_1]}(t) \big| \mathcal{F}_t\right] \right)(u_1(t)-u_1^* (t)) dt\right]\\
&~=\mathbb{E}\left[\int_0^T \left(Y_{0_1}(t)+\int_t^T Z_{0_1}(t,s)dW(s)+h_{1u_1}^* (t)+ \mathbb{E}\left[h_{1u_{1\delta_1}}^* (t+\delta_1)1_{[0,T-\delta_1]}(t)\big| \mathcal{F}_t \right ] \right)\right.\\
&~~~\left. \cdot(u_1(t)-u_1^* (t)) dt\right]\\
&~=\mathbb{E}\left[\int_0^T \left(Y_{0_1}(t)+h_{1u_1}^* (t)+\mathbb{E}\left [ h_{1u_{1\delta_1}}^* (t+\delta_1)1_{[0,T-\delta_1]}(t)\big| \mathcal{F}_t \right ] \right) (u_1(t)-u_1^* (t)) dt\right].
\end{aligned}
\end{equation*}
Consequently, thanks to the arbitrariness of $u_1(\cdot)\in \mathcal{A}_1$, it can be used to obtain (\ref{39}).
Similarly, we can prove the case of $i=2$ in the same way.
\end{proof}

According to Theorem \ref{th5}, the Hamiltonian function $H_i$ is defined by
\begin{align*}
&~~~~H_i(t, X^* (t), X^* (t-\delta), u_1^* (t), u_1^* (t-\delta_1), u_2^* (t), u_2^* (t-\delta_2),
Y_i (\cdot), Z_i(\cdot, t); u_i)\\
&:=-\left(Y_{0_i} (t)+h_{iu_i}^* (t)+\mathbb{E}\left[ h_{iu_{i\delta_i}}^* (t+\delta_i)1_{[0,T-\delta_i]}(t)\big| \mathcal{F}_t\right]\right) u_i\\
&=-\mathbb{E}\left[h_{iu_i}^* (t)+ h_{iu_{i\delta_i}}^* (t+\delta_i)1_{[0,T-\delta_i]}(t)
+\int_t^T \left(b_{u_i}^* (s,t)Y_i(s)
\right.\right.\\
&~~~\left.\left.+b_{u_{i\delta_i}}^* (s+\delta_i,t+\delta_i)Y_i(s+\delta_i)1_{[0,T-\delta_i]^2}(s,t)+\sigma_{u_i}^* (s,t)Z_i (s,t)  \right) ds \Big| \mathcal{F}_t\right] u_i,~~i=1,2,
\end{align*}
and then the maximum condition (\ref{39}) implies that
\begin{equation*}
\begin{aligned}
&~~~H_1(t, X^* (t), X^* (t-\delta), u_1^* (t), u_1^* (t-\delta_1), u_2^* (t), u_2^* (t-\delta_2),
Y_1 (\cdot), Z_1(\cdot, t); u_1^* (t))\\
&=\max\limits_{u_1\in\mathcal{A}_1} H_1(t, X^* (t), X^* (t-\delta), u_1^* (t), u_1^* (t-\delta_1), u_2^* (t), u_2^* (t-\delta_2),
Y_1 (\cdot), Z_1(\cdot, t); u_1),\\
&~~~H_2(t, X^* (t), X^* (t-\delta), u_1^* (t), u_1^* (t-\delta_1), u_2^* (t), u_2^* (t-\delta_2),
Y_2 (\cdot), Z_2(\cdot, t); u_2^* (t))\\
&=\max\limits_{u_2\in\mathcal{A}_2} H_2(t, X^* (t), X^* (t-\delta), u_1^* (t), u_1^* (t-\delta_1), u_2^* (t), u_2^* (t-\delta_2),
Y_2 (\cdot), Z_2(\cdot, t); u_2),
\end{aligned}
\end{equation*}
where $(Y_i (\cdot), Z_i (\cdot,\cdot))$ satisfies the following adjoint equation with respect to the optimal triple $(X^* (\cdot), u_1^* (\cdot), u_2^* (\cdot))$:
\begin{align*}
\left\{\begin{array}{lcl} Y_i (t) = h_{ix}^* (t) + \mathbb{E}\left[h_{ix_{\delta}}^* (t+\delta)1_{[0,T-\delta]}(t)\big| \mathcal{F}_t\right]
+\int_t^T \left( b_x^* (s,t)Y_i(s)+\mathbb{E}\left[b_{x_\delta}^* (s+\delta,t+\delta)
\right.\right.\\
~~~~~~~~\left. \left.\cdot Y_i(s+\delta)1_{[0,T-\delta]^2}(s,t)\big| \mathcal{F}_s\right]+\sigma_x^* (s,t)Z_i (s,t)\right) ds
-\int_t^T Z_i(t,s) dW(s),~~t\in [0, T],\\
Y_i (t)=0,~~t\in [T, T+K],\\
Z_i (t,s)=0,~~(t,s)\in [0, T+K]^2 \backslash [0, T]^2,~~i=1,2.
\end{array}
\right.
\end{align*}

\begin{remark}
Under Assumptions $(A5)$-$(A6)$, Theorem \ref{th5} establishes Pontryagin's type maximum principle for
nonzero-sum differential game systems of SDVIE (\ref{29}) for the first time.
However, it is found that if SDVIE (\ref{29}) includes the average delay, i.e., $\int_{-\delta}^{0} e^{\lambda\theta} X(s+\theta) d\theta$,
then the duality of (\ref{29}) is not a trivial extension of the classical results.
As far as we know, this problem is still open at present, and expected to be explored
in our future publications. Motivated by \cite{Meng2025,Nie2022},
we also expect to establish the maximum principle for
nonzero-sum differential game systems of SDVIEs with non-convex
control sets.
\end{remark}

In what follows, we apply the maximum principle (i.e., Theorem \ref{th5}) to study a linear-quadratic
nonzero-sum differential game problem of SDVIEs.

\begin{example}\label{ex1}
For $i=1,2$, suppose that the control domain is denoted by $\mathbf{U}_i:=\mathbb{R}$.
We consider a controlled
state process $X(\cdot)$ given by the following linear SDVIE:
\begin{align}\label{52}
\left\{\begin{array}{lcl} X(t) = \phi (t)+\int_{0}^{t} \left[a_1(t, s) X(s)+a_2(t,s) X(s-\delta)
+b_1(t,s)u_1 (s)+b_2(t,s) u_2(s)\right.\\
\ \ \ \ \ \ \ \ \ \left.+c_1(t,s) u_1(s-\delta_1)+c_2 (t,s) u_2 (s-\delta_2)\right]ds\\
\ \ \ \ \ \ \  \  \ +\int_{0}^{t}  \left[\tilde{a}_1( t, s) X(s)
+\tilde{b}_1(t,s)u_1 (s)+\tilde{b}_2(t,s)u_2(s)\right] dW(s),\ \
  t\in [0, T],\\
X(t) = \phi (t), \ \ t\in [-\delta, 0],\\
u_1(t) = \vartheta_1 (t), \ \ t\in [-\delta_1, 0],\\
u_2(t) = \vartheta_2 (t), \ \ t\in [-\delta_2, 0],
\end{array}
\right.
\end{align}
where $\phi(\cdot)\in L_{\mathbb{F}}^{2} (-\delta,T; \mathbb{R})$,
$\vartheta_i (\cdot)\in L_{\mathbb{F}}^{2} (-\delta_i,0; \mathbb{R})$,
$a_i(\cdot,\cdot),b_i(\cdot,\cdot),c_i(\cdot,\cdot),\tilde{a}_1(\cdot,\cdot),
\tilde{b}_i(\cdot,\cdot): \Delta^c \mapsto \mathbb{R}$
are given bounded measurable functions, $u_i(\cdot)$ is the control process
of Player $i$, and the admissible control set for Player $i$ is
defined by $\mathcal{A}_i:=L_{\mathbb{F}}^{2}(0,T; \mathbb{R})$, $i=1,2$.
Moreover, the constants $\delta,\delta_1,\delta_2>0$ are given finite time delays
for processes $X(\cdot),u_1(\cdot),u_2(\cdot)$, respectively,
and there exists a constant $K\geq0$ such that $0<\delta,\delta_1,\delta_2\leq K$.

Define the associated performance functional of the two players by
\begin{align}\label{53}
J_i(u_1(\cdot),u_2(\cdot))=\frac{1}{2}\mathbb{E}\left[\int_{0}^{T}
\left(q_i(t)X^2 (t)+\tilde{q}_i (t) X^2 (t-\delta)
+r_i (t)u_i^2 (t)+\tilde{r}_i (t) u_i^2 (t-\delta_i)\right)dt\right],
\end{align}
where $q_i(\cdot)$, $\tilde{q}_i (\cdot)$ are nonnegative bounded measurable functions, and
$r_i(\cdot)$, $\tilde{r}_i (\cdot)$ are positive bounded measurable functions, $i=1,2$.

It is not hard to see that for any given $\phi(\cdot)\in L_{\mathbb{F}}^{2} (-\delta,T; \mathbb{R})$,
$\vartheta_i (\cdot)\in L_{\mathbb{F}}^{2} (-\delta_i,0; \mathbb{R})~(i=1,2)$ and
$(u_1(\cdot),u_2(\cdot))\in \mathcal{A}_1 \times \mathcal{A}_2$,
there exists a unique adapted solution $X(\cdot)\in L_{\mathbb{F}}^{2}(-\delta,T; \mathbb{R})$ to
the linear SDVIE (\ref{52}). Then the linear-quadratic nonzero-sum differential game
problem for the game
system (\ref{52})-(\ref{53}) is shown as follows:
\begin{problem}\label{prob2}
Find a pair of admissible controls
$(u_1^* (\cdot), u_2^* (\cdot))\in \mathcal{A}_1\times\mathcal{A}_2$ such that
\begin{align}\label{55}
\left\{\begin{array}{lcl} J_1(u_1^* (\cdot), u_2^*(\cdot)) = \inf\limits_{u_1(\cdot)\in\mathcal{A}_1}
J_1(u_1(\cdot), u_2^* (\cdot)),\\
J_2(u_1^*(\cdot), u_2^* (\cdot)) = \inf\limits_{u_2(\cdot)\in\mathcal{A}_2} J_2(u_1^* (\cdot), u_2(\cdot)).
\end{array}
\right.
\end{align}
\end{problem}
For $i=1,2$, the Hamiltonian function $H_i$ of Problem \ref{prob2} is given by
\begin{equation*}
\begin{aligned}
&~~~~H_i(t, X^* (t), X^* (t-\delta), u_1^* (t), u_1^* (t-\delta_1), u_2^* (t), u_2^* (t-\delta_2),
Y_i (\cdot), Z_i(\cdot, t); u_i)\\
&:=-\left(Y_{0_i} (t)+r_i(t) u_i^* (t)+\tilde{r}_i(t+\delta_i)  u_i^* (t)1_{[0,T-\delta_i]}(t)\right) u_i,
\end{aligned}
\end{equation*}
and the corresponding adjoint equation has the following form:
\begin{align*}
\left\{\begin{array}{lcl} Y_i (t) = \left(q_{i} (t) +\tilde{q}_{i} (t+\delta)1_{[0,T-\delta]}(t)\right) X^*(t)
+\int_t^T \left( a_1 (s,t)Y_i(s)+a_2 (s+\delta,t+\delta)
\right.\\
~~~~~~~~\left.\cdot\mathbb{E}\left[ Y_i(s+\delta)\big| \mathcal{F}_s\right] 1_{[0,T-\delta]^2}(s,t)+\tilde{a}_1 (s,t) Z_i (s,t) \right) ds
-\int_t^T Z_i(t,s) dW(s),~~t\in [0, T],\\
Y_i (t)=0,~~t\in [T, T+K],\\
Z_i (t,s)=0,~~(t,s)\in [0, T+K]^2 \backslash [0, T]^2,
\end{array}
\right.
\end{align*}
which admits a unique adapted M-solution $(Y_i (\cdot), Z_i (\cdot,\cdot))\in\mathcal{M}^2 [0,T+K]$
according to Theorem \ref{th1}.

By Theorem \ref{th5}, the Nash equilibrium point
$(u_1^* (\cdot), u_2^* (\cdot))\in \mathcal{A}_1\times\mathcal{A}_2$ of Problem \ref{prob2} is given by
\begin{align}\label{54}
\left\{\begin{array}{lcl}
u_1^*(t)= -\left[r_1(t)+\tilde{r}_1 (t+\delta_1)1_{[0,T-\delta_1]}(t)\right]^{-1} Y_{0_1}(t),\\
u_2^*(t)= -\left[r_2(t)+\tilde{r}_2 (t+\delta_2)1_{[0,T-\delta_2]}(t)\right]^{-1} Y_{0_2}(t),\ \ t\in [0, T],
\end{array}
\right.
\end{align}
where $(Y_{0_i} (\cdot),Z_{0_i}(\cdot,\cdot))$ solves the following equation:
\begin{align*}
Y_{0_i} (t) &= \int_t^T \left(b_{i} (s,t)Y_i(s)+c_i (s+\delta_i,t+\delta_i)\mathbb{E}\left[Y_i(s+\delta_i)\big| \mathcal{F}_s\right] 1_{[0,T-\delta_i]^2}(s,t)
+\tilde{b}_i (s,t)Z_i (s,t)\right) ds\\
&~~~-\int_t^T Z_{0_i} (t,s) dW(s),~~t\in [0, T],~~i=1,2.
\end{align*}

\begin{proposition}\label{pro1}
For the linear-quadratic
nonzero-sum differential game problem of SDVIE
(\ref{52})-(\ref{55}), a Nash equilibrium point
$(u_1^*(\cdot), u_2^*(\cdot))$ is shown by (\ref{54}).
\end{proposition}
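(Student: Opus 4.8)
The plan is to read Proposition \ref{pro1} as the specialization of Theorem \ref{th5} to the linear--quadratic data (\ref{52})--(\ref{53}), followed by a convexity argument that turns the necessary maximum condition into a sufficient one. First I would identify the coefficients: differentiating $b$, $\sigma$ and $h_i$ in (\ref{52})--(\ref{53}) gives $b_x^*(t,s)=a_1(t,s)$, $b_{x_\delta}^*(t,s)=a_2(t,s)$, $b_{u_i}^*(t,s)=b_i(t,s)$, $b_{u_{i\delta_i}}^*(t,s)=c_i(t,s)$, $\sigma_x^*(t,s)=\tilde a_1(t,s)$, $\sigma_{u_i}^*(t,s)=\tilde b_i(t,s)$, together with $h_{ix}^*(t)=q_i(t)X^*(t)$, $h_{ix_\delta}^*(t)=\tilde q_i(t)X^*(t-\delta)$, $h_{iu_i}^*(t)=r_i(t)u_i^*(t)$ and $h_{iu_{i\delta_i}}^*(t)=\tilde r_i(t)u_i^*(t-\delta_i)$. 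Plugging these into the adjoint ABSVIE (\ref{38}) reproduces exactly the adjoint system and the $(Y_{0_i},Z_{0_i})$ equation displayed just before the proposition; since the new generators satisfy Assumptions (A1)--(A2), Theorem \ref{th1} yields a unique adapted M-solution once the optimal state $X^*$ is fixed, and the whole coupled optimal system $(X^*,u_1^*,u_2^*,Y_i,Y_{0_i},Z_i,Z_{0_i})$ is well defined because every equation involved is linear.

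Next I would exploit $\mathbf{U}_i=\mathbb{R}$. The maximum condition (\ref{39}) of Theorem \ref{th5} reads, in the present setting, $\bigl(Y_{0_i}(t)+r_i(t)u_i^*(t)+\tilde r_i(t+\delta_i)u_i^*(t)\bigr)(u_i-u_i^*(t))\ge 0$ for all $u_i\in\mathbb{R}$, a.s., a.e.\ $t$; since $u_i$ ranges over all of $\mathbb{R}$ the bracketed coefficient must vanish, and because $r_i,\tilde r_i>0$ this is equivalent to $u_i^*(t)=-[r_i(t)+\tilde r_i(t+\delta_i)]^{-1}Y_{0_i}(t)$, that is, (\ref{54}). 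Note also that $Y_{0_i}(t)$ is $\mathcal{F}_t$-measurable by the M-solution relation (\ref{2}), so indeed $u_i^*(\cdot)\in\mathcal{A}_i$, and $\mathbb{E}[h_{iu_{i\delta_i}}^*(t+\delta_i)\,|\,\mathcal{F}_t]=\tilde r_i(t+\delta_i)u_i^*(t)$, consistently with the form used above.

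It then remains to verify that (\ref{54}) actually produces a Nash equilibrium, i.e.\ that here the necessary condition is sufficient. Fix $u_2^*$ and take an arbitrary $u_1\in\mathcal{A}_1$; by linearity of (\ref{52}) the perturbation $\varepsilon\mapsto u_1^\varepsilon=u_1^*+\varepsilon(u_1-u_1^*)$ produces $X^{u_1^\varepsilon}-X^*=\varepsilon\,\xi_1$ exactly, with $\xi_1$ the solution of the variational SDVIE appearing in the proof of Theorem \ref{th5}, so $J_1(u_1^\varepsilon,u_2^*)$ is a quadratic polynomial $A+B\varepsilon+C\varepsilon^2$ in $\varepsilon$. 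Here $C=\tfrac12\mathbb{E}\!\int_0^T\!\bigl(q_1(t)\xi_1^2(t)+\tilde q_1(t)\xi_1^2(t-\delta)+r_1(t)(u_1-u_1^*)^2(t)+\tilde r_1(t)(u_1-u_1^*)^2(t-\delta_1)\bigr)dt\ge0$ because $q_i,\tilde q_i\ge0$ and $r_i,\tilde r_i>0$, while $B$ is precisely the first-order term computed in the proof of Theorem \ref{th5}, namely $B=\mathbb{E}\!\int_0^T\!\bigl(Y_{0_1}(t)+r_1(t)u_1^*(t)+\mathbb{E}[\tilde r_1(t+\delta_1)u_1^*(t)\,|\,\mathcal{F}_t]\bigr)(u_1(t)-u_1^*(t))\,dt=0$ by (\ref{54}). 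Taking $\varepsilon=1$ gives $J_1(u_1,u_2^*)-J_1(u_1^*,u_2^*)=B+C\ge0$; the symmetric computation with the two players' roles exchanged gives $J_2(u_1^*,u_2)\ge J_2(u_1^*,u_2^*)$, so (\ref{55}) holds and $(u_1^*,u_2^*)$ given by (\ref{54}) is a Nash equilibrium.

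The step I expect to require the most care is not an estimate but the bookkeeping of the delays in the first-order term $B$: the delayed contributions $\mathbb{E}\!\int_0^T\tilde q_1(t)X^*(t-\delta)\xi_1(t-\delta)\,dt$ and $\mathbb{E}\!\int_0^T\tilde r_1(t)u_1^*(t-\delta_1)(u_1-u_1^*)(t-\delta_1)\,dt$ must be shifted by $t\mapsto t+\delta$ (resp.\ $t\mapsto t+\delta_1$), using $\xi_1\equiv0$ on $[-\delta,0]$ and $u_1-u_1^*\equiv0$ on $[-\delta_1,0]$ together with the conditioning on $\mathcal{F}_t$, so that $B$ matches the left-hand side of (\ref{39}); this is the same type of manipulation already carried out in the duality computation of Theorem \ref{th4} and in the optimality chain of Theorem \ref{th5}, and then the positivity of $C$ does the rest.
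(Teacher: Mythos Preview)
Your proposal is correct and follows the paper's approach: the paper derives (\ref{54}) exactly as you do, by specializing the data of Theorem~\ref{th5} to (\ref{52})--(\ref{53}) and reading off the unique stationary point of the maximum condition (\ref{39}) when $\mathbf{U}_i=\mathbb{R}$. In fact the paper stops there, simply writing ``By Theorem~\ref{th5}, the Nash equilibrium point $(u_1^*(\cdot),u_2^*(\cdot))$ is given by (\ref{54})'', and does not spell out the sufficiency step.

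Your additional convexity argument---exploiting that linearity of (\ref{52}) makes $X^{u_1^\varepsilon}-X^*=\varepsilon\,\xi_1$ exact, so that $J_1(u_1^\varepsilon,u_2^*)$ is a genuine quadratic in~$\varepsilon$ with nonnegative leading coefficient and vanishing linear term at (\ref{54})---is a correct and welcome addition that closes the logical gap between ``(\ref{54}) is the only candidate produced by the necessary condition'' and ``(\ref{54}) is actually a Nash equilibrium''. The paper appears to take this LQ sufficiency for granted. Your remark that the delayed first-order contributions must be shifted via $t\mapsto t+\delta$ and $t\mapsto t+\delta_1$, using $\xi_1\equiv0$ on $[-\delta,0]$ and $u_1-u_1^*\equiv0$ on $[-\delta_1,0]$, is exactly the manipulation carried out in the proof of Theorem~\ref{th5}, so no new difficulty arises there.
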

\end{example}

\section*{Acknowledgements}
The authors would like to thank Professor Fuke Wu for his constructive suggestions
which dramatically improved the presentation of this manuscript.
This work is supported by the Provincial Natural Science Foundation of Hunan
(Grant Nos. 2023JJ40203, 2023JJ30642) and the National Natural Science Foundation of China
(Grant Nos. 12101218, 12371141).








\end{document}